\newtheorem{theorem}{Theorem}[section]
\newtheorem*{theorem*}{Theorem}
\newtheorem{cor}[theorem]{Corollary}
\newtheorem{lem}[theorem]{Lemma}
\newtheorem*{lem*}{Lemma}
\newtheorem{remark}[theorem]{Remark}
\newtheorem{definition}[theorem]{Definition}
\newtheorem{assumption}{Assumption}[section]
\numberwithin{equation}{section}
\newcommand{\RR}{\mathbb{R}} 
\newcommand{\fH}{\mathcal{H}} 
\newcommand{\EE}{\mathbb{E}} 
\newcommand{\PP}{\mathbb{P}} 
\newcommand{\fF}{\mathcal{F}} 
\newcommand{\fT}{\mathcal{T}}
\newcommand{\1}{\mathbbm{1}} 
\newcommand{\fP}[1]{\mathcal{P}_{#1}} 
\newcommand{\fL}{\mathcal{L}} 
\newcommand{\para}{\olessthan}
\newcommand{\reso}{\odot}
\newcommand{\fD}{\mathcal{D}} 
\newcommand{\fI}{\mathcal{I}}
\newcommand{\fK}{\mathcal{K}}
\newcommand{\NN}{\mathbb{N}}
\newcommand{\fS}{\mathcal{S}}
\title{The Compact Support Property of Rough Super Brownian Motion on $\RR^2$}
\author[1]{Ruhong Jin}
\author[2]{Nicolas Perkowski}
\affil[1]{University of Oxford, \href{mailto:ruhong.jin@maths.ox.ac.uk}{ruhong.jin@maths.ox.ac.uk}}
\affil[2]{Institut für Mathematik, Freie Universität Berlin,
\href{mailto:perkowski@math.fu-berlin.de}{perkowski@math.fu-berlin.de}}
\begin{document}
\maketitle

\begin{abstract}
    We discuss the compact support property of the rough super-Brownian motion constructed in \cite{perkowski2021rough} as a scaling limit of a branching random walk in static random environment. The semi-linear equation corresponding to this measure-valued process is the continuous parabolic Anderson model, a singular SPDE in need of renormalization, which prevents the use of classical PDE arguments as in \cite{englander2006compact}. But with the help of an interior estimation method developed in \cite{moinat2020space}, we are able to show that the compact support property also holds for rough super-Brownian motion.
\end{abstract}

\section{Introduction}
The aim of this paper is to give an affirmative answer to the compact support property of rough super Brownian motion introduced in the paper \cite{perkowski2021rough} as universal scaling limit of two-dimensional branching random walks in small random environments (BRWRE).

Similarly to the recent path-wise construction of solutions to singular SPDEs, such as the regularity structure introduced by Hairer \cite{hairer2014theory} and paracontrolled distribution introduced by Gubinelli, Imkeller and Perkowski \cite{gubinelli2015paracontrolled}, the rough super Brownian motion can be viewed as a 'path-wise' version of the limit process of a BRWRE with potential $V^n$ defined by
\[
    V^n(x) = \xi^n_e(x) = \xi^n(x) - c_n, \qquad \{\xi^n(x)\}_{x \in \mathbb{Z}_n^2} \text{ i.i.d}\sim n\Phi
\]
for $c_n \simeq log(n)$, a given random variable $\Phi$ with mean 0 and variance 1 and $\mathbb{Z}_n^2 = \frac{1}{n}\mathbb{Z}^2$.

In the 'path-wise' sense, with a deterministic environment $\xi$ which exhibits similar property with the samples of spatial white noise, the rough super Brownian motion $\mu$ is characterized by the following log-Laplace equation  
\begin{equation}
    \EE[e^{-\langle\mu(t),\varphi_0\rangle}] = e^{-\langle\mu(0),U_t\varphi_0\rangle}
\end{equation}
for non-negative $\varphi_0\in C_c^\infty(\RR^2)$ and $U_t\varphi_0$ defined in Section \ref{sec.rsbm}. It is also shown in \cite{perkowski2021rough} that $\mu$ is super-exponentially persistent, i.e. 
\[
    \PP\left[\lim_{t\rightarrow \infty}e^{-t\lambda}\langle\mu(t),\varphi\rangle = \infty\right] >0
\]
for all $\lambda > 0$ and nonzero positive function $\varphi\in C_c^\infty(\RR^2)$. This indicates that with positive probability, the mass of the rough super Brownian motion grows super-exponentially and eventually it will spread out to the whole space $\RR^2$. This is due to the growth of spatial white noise $\xi$ at infinity which creates more and more favorable islands the further we move from the origin. Thus it is not clear if there is a cascade of branching events where mass escapes to $\infty$ along a chain of favorable islands in finite time, or if for each finite time the rough super Brownian motion will almost surely stay in some compact set. This is the question that we address in this paper and we show that the rough super Brownian motion does possess the compact support property, just like its classical counterpart. In other words, the growth of the white noise at $\infty$ does not allow the measure-valued process to spread infinitely fast.

There are several classical results on the compact support property of the measure-valued processes, although to the best of our knowledge none for unbounded potentials. Therefore, our result is even new if we replace the white noise by a smooth stationary Gaussian process with short-range correlations . J{\'a}nos Engl{\"a}nder and Ross G. Pinsky \cite{englander1999construction,englander2006compact} consider measure-valued processes satisfying the log-Laplace equation 
\begin{equation*}
    \EE[e^{-\langle\mu(t),f\rangle}] = e^{-\langle\mu(0),u_f\rangle},
\end{equation*}
where $u_f$ is the minimal non-negative solution to the evolution equation 
\begin{equation}\label{equ.classical_parabolic}
    \left\{
        \begin{array}{ll}
             u_t = Lu + \beta u - \alpha u^2& \text{ in }\RR^d\times(0,\infty),  \\
             u(x,0) = f(x)&  \text{ in }\RR^d, \\
             u\geq 0 & \text{ in }\RR^d\times[0,\infty),
        \end{array}
    \right.
\end{equation}
where $L = \frac{1}{2}\sum_{i,j=1}^da_{i,j}\frac{\partial^2}{\partial x_i\partial x_j}  + \sum_{i=1}^db_i\frac{\partial}{\partial x_i}$ and $d$ is the dimension of the space.  Under certain conditions ($\beta$ is bounded from above with some smoothness property of $\alpha,\beta$), they relate the compact support property of $\mu$ to the uniqueness of non-negative solutions of equation $(\ref{equ.classical_parabolic})$. These results do not apply to rough super Brownian motion since the coefficient $\beta = \xi$ is only a distribution and it is unbounded from above. 

Taking a detailed look at the proofs in \cite{englander1999construction,englander2006compact} we see that the difficulty lies in  understanding the limit behaviour of solutions to the equation of zero initial value
\[
    \partial_t\varphi  = \fH\varphi - \frac{\kappa}{2}\varphi^2+\phi \qquad\text{ in }\RR_+\times\RR^2,
\]
when $\phi$ tends to infinity outside some compact domain and when this compact domain becomes larger and larger. To control these solutions, we utilize the method developed in \cite{moinat2020space,moinat2020local,chandra2023priori}, where the authors obtain interior estimates for the $\Phi_3^4$ equation. By adapting the method to our case, we are able to describe limit points as mild solutions in a suitable weighted space, for which there is uniqueness. This give an affirmative answer to the compact support property of the rough super Brownian motion.

\section{Preliminaries and Notation}

\subsection{Basic notation}
We set $\fP{n}:=[-n,n]^d$ throughout this paper and we will use $z = (t,x)$ for space-time points, while $t$ represents time and $x$ or $y$ represent spatial points only. 

For any region $D \subset \RR^d$, and $0 < T_1 < T_2$ we set $D^{T_1,T_2} := [T_1,T_2]\times D$. When $T_1 = 0$, we simply write $D^{T} = D^{0,T}$. For $r > 0$, we define $D_r:=\{x \in D: d(x,\partial D) \geq r\}$, where $d$ means spatial distance:
\begin{equation}\label{def.Spatial_Distance}
    d(x,\bar{x}) = |x-\bar{x}| := \max_{i=1,\dots, d}\left\{|x_i-\bar{x_i}|\right\}.
\end{equation}

We also write $d'$ for the space-time distance with parabolic scaling,
\begin{equation}\label{def.Parabolic_Distance}
    d'((t,x),(\bar{t},\bar{x})) := \max\left\{\sqrt{|t-\bar{t}|}, |x-\bar{x}|\right\}.
\end{equation}
Furthermore, for space-time points $z \in \RR_+ \times \RR^d$ and $R>0$, we define the ball $B'(z,R):=\{\bar{z} \in \RR_+\times\RR^d: d'(\bar{z},z) \leq R, \bar{t}\leq t\}$. We also define $B(x,R) := \{\bar{x} \in \RR^d: |x-\bar{x}|\leq R\}$ and $B(x,R,T) = B^T(x,R):= [0,T]\times B(x,R)$.

The Fourier transform of $f \in L^1(\RR^d)$ is defined as
\[
    \fF f(k) = \int_{\RR^d} e^{-2\pi i k\cdot x} f(x) dx,
\]
and it is extended to (ultra-)distributions $f$ by duality.

\subsection{Notations on the regularity}\label{sec:notation-regularity}
The symbol $\xi$ will always denote a ``typical realization'' (i.e. satisfying the regularity requirements mentioned below) of a spatial white noise or a mollified spatial white noise on $\RR^d$. We also define $\fI\xi$ by the equation 
\begin{equation}\label{def.X_xi}
     -\Delta \mathcal I \xi = \chi(\fD)\xi 
\end{equation}
for a smooth function $\chi$ which equals 1 outside of $\left(-\frac{1}{4},\frac{1}{4}\right)^d$ and equals $0$ on $\left(-\frac{1}{8},\frac{1}{8}\right)^d$. And we consider the two-variable distributions (functions in the case of a mollified white noise) formally defined as
\begin{equation}
    (\xi X)(x,\bar x) := \xi(\bar x) (X(\bar x) - X(x)),
\end{equation}
where $X(x)=x$, and
\begin{equation}
    ((\fI \xi)\xi)(x,\bar x):=(\fI\xi(\bar{x}) - \fI\xi(x))\xi(\bar{x}) - C,
\end{equation}
where $C$ is a renormalization constant. We will discuss below that for a.e. realization of the (mollified) space white noise in $d=2$ these distributions are well-defined.

To measure the regularity of distributions, we follow the approach of \cite{chandra2023priori,moinat2020local,moinat2020space}, but we are more restrictive in the choice of $\Phi$ below because this allows us to compare the regularity defined here with classical notions of regularity, which is not clear to us in the generality of \cite{chandra2023priori,moinat2020local,moinat2020space}. We fix a non-negative (and non-vanishing)  smooth and symmetric function $\tilde{\Phi}$ on $\RR^d$ with support contained in $B(0,\frac{1}{2})$. Define
\[
    \Phi := (\tilde{\Phi}*\tilde{\Phi})^2.
\]
Then $\Phi$ is a non-negative symmetric smooth function with support contained in $B(0,1)$, and $\fF\Phi$ is strictly positive on $\RR^d$. In fact, $\fF\Phi = ((\fF\tilde{\Phi})^2)*((\fF\tilde{\Phi})^2)$ and $(\fF\tilde{\Phi})^2$ is non-negative and real analytic due to the symmetry and compact support property of $\tilde{\Phi}$. Lojaciewicz's structure theorem \cite{krantz2002primer} tells us that the set of zeros of a $d$ dimensional real analytic function is of Hausdorff dimension $d-1$, hence a null set. However, if $\fF\Phi = 0$ for some point $x$, it requires the zero set of $\fF\tilde{\Phi}$ to be of infinite Lebesgue measure. This is a contradiction, which means $\fF\Phi$ is strictly positive. Multiplying with a positive constant if necessary, we may assume that the integral of $\Phi$ over $\RR^d$ equals $1$. 

Next, for $\delta>0$ we set $\Phi^\delta(x) = \delta^{-d}\Phi(\frac{x}{\delta})$ and define \[
    \Psi^{\delta,n} :=\Phi^{\delta2^{-1}}*\Phi^{\delta2^{-2}}*\cdots*\Phi^{\delta2^{-n}}\qquad \text{and} \qquad \Psi^\delta = \lim_{n\rightarrow\infty}\Psi^{\delta,n}.
\]
Then we have $\Psi^\delta = \Phi^{\frac{\delta}{2}}*\Psi^{\frac{\delta}{2}}$ and so the Fourier transform of $\Psi^\delta$ is still strictly positive. Indeed, if $\fF \Psi^\delta(z) = 0$, then from $\fF \Psi^\delta = \fF \Phi^{\frac{\delta}{2}}\fF\Psi^{\frac{\delta}{2}}$ and $\fF \Phi^{\frac{\delta}{2}} > 0$ we deduce that also $\fF \Psi^\delta(z/2) = \fF \Psi^{\delta/2}(z) =0$. Iterating this, we get $\fF \Psi^{\delta}(2^{-n}z)=0$ for all $n \in \NN$ and therefore $\fF \Psi^\delta(0) = 0$ by continuity. But this is impossible because $\fF \Psi^\delta(0) = 1$ by construction.

With the function $\Psi$, we define local norms of distributions of negative regularity. We use $(\cdot)_\delta$ to denote the convolution with $\Psi^\delta$, and $(\cdot)_{\delta,n}$ for the convolution with $\Psi^{\delta,n}$ for $n\geq 1$. For example, $f_\delta = f*\Psi^\delta$. On a set $D\subset \RR^d$ and for $\alpha < 0$, the local $\alpha-$H\"older seminorm of a distribution $f$ is defined as
	\begin{equation}\label{def.negative_holder_seminorm}
	    \|f\|_{\alpha,D} := \sup_{\delta \in (0,1]}\delta^{-\alpha} \|f_\delta\|_D,
	\end{equation}
	where $\|\cdot\|_D$ is the supremum norm over $D$. Note that $\|f\|_{\alpha,D}$ depends on $f$ on the set $B(D,1)$, but this will not influence our result since we will only estimate functions that are defined on the whole space. In particular, for the space white noise $\xi$, its multiplication with first order monomials $\xi X$ and for the related distribution $\{(\fI\xi)\xi(\cdot, \bar x)\}_{x \in \RR^d}$ we have a.s.
	\begin{equation}\label{def.xi_norm}
	    \|\xi\|_{n,-1-\epsilon} := \|\xi\|_{\fP{n},-1-\epsilon} = \sup_{x\in \fP{n}}\sup_{\delta \in (0,1]}\delta^{1+\epsilon}|\xi_\delta(x)| < \infty,
	\end{equation}
	\begin{equation}\label{def.xiX_norm}
	    \|\xi X\|_{n,-\epsilon} := \sup_{x\in \fP{n}}\sup_{\delta \in (0,1]}\delta^{\epsilon}\left|\int (\xi X)(x,\bar x) \Psi^\delta(x - \bar x) d\bar x\right| < \infty,
	\end{equation}
	\begin{equation}\label{def.enhanced_xi_norm}
	    \|(\fI\xi)\xi\|_{n,-2\epsilon}:= \sup_{x\in \fP{n}}\sup_{\delta \in (0,1]}\delta^{2\epsilon}\left|\int((\fI\xi)\xi)(x,\bar{x})\Psi^\delta(x-\bar{x})d\bar{x}\right| < \infty,
	\end{equation}
	for all $n \in \NN$ and all $\epsilon > 0$.	
	
	We will also work with functions $U(z,\bar{z})$ of two variables, and we will write $U(t,x,\bar{x}):=U((t,x),(t,\bar{x}))$. For $\alpha\in(1,2)$, the $\alpha-$H\"older semi-norm for $U(t,\cdot)$ on $D$ for points with distance less than $r$ is defined as
	\begin{equation}\label{Def.AlphaHolder.U}
	    [U(t,\cdot)]_{\alpha,D,r} := \sup_{x \in D}\inf_{\nu\in\RR^d}\sup_{\substack{\bar{x}\in D\backslash\{x\}\\ |\bar x - x|<r}} \frac{|U(t,x,\bar{x}) - \nu\cdot(\bar{x}-x)|}{|x-\bar{x}|^\alpha}.
	\end{equation}
	Note that $[U(t,\cdot)]_{\alpha,D,r}<\infty$ requires that $U(t,x,x)=0$ for all $x\in D$. And if $U(t,\cdot)$ is smooth in both its variables, then at the point $x$ the optimal $\nu$ is the spatial gradient of $U(t,x,\cdot)$ at the point $x$. 
	We also define $\bar{U}(z,\bar{z}) = U(z,\bar{z}) - \nu(x)\cdot(\bar{x}-x)$ when the spatial gradient of $U(t,x,\cdot)$ exists at point $x$ and is denoted by $\nu(x)$. As shorthand notation we define:
	\[
	    [U(t,\cdot)]_{\alpha,D} := [U(t,\cdot)]_{\alpha,D,\infty}.
	\]
	
	In addition, for $\alpha \in (0,1)$, we use $\|\cdot\|_{\alpha,D}$ (resp. $[\cdot]_{\alpha,D}$) to denote the $\alpha-$Hölder norm (resp. semi-norm) on $D$, and $\|\cdot\|_{\alpha,D,r}$ (resp. $[\cdot]_{\alpha,D,r}$) to denote the $\alpha-$Hölder norm on $D$ within spatial distance $r$. For space-time functions $f$ on $[0,T]\times D$, we define the semi-norm 
	\[
        [f]_{C_T(\alpha,D,r)}:= \sup_{0\leq t\leq T}[f(t,\cdot)]_{\alpha,D,r} = \sup_{0\leq t\leq T}\sup_{\substack{x\neq\bar{x}\in D,\\ |x-\bar{x}|<r}}\frac{|f(t,x)-f(t,\bar{x})|}{|x-\bar{x}|^\alpha}.
	\]
	To be consistent with the negative H\"older-norm defined above, we write
    \[
        [\cdot]_{\alpha,D} := [\cdot]_{\alpha,D,1}.
    \]
    This also extends to norms and to the weighted (semi-)norms that we  introduce below. On the whole space $\RR^d$, we will simply write
    \[
        \|\cdot\|_{\alpha}:=\|\cdot\|_{\alpha,\RR^d}.
    \]
	Similarly, for functions of two variables we define 
	\begin{eqnarray*}
	    [U]_{C_T(\alpha,D,r)}&:=&\sup_{0\leq t\leq T}[U(t,\cdot)]_{\alpha,D,r}.
	\end{eqnarray*}
	For simplicity, we will drop the $\alpha$ when we are valuing the supremum spatial norm.
 
\subsection{Notations on weights}
We use the same weights as in \cite{martin2019paracontrolled}.
\begin{definition}\label{def.weight}
   We write
   \[
        \omega^{pol}(x) := \log(1+|x|),\qquad\omega_\sigma^{\exp}(x) :=|x|^\sigma,
   \]
   where $x \in \RR^d,\sigma \in (0,1)$. For $\omega \in \pmb{\omega}:=\{\omega^{pol}\}\cup \{\omega_\sigma^{\exp}|\sigma\in(0,1)\}$, we denote by $\pmb{\varrho}(\omega)$ the set of measurable, strictly positive $\theta:\RR^d\rightarrow(0,\infty)$ such that for some $\lambda=\lambda(\theta) >0$ we have 
   \[
        \theta(x)\lesssim\theta(y)e^{\lambda\omega(x-y)},\qquad x,y \in \RR^d.
   \]
   We also write $\pmb{\varrho}(\pmb{\omega}):=\bigcup_{\omega\in\pmb{\omega}}\pmb{\varrho}(\omega)$. The objects $\theta \in \pmb{\varrho}(\pmb{\omega})$ are called \emph{weights}.
\end{definition}

\begin{remark}\label{rem.weight}
    We will mainly consider weights
    \[
        p(a)(x) := (1+|x|)^a,\qquad e(l)(x):=e^{l|x|^\sigma},
    \]
    for non-negative $a,l$.
\end{remark}
For any norm defined in the previous subsection, we may add a weight $\theta\in \pmb{\varrho}(\pmb{\omega})$ and consider a corresponding weighted norm. For example, we define
\[
    [f]_{C_T(\alpha,D,r,\theta)} := \sup_{t\leq T}\sup_{\substack{x,y\in D,\\|x-y|\leq r}}\frac{|f(t,x)-f(t,y)|}{\theta(x)|x-y|^\alpha},
\]
for $\alpha \in (0,1)$, and for $\alpha < 0$
\[
    \|f\|_{C_T(\alpha,D,\theta)} := \sup_{t\leq T}\sup_{0\leq\delta\leq 1}\delta^{-\alpha}\|f(t,\cdot)_\delta\theta^{-1}\|_D.
\]
This also applies to special norms that involve the noise $\xi$. For example,
	\begin{equation}
	    \|(\fI\xi)\xi\|_{-2\epsilon,\RR^d,\theta}:= \sup_{x}\sup_{\delta \in (0,1]}\delta^{2\epsilon}\left|\theta^{-1}(x)\int((\fI\xi)\xi)(x,\bar{x})\Psi^\delta(x-\bar{x})d\bar{x}\right|.
	\end{equation}

In addition, we will consider weighted Besov spaces.   Let $\rho_{-1},\rho_0 \in C_c(\RR^d)$ be two non-negative and radial functions such that the support of $\rho_{-1}$ is contained in a ball $B \subset \RR^d$, the support of $\rho_0$ is contained in an annulus $\{x \in \RR^d: 0 < a \leq |x|\leq b\}$ and such that with 
\[
    \rho_{j} = \rho_{0}(2^{-j}\cdot), \qquad j \in \NN_0,
\]
the following conditions are satisfied:
\begin{enumerate}
    \item $\sum_{i=-1}^\infty \rho_j(x) = 1$ for all $x \in \RR^d$;
    \item supp($\rho_i$) $\cap$ supp($\rho_j$) = $\emptyset$ whenever $|i-j| > 1$.
\end{enumerate}
To deal with weights in $\pmb{\varrho}(\omega^{\exp}_\sigma)$ we need to consider ultra-distributions, and in particular we need to assume (which is possible) that the partition of unity is in $\mathcal S_{\omega^{\exp}_\sigma}$, the space of smooth functions $f$ such that $f$, $\fF f$ and all their derivatives decrease faster than $e^{-\lambda \omega^{\exp}_\sigma}$ at infinity, for any $\lambda>0$. The topological dual of $\mathcal S_\omega$ is denoted by $\mathcal S_\omega'$ and it is called the space of ultra-distributions. Here we do not need to know much about ultra-distributions and we refer to \cite[Section~2.2]{martin2019paracontrolled} for a more detailed discussion.

Given $\omega \in \pmb{\omega}$ and  a partition of unity with $\rho_{-1}, \rho_0 \in \mathcal S_\omega$ we define for any ultra-distribution $f\in\fS_\omega'$ the Littlewood-Paley blocks of $f$ as 
\[
    \Delta_jf = \fF^{-1}(\rho_j\fF(f)), \qquad j \ge -1.
\]
\begin{definition}
   For $\alpha\in \RR, p,q \in[1,\infty]$ and $\theta \in \pmb{\varrho}(\pmb{\omega})$, we define the weighted Besov space $B_{p,q}^\alpha(\RR^d,\theta)$ by
   \[
        B_{p,q}^\alpha(\RR^d,\theta):= \{f \in \fS'_\omega: \|f\|_{B_{p,q}^\alpha(\RR^d,\theta)} := \|(2^{j\alpha}\|\theta^{-1}\Delta_jf\|_{L^p})\|_{\ell^q} < \infty\}.
   \]
\end{definition}
Note that we consider $\|\theta^{-1}\Delta_j f\|_{L^p}$, while in \cite{martin2019paracontrolled} it would be  $\|\theta\Delta_j f\|_{L^p}$. In the notation of \cite{martin2019paracontrolled} our weighted Besov space would be called $B_{p,q}^\alpha(\RR^d,\theta^{-1})$. 

Next, let us compare the different notions of regularity.

\begin{lem}
   Let $\alpha \in (-\infty,1)\setminus\{0\}$, $\theta\in\pmb{\varrho}(\pmb{\omega})$, and let $f\in\fS'_\omega$ be an ultra-distribution on $\RR^d$. Then $\|f\|_{\alpha,\theta}<\infty$ if and only if $f \in B^\alpha_{\infty,\infty}(\RR^d,\theta)$, and
   \[
        \|f\|_{B^\alpha_{\infty,\infty}(\RR^d,\theta)} \simeq \|f\|_{\alpha,\theta}.
   \]
\end{lem}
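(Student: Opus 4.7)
The plan is to prove the equivalence by comparing both norms scale-by-scale, using the strict positivity of $\fF\Psi^\delta$ (established earlier in the section) to relate convolutions against $\Psi^\delta$ with Littlewood--Paley blocks. I split into the cases $\alpha\in(0,1)$ and $\alpha<0$. For $\alpha\in(0,1)$ the statement is the weighted analogue of the classical identification of $B^\alpha_{\infty,\infty}$ with $C^\alpha$, and I would simply invoke the corresponding ultra-distributional, weighted result from \cite{martin2019paracontrolled}; the compatibility between our partition of unity and the mollifier $\Phi$ is immediate once we know $\fF\Phi>0$.

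For $\alpha<0$ I exploit the scaling identity $\fF\Psi^\delta(k)=\phi(\delta k)$, where $\phi:=\fF\Psi^1$ is strictly positive and decays sub-exponentially since $\Psi^1\in\fS_\omega$. For the direction $\|f\|_{\alpha,\theta}\lesssim\|f\|_{B^\alpha_{\infty,\infty}(\RR^d,\theta)}$, I decompose $f_\delta=\sum_{j\geq -1}(\Delta_jf)_\delta$ and split at $j_0:=\lfloor\log_2(1/\delta)\rfloor$. For $j\leq j_0$ I use that $\Psi^\delta$ is supported in $B(0,\delta)\subset B(0,1)$, so the weight inequality $\theta(x)\lesssim\theta(y)e^{\lambda\omega(x-y)}$ together with $\|\Psi^\delta\|_{L^1}=1$ yields $\|\theta^{-1}(\Delta_jf)_\delta\|_\infty\lesssim\|\theta^{-1}\Delta_jf\|_\infty\lesssim 2^{-j\alpha}$, and the geometric sum is $\lesssim\delta^\alpha$. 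For $j>j_0$ I write $(\Delta_jf)_\delta=K_{j,\delta}\ast\Delta_jf$ with $\fF K_{j,\delta}=\phi(\delta\,\cdot\,)\,\tilde\rho_j$, where $\tilde\rho_j$ equals $1$ on $\mathrm{supp}\,\rho_j$; a rescaling by $2^j$ together with the rapid decay of $\phi$ gives $\int|K_{j,\delta}(z)|e^{\lambda\omega(z)}dz\lesssim(\delta 2^j)^{-N}$ for arbitrary $N$, which makes the tail sum also bounded by $\delta^\alpha$.

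For the opposite direction $\|f\|_{B^\alpha_{\infty,\infty}(\RR^d,\theta)}\lesssim\|f\|_{\alpha,\theta}$, I invert the convolution on each frequency annulus. I fix $c>0$ small enough that $\phi(ck)$ is bounded below on $\mathrm{supp}\,\rho_0$ (possible because $\phi$ is continuous and strictly positive on a compact set), set $\delta_j:=c\,2^{-j}$, and define $K_j:=\fF^{-1}(\rho_j/\fF\Psi^{\delta_j})$. By construction $\Delta_jf=K_j\ast f_{\delta_j}$. The rescaling $K_j(x)=2^{jd}K_0(2^jx)$ combined with the sub-exponential decay of $K_0$ then yields $\|\theta^{-1}\Delta_jf\|_\infty\lesssim\|\theta^{-1}f_{\delta_j}\|_\infty\lesssim\delta_j^\alpha\|f\|_{\alpha,\theta}\lesssim 2^{-j\alpha}\|f\|_{\alpha,\theta}$. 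The low-frequency block $\Delta_{-1}f$ is treated analogously with $\delta_{-1}=1$.

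The main obstacle is the uniform handling of the weight throughout: each convolution kernel appearing in the argument must decay fast enough that $\int|K(z)|e^{\lambda\omega(z)}dz$ is finite, and in the high-frequency case uniformly small in $j$. For polynomial weights any Schwartz kernel is enough, but for exponential weights $\omega^{\exp}_\sigma$ we are forced to keep every object in the ultra-distributional class $\fS_\omega$---in particular the partition of unity and $\Psi^\delta$---which is exactly what motivates the framework of Section~\ref{sec:notation-regularity}.
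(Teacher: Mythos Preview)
Your strategy for $\alpha<0$ coincides with the paper's: both invert the mollification on each frequency shell via
\[
\Delta_j f \;=\; \fF^{-1}\!\Bigl(\tfrac{\rho_j}{\fF\Psi^{\delta_j}}\Bigr)\ast f_{\delta_j},\qquad \delta_j\sim 2^{-j},
\]
and then rescale to reduce to a single kernel $K_0=\fF^{-1}(\rho_0/\fF\Psi^1)$. (Your auxiliary constant $c$ is unnecessary, incidentally: the paper uses that $\fF\Psi^1>0$ everywhere, not merely on $\mathrm{supp}\,\rho_0$, so $\delta_j=2^{-j}$ works directly.) For the opposite direction the paper simply says ``similar arguments'', so your Littlewood--Paley splitting of $f_\delta$ is a legitimate complement.

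The genuine gap is that you assert ``the sub-exponential decay of $K_0$'' as if it were automatic once ``every object'' is kept in $\fS_\omega$. It is not: $\rho_0\in\fS_\omega$ and $\fF\Psi^1>0$ do \emph{not} by themselves imply $\rho_0/\fF\Psi^1\in\fS_\omega$, because $\fS_\omega$ is not stable under division by an arbitrary smooth positive function. This is precisely the point where the paper does real work. It shows, by induction via Leibniz's rule applied to $1=\fF\Psi^1\cdot(\fF\Psi^1)^{-1}$ and the bound $\|D_i^k\fF\Psi^1\|_\infty\le C_0^k$ (from compact support of $\Psi^1$), that on $\mathrm{supp}\,\rho_0$ one has the Gevrey-type estimate
\[
\Bigl\|D_i^k\tfrac{1}{\fF\Psi^1}\Bigr\|_{L^\infty}\;\lesssim_\delta\;\delta^k C_1^k(k!)^{1/\sigma},
\]
and then combines this with the analogous bounds on $\rho_0$ (which \emph{do} follow from $\rho_0\in\fS_\omega$) to conclude via \cite[Lemma~3.7]{martin2019paracontrolled} that $K_0$ decays faster than $e^{-\hat\lambda|x|^\sigma}$ for every $\hat\lambda>0$. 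Without this step your argument is incomplete for exponential weights, and the same issue recurs in your high-frequency tail bound $\int|K_{j,\delta}|e^{\lambda\omega}\lesssim(\delta 2^j)^{-N}$.

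For $\alpha\in(0,1)$ the paper gives a short self-contained argument using $\int\fF^{-1}\rho_k=0$ for $k\ge0$ to write $\Delta_k f(x)=(\fF^{-1}\rho_k)\ast(f-f(x))(x)$ and then exploit the H\"older bound directly; this is quicker than routing through the reference.
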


\begin{proof}
    We only show the inequality $\|f\|_{B^\alpha_{\infty,\infty}(\RR^d,\theta)} \lesssim \|f\|_{\alpha,\theta}$, the opposite one can be handled by similar arguments. We divide the proof into two parts. When $\alpha > 0$ and $k > -1$, we have 
    \begin{equation*}
        \Delta_k f(x) = (\fF^{-1}\rho_k) * (f - f(x))(x),
    \end{equation*}
    since $\int \fF^{-1}\rho_k(x) dx = \rho_k(0) = 0$. Using the definition of $\|f\|_{\alpha,\theta}$, we estimate
    \begin{equation*}
        \begin{aligned}
            |(&\fF^{-1}\rho_k) * (f - f(x))(x)\theta^{-1}(x)| \\
            & = \left|\int (\fF^{-1}\rho_k)(y)(f(x-y) - f(x))\theta^{-1}(x)dy\right|\\
            & \lesssim \left(\left|\int_{|y|\leq 1}|(\fF^{-1}\rho_k)(y)|\cdot|y|^\alpha dy\right|+ \left|\int_{|y|\geq 1}|(\fF^{-1}\rho_k)(y)|(1+e^{\lambda(\theta)\omega(y)})dy\right|\right)\|f\|_{\alpha,\theta}\\
            & \lesssim 2^{-k\alpha}\left(1 + \left|\int(\fF^{-1}\rho_0)(y)|y|^\alpha e^{\lambda\omega(y)}dy\right|\right)\|f\|_{\alpha,\theta} \lesssim 2^{-k\alpha}\|f\|_{\alpha,\theta}.
        \end{aligned}
    \end{equation*}
    The last inequality holds because $\rho_0 \in \mathcal{S}_\omega$. The estimate for $k=-1$ is similar and therefore the result is true for $\alpha > 0$.
    
    Next, we consider the case $\alpha <0$. The $k-$th block of $f$ is given by
    \begin{equation*}
        \begin{aligned}
             \Delta_kf &= \fF^{-1}\left(\rho_k\fF f\right)\\
             &= \fF^{-1}\left(\frac{\rho_k}{\fF\Psi^\delta}\fF f\fF\Psi^\delta\right)\\
             &=\fF^{-1}\left(\frac{\rho_k}{\fF\Psi^\delta}\right)*f_\delta,\qquad \forall 0 < \delta \leq 1,
        \end{aligned}
    \end{equation*}
    where we used that $\mathcal F\Psi^\delta$ is strictly positive. With $\delta = 2^{-k}$, we have for $k \geq 0$:
    \begin{equation*}
        \begin{aligned}
            \|\Delta_kf\cdot\theta^{-1}\|_{L^\infty} &\leq \left\|\fF^{-1}\left(\frac{\rho_k}{\fF\Psi^\delta}\right)e^{\lambda\omega}\right\|_{L^1}\|f_\delta\theta^{-1}\|_{L^\infty} \\
        &\lesssim  \left\|\fF^{-1}\left(\frac{\rho_0}{\fF\Psi^1}\right)e^{\lambda\omega}\right\|_{L^1}2^{-k\alpha},
        \end{aligned}
    \end{equation*}
    where we used that $e^{\lambda \omega(2^{-k} z)} \le e^{\lambda \omega(z)}$.
   Let us assume $\left\|\fF^{-1}\left(\frac{\rho_0}{\fF\Psi^1}\right)e^{\lambda\omega}\right\|_{L^1} < \infty$ for now, which we will show at the end of this proof. Then 
    \[
        \sup_{k\geq 0}2^{k\alpha}\|\Delta_kf\cdot\theta^{-1}\|_{L^\infty} \lesssim  \left\|\fF^{-1}\left(\frac{\rho_0}{\fF\Psi^1}\right)e^{\lambda\omega}\right\|_{L^1} < \infty.
    \]
    For $k = -1$, we choose $\delta = 1$ and obtain 
    \begin{equation*}
        \begin{aligned}
            \|\Delta_kf\cdot\theta^{-1}\|_{L^\infty} \lesssim  \left\|\fF^{-1}\left(\frac{\rho_{-1}}{\fF\Psi^1}\right)e^{\lambda\omega}\right\|_{L^1} < \infty.
        \end{aligned}
    \end{equation*}
    Thus, $f \in B^\alpha_{\infty,\infty}(\RR^d,\theta)$.
    
    Now, let us prove that $\left\|\fF^{-1}\left(\frac{\rho_0}{\fF\Psi^1}\right)e^{\lambda\omega}\right\|_{L^1} < \infty$. The case $\omega(x) = \log(1+|x|)$ follows directly from the fact that $\rho_{-1},\rho_0 \in C^\infty_c$ have compact support. For the case $\omega(x) = |x|^\sigma$, we only need to show that for any $\hat{\lambda} > 0$
    \[
        \fF^{-1}\left(\frac{\rho_0}{\fF\Psi^1}\right) \lesssim_{\hat{\lambda}}e^{-\hat{\lambda}|x|^\sigma},
    \]
    and by Lemma 3.7 in \cite{martin2019paracontrolled} this follows if for all $\delta > 0$, there is $C > 0$ such that for all $l\geq 0$ and $i=1,2$, we have 
    \[
        \left\|D^l_i\left(\frac{\rho_0}{\fF\Psi^1}\right)\right\|_{L^1} \lesssim_\delta \delta^l C^l(l!)^{\frac{1}{\sigma}}.
    \]
    Now on the support of $\rho_0$, we have, by applying Leibniz's rule to $1 = \fF\Psi^1\cdot\frac{1}{\fF\Psi^1}$,
    \[
        0 = \sum_{k=0}^l \binom{l}{k} D^k_i(\fF\Psi^1)D^{l-k}_i\left(\frac{1}{\fF\Psi^1}\right).
    \]
    By induction (make use of $\|D_i^k(\fF\Psi^1)\|_{L^\infty} \leq C_0^k$ for some constant $C_0$ since $\Psi^1$ is compactly supported), we can show that on the support of $\rho_0$, 
    \[
        \left\|D_i^k\left(\frac{1}{\fF\Psi^1}\right)\right\|_{L^\infty}\lesssim_{\delta} \delta^k C_1^k(k!)^{\frac{1}{\sigma}},
    \]
    for $\delta < 1$, and the constant in the inequality can be chosen proportional to $e^{\frac{1}{\delta}}$. Hence, again by applying Leibniz's rule to $D_i^l\left(\frac{\rho_0}{\fF\Psi^1}\right)$, it remains to bound
    \[
        \left\|\sum_{k=0}^l \binom{l}{k}(D_i^k\rho_0) \delta^{l-k}C_1^{l-k}((l-k)!)^{\frac{1}{\sigma}}B_1\right\|_{L^1}.
    \]
    Now we use that $\rho_0$ is compactly supported and that $\rho_0\in \mathcal S_\omega(\RR^d)$, which gives
    \[
        \|D_i^k\rho_0\|_{L^1} \lesssim \|D_i^k\rho_0\|_{L^\infty} \lesssim \||x|^k\hat{\rho}\|_{L^1} \lesssim \int |x|^k e^{-|x|^{\sigma}}dx \lesssim \left(\frac{k+d}{\sigma}\right)^{\lfloor\frac{k}{\sigma}\rfloor}\lesssim C_2^k (k!)^{\frac{1}{\sigma}}.
    \]
    Thus, we obtain the result.
\end{proof}


We will also measure the time regularity, so we consider also the norm
\[
    \|f\|_{\fL^{\alpha}_T(\RR^d,\theta_\cdot)} = \|f\|_{C_T(\alpha, \RR^d,\theta_\cdot)} + \|f\|_{C^{\frac{\alpha}{2}}_T(\RR^d,\theta_\cdot)},
\]
where 
\[
    \|f\|_{C^{\frac{\alpha}{2}}_T(\RR^d,\theta_\cdot)}:= \sup_{0 \leq s < t\leq T}\frac{\|f(t)- f(s)\|_{\RR^d,\theta_t}}{|t-s|^{\frac{\alpha}{2}}}.
\]
and $\theta_\cdot$ is a time-dependent weight. If we do not specify the time-dependence, then we choose $\theta_t \equiv \theta$ and just write $\fL^\alpha_T(\RR^d,\theta)$. 

\section{Rough Super Brownian Motion}\label{sec.rsbm}

Here we recall and extend the definition of the rough super-Brownian motion from \cite{perkowski2021rough}. From now on, we will work in dimension 2 and in what follows the letter $d$ will \underline{not} represent the dimension. While our arguments are dimension independent they do need regularity requirements that the white noise only satisfies in $d=2$. In higher dimensions we could consider a slightly mollified white noise and apply the same arguments, or we could treat a $3d$ white noise with the same approach but at the price of more technicalities. In $d\ge 4$ the parabolic Anderson model with white noise is scaling (super-)critical and there does not exist any solution theory for it yet, and thus in particular there exists no rough super-Brownian motion with white noise environment in $d \ge 4$.

We fix a parameter $\kappa > 0$, which will describe the strength of branching in the rough super-Brownian motion. Let $\varphi_0\in C_c^{\infty}(\mathbb{R}^2), \varphi_0\geq 0$ and $\phi \in C_c(\mathbb{R}^2),\phi\geq 0$. We use $U^{\phi}_t\varphi_0$ to denote the solution to the equation 
\begin{equation}\label{equ.PAM+square+phi}
    \left\{
    \begin{array}{cc}
         \partial_t\varphi  = \fH\varphi - \frac{\kappa}{2}\varphi^2+\phi, & \text{ in }\RR_+\times\RR^2, \\
         \varphi(0,\cdot) = \varphi_0, & \text{ on } \{0\}\times\RR^2,
    \end{array}
    \right.
\end{equation}
where $\mathcal{H} = \Delta + \xi$ is the so-called Anderson Hamiltonian; see section \ref{sec.proof_of_main_thm} for the solution theory, where it is also shown that $U^\phi_t \varphi_0 \ge 0$ for all $t \ge 0$. Just to clarify the terminology, note that for $\kappa=0$ and $\phi=0$ this equation is the parabolic Anderson model. We will discuss the solution theory of \eqref{equ.PAM+square+phi} in the next section. In particular, we use $U_t\varphi_0$ to denote the solution for $\phi = 0$, i.e.
\begin{equation}\label{equ.PAM+square}
    \left\{
    \begin{array}{cc}
         \partial_t\varphi  = \fH\varphi - \frac{\kappa}{2}\varphi^2, & \text{ in }\RR_+\times\RR^2, \\
         \varphi(0,\cdot) = \varphi_0, & \text{ on } \{0\}\times\RR^2.
    \end{array}
    \right.
\end{equation}
The definition of the rough super Brownian motion only requires making sense of equation (\ref{equ.PAM+square}), which can be solved as long as the resonant product $\fI\xi\odot \xi$ is provided. Here we write
\[
    f\para g = \sum_{i<j-1}\Delta_i f \Delta_j g,\qquad f\reso g = \sum_{|i-j|\le 1} \Delta_i \Delta_j g,
\]
for the paraproduct and the resonant product; see \cite{gubinelli2015paracontrolled} for the estimates on $\para$ and $\reso$ that we will need.

We let $\epsilon > 0$ be small enough ($\epsilon < 1/3$ suffices) and write the regularities of the ($2d$) white noise and related distributions as $-1-\epsilon$, $-2\epsilon$, etc.  We make the following assumption on the noise $\xi$.
\begin{assumption}\label{ass.rsbm}
    Let $\{\xi_\alpha\}_{\alpha\in(0,1)}$  be a family of smooth functions. We assume that there exist distributions $\xi,\fI\xi\odot\xi$ and constants $C_\alpha$ such that, with some $\epsilon' < \epsilon$,
    \[
        \lim_{\alpha\rightarrow 0} \|\xi_\alpha - \xi\|_{B^{-1-\epsilon'}_{\infty,\infty}(\RR^2,p(a))} = 0, \qquad \lim_{\alpha\rightarrow 0} \|\fI{\xi_\alpha}\odot\xi_\alpha - C_\alpha - \fI\xi\odot\xi\|_{B^{-2\epsilon'}_{\infty, \infty}(\RR^2,p(a))} = 0,
    \]
    for all $a>0$, where $-\Delta \fI{\xi_\alpha} = \chi(\fD)\xi_\alpha$.
\end{assumption}

\begin{lem}
    Suppose $\{\xi_\alpha\}_{\alpha\in(0,1)}$ is a family of smooth functions indexed by $\alpha$. Then Assumption~\ref{ass.rsbm} holds if and only if for all $a > 0$
    \[
    \lim_{\alpha\rightarrow 0} \|\xi_\alpha - \xi\|_{-1-\epsilon',\RR^2,p(a)} = 0, \qquad
    \lim_{\alpha\rightarrow 0} \|(\fI\xi_\alpha)\xi_\alpha - C_\alpha - (\fI\xi)\xi \|_{-2\epsilon',\RR^2,p(a)} = 0,
    \]
    where $(\fI\xi_\alpha)\xi_\alpha(x,\bar x) = (\mathcal I \xi_\alpha(\bar x) - \mathcal I \xi_\alpha(x))\xi_\alpha(\bar x)$.
\end{lem}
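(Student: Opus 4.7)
The first equivalence, regarding the convergence of $\xi_\alpha-\xi$, is immediate from the preceding lemma applied with regularity $-1-\epsilon'$ and weight $\theta=p(a)$, which gives $\|\cdot\|_{B^{-1-\epsilon'}_{\infty,\infty}(\RR^2,p(a))}\simeq\|\cdot\|_{-1-\epsilon',\RR^2,p(a)}$ for any ultra-distribution.

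For the second equivalence, the plan is to reduce the two-variable object $(\fI\xi)\xi$ to a sum of one-variable distributions whose regularity is already understood. First I would record the identity
\[
    \int\bigl((\fI\xi_\alpha)\xi_\alpha-C_\alpha\bigr)(x,\bar x)\,\Psi^\delta(x-\bar x)\,d\bar x = (\fI\xi_\alpha\cdot\xi_\alpha-C_\alpha)_\delta(x)-\fI\xi_\alpha(x)\,(\xi_\alpha)_\delta(x),
\]
obtained by expanding the definition (for smooth $\xi_\alpha$) and using $\int\Psi^\delta=1$. Then Bony's decomposition $\fI\xi_\alpha\cdot\xi_\alpha=\fI\xi_\alpha\para\xi_\alpha+\fI\xi_\alpha\reso\xi_\alpha+\xi_\alpha\para\fI\xi_\alpha$ rewrites this as the sum of three terms:
\[
    (\fI\xi_\alpha\reso\xi_\alpha-C_\alpha)_\delta(x) + (\xi_\alpha\para\fI\xi_\alpha)_\delta(x) + \bigl[(\fI\xi_\alpha\para\xi_\alpha)_\delta(x)-\fI\xi_\alpha(x)(\xi_\alpha)_\delta(x)\bigr].
\]

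I would then bound these three contributions separately. The first is precisely the quantity appearing on the left of the second claimed equivalence, and its local norm coincides with its Besov norm by the preceding lemma. The second is a standard paraproduct estimate in weighted Besov spaces from \cite{martin2019paracontrolled}, giving $\|\xi_\alpha\para\fI\xi_\alpha\|_{B^{-2\epsilon'}_{\infty,\infty}(p(a))}\lesssim\|\xi_\alpha\|_{B^{-1-\epsilon'}_{\infty,\infty}(p(a/2))}\|\fI\xi_\alpha\|_{B^{1-\epsilon'}_{\infty,\infty}(p(a/2))}$, with the last factor controlled by $\|\xi_\alpha\|_{B^{-1-\epsilon'}_{\infty,\infty}(p(a/2))}$ via the two-derivative smoothing of $\fI$. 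The third (commutator) is the classical Bony commutator: writing $\fI\xi_\alpha\para\xi_\alpha=\sum_j S_{j-2}\fI\xi_\alpha\cdot\Delta_j\xi_\alpha$, the commutator takes the form $\sum_j\int(S_{j-2}\fI\xi_\alpha(\bar x)-\fI\xi_\alpha(x))\Delta_j\xi_\alpha(\bar x)\Psi^\delta(x-\bar x)\,d\bar x$; using the Hölder regularity of $\fI\xi_\alpha$ (gaining a factor $2^{-j(1-\epsilon')}+\delta^{1-\epsilon'}$) together with the Schwartz decay of $\fF\Psi^\delta$ (which effectively restricts the sum to $2^j\lesssim\delta^{-1}$) and summing geometrically yields a bound by $\delta^{-2\epsilon'}\|\fI\xi_\alpha\|_{B^{1-\epsilon'}}\|\xi_\alpha\|_{B^{-1-\epsilon'}}$. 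Since all three estimates are bilinear, applying the decomposition to the differences $\xi_\alpha-\xi$ and $\fI\xi_\alpha-\fI\xi$ shows that convergence of $\fI\xi_\alpha\reso\xi_\alpha-C_\alpha$ in one norm is equivalent to convergence of $(\fI\xi_\alpha)\xi_\alpha-C_\alpha$ in the other, as claimed. The main technical obstacle is the weighted commutator estimate, where the weight $p(a)$ must be tracked carefully through the Littlewood--Paley expansion; this parallels the paraproduct calculations in \cite{martin2019paracontrolled}.
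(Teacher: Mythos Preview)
Your proof is correct and follows the same overall plan as the paper: both reduce the two-variable norm of $(\fI\xi_\alpha)\xi_\alpha-C_\alpha$ via Bony's decomposition into the resonant piece $(\fI\xi_\alpha\reso\xi_\alpha-C_\alpha)_\delta$, the harmless paraproduct $(\xi_\alpha\para\fI\xi_\alpha)_\delta$, and the commutator $(\fI\xi_\alpha\para\xi_\alpha)_\delta(x)-\fI\xi_\alpha(x)(\xi_\alpha)_\delta(x)$, then use the equivalence of norms for the first and the weighted paraproduct estimate for the second.

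The only substantive difference is in how the commutator is controlled. The paper interprets $x\mapsto\fI\xi_\alpha(x)\Xi$ as a modelled distribution of negative regularity in the PAM regularity structure and invokes Theorem~6.10 of \cite{gubinelli2015paracontrolled}, which identifies its reconstruction with the paraproduct $\fI\xi_\alpha\para\xi_\alpha$; the reconstruction bound then gives the required $\delta^{-2\epsilon'}$ estimate with continuity in the model. You instead carry out the Littlewood--Paley computation by hand. Your route is more elementary and self-contained, while the paper's is shorter once the reconstruction machinery is taken for granted. One small point: your phrase ``Schwartz decay of $\fF\Psi^\delta$ effectively restricts the sum to $2^j\lesssim\delta^{-1}$'' is correct but deserves one more sentence, since it is the \emph{product} $(S_{j-2}\fI\xi_\alpha-\fI\xi_\alpha(x))\Delta_j\xi_\alpha$ that is being tested against $\Psi^\delta$; the argument goes through because $S_{j-2}\fI\xi_\alpha$ (and the constant $\fI\xi_\alpha(x)$) have Fourier support in $\{|k|\lesssim 2^{j}\}$, so the product retains Fourier support near scale $2^j$ and the decay of $\fF\Psi^\delta$ indeed applies.
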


\begin{proof}
    By the equivalence of our two definitions of Besov norms we immediately get $\lim_{\alpha\rightarrow 0} \|\xi_\alpha - \xi\|_{B^{-1-\epsilon'}_{\infty,\infty}(\RR^2,p(a))} = 0$ if and only if $\lim_{\alpha\rightarrow 0} \|\xi_\alpha - \xi\|_{-1-\epsilon',\RR^2,p(a)} = 0$.

    For the second order term we first suppose that Assumption 3.1 holds. We have the decomposition 
    \begin{eqnarray*}
        &&(\fI\xi_\alpha(\bar{x})-\fI\xi_\alpha(x))\xi_\alpha(\bar{x})-C_\alpha \\
        &=& (\fI\xi_\alpha\para\xi_\alpha)(\bar{x}) + (\fI\xi_\alpha\odot\xi_\alpha)(\bar{x}) + (\xi_\alpha\para\fI\xi_\alpha)(\bar{x}) - \fI\xi_\alpha(x)\xi_\alpha(\bar{x}) - C_\alpha 
    \end{eqnarray*}
    From the theory of paracontrolled products, see e.g. \cite[Lemma~4.2]{martin2019paracontrolled}, we know that $\xi_\alpha\para\fI\xi_\alpha$ converges in $B^{-2\epsilon'}_{\infty, \infty}(\RR^d, \theta)$. Together with the equivalence of the two Besov norms, this shows that
    \[
        \sup_{x\in\RR^2}\sup_{\delta\in(0,1]}\left| p_a(x)^{-1}\int(\fI\xi_\alpha\odot\xi_\alpha - C_\alpha + \xi_\alpha\para\fI\xi_\alpha - \fI\xi\odot\xi - \xi\para\fI\xi)(\bar{x})\Psi^\delta(x-\bar{x})d\bar{x}\right|\delta^{2\epsilon'}
    \] 
    converges to $0$ as $\alpha \to 0$. It remains to check that
    \begin{equation}\label{equ.para_est}
        \sup_{x\in\RR^2}\sup_{\delta\in(0,1]}\left|\int(\fI\xi_\alpha\para\xi_\alpha(\bar{x})-\fI\xi_\alpha(x)\xi_\alpha(\bar{x}) - (\fI\xi\para\xi(\bar{x})-\fI\xi(x)\xi(\bar{x})))\Psi^\delta(x-\bar{x})d\bar{x}\right|\delta^{2\epsilon'}
    \end{equation}
    converges to $0$.
    We now consider modelled distributions $f^{\pi}(x)=\fI\xi(x)\Xi$ resp. $f^{\pi}(x)=\fI\xi_\alpha(x)\Xi$ in the regularity structure of the (regularized) PAM such that $\Pi_x\Xi = \xi$ resp. $\Pi_x \Xi = \xi_\alpha$, see \cite{hairer2014theory} or Section~6 of~\cite{gubinelli2015paracontrolled} for more details. Since the regularity of the modelled distribution is smaller than 0, Theorem 6.10 in \cite{gubinelli2015paracontrolled} shows that the reconstruction operator can be taken as the paraproduct $\para$. Thus, the convergence in \eqref{equ.para_est} holds. The converse direction follows from similar  arguments, so we have the equivalence.
\end{proof}

    Assumption~\ref{ass.rsbm} is an alternative version of Assumption 2.3 (``Deterministic Environment'') in \cite{perkowski2021rough}, which guarantees the well-posedness of the PAM and hence the existence of the rough super-Brownian motion. The assumption is satisfied for almost all sample paths of the space white noise on $\RR^2$, see \cite{perkowski2021rough}.

Let $\mathcal M(\RR^2)$ be the space of finite positive Borel measures on $\RR^2$, equipped with the topology of weak convergence.

\begin{definition}\label{def.rsbm}
	Let $\xi$ satisfy Assumption \ref{ass.rsbm}. Let $\kappa > 0$ and let $\mu$ be a process with values in the space $C([0,\infty), \mathcal{M}(\mathbb{R}^2))$, such that $\mu(0)$ is compact supported. Write $\mathcal{F} = \{\mathcal{F}_t\}_{t\in[0,\infty)]}$ for the completed and right-continuous filtration generated by $\mu$. We call $\mu$ a rough super-Brownian motion(rSBM) with parameter $\kappa$ if it satisfies one of the two following equivalent properties: 
    \begin{enumerate}
        \item 	For any $t\geq 0$ and $\varphi_0 \in C_c^{\infty}(\mathbb{R}^2), \varphi_0\geq 0$ and for $U_{.}\varphi_0$ the solution to equation (\ref{equ.PAM+square}) with initial condition $\varphi_0$, the process 
	\[
		N_t^{\varphi_0}(s) = e^{-\langle\mu(s),U_{t-s}\varphi_0\rangle}, \qquad s \in [0,t]
	\]
	is a bounded continuous $\mathcal{F}-martingale$.
	\item For any $t\geq 0$ and $\varphi_0\in C_c^{\infty}(\mathbb{R}^2)$ and $f \in C([0,t]; C^{\zeta}(\mathbb{R}^2, e(l)))$ for some $\zeta > 0$ and $l < -t$, and for $\varphi_t$ solving 
	\begin{equation}\label{inverse.pam.f}
	    \partial_s\varphi_t + \mathcal{H}\varphi_t = f, \qquad s \in [0,t], \qquad \varphi_t(t) = \varphi_0.
	\end{equation}
	it holds that 
	\[
	    s\rightarrow M_t^{\varphi_0,f} := \langle\mu(s),\varphi_t(s)\rangle - \langle\mu(0),\varphi_t(0)\rangle - \int_0^s dr\langle\mu(r),f(r)\rangle,
	\]
	defined for $s \in [0,t]$, is a continuous square-integrable $\mathcal{F}-$martingale with quadratic variation 
	\[
	    \langle M_t^{\varphi_0,f}\rangle_s = \kappa\int_0^s dr\langle \mu(r),(\varphi_t)^2(r)\rangle.
	\]
    \end{enumerate}
\end{definition}

In order to discuss the compact support property of the rough super-Brownian motion, we will need to generalize the first definition of the process.
\begin{lem}
    The definition of the rough super-Brownian motion is equivalent to the following property: For any $t\geq 0$ and $\varphi_0 \in C_c^{\infty}(\mathbb{R}^2),\phi\in C_c^{\infty}(\RR^2), \varphi_0,\phi\geq 0$
    and for $U_{.}^{\phi}\varphi_0$ the solution to equation (\ref{equ.PAM+square+phi}) with initial condition $\varphi_0$, the process 
	\[
		N_t^{\varphi_0,\phi}(s) = e^{-\langle\mu(s),U^{\phi}_{t-s}\varphi_0\rangle -\int_0^sdr\langle\mu(r),\phi\rangle}, \qquad s \in [0,t]
	\]
	is a bounded continuous $\mathcal{F}-martingale$. In particular, the rough super-Brownian motion satisfies
    \begin{equation}\label{eq.rsbm_exp_equ}
        \EE\left[e^{-\langle\mu(t),\varphi_0\rangle-\int_0^t\langle\mu(r),\phi\rangle dr}\right] = e^{-\langle\mu(0),U_t^\phi\varphi_0\rangle}.
    \end{equation}
\end{lem}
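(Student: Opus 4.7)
\medskip

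\noindent\textbf{Proof plan.} The plan is to reduce the claim to the second characterization of the rSBM in Definition~\ref{def.rsbm}, that is, to the martingale problem formulation, and then apply It\^o's formula to an exponential functional. Throughout fix $t\geq 0$, $\varphi_0,\phi\in C_c^\infty(\RR^2)$ non-negative, and set $v(s,\cdot):=U^\phi_{t-s}\varphi_0$ for $s\in[0,t]$. Differentiating in $s$ and using \eqref{equ.PAM+square+phi}, one checks at once that
\[
    \partial_s v(s,\cdot) + \fH v(s,\cdot) \;=\; \tfrac{\kappa}{2}v(s,\cdot)^2 \;-\; \phi,\qquad v(t,\cdot)=\varphi_0,
\]
so $v$ solves the backward equation \eqref{inverse.pam.f} with source $f(s,\cdot):=\tfrac{\kappa}{2}v(s,\cdot)^2-\phi$.

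First I would verify that this choice of $f$ meets the regularity hypothesis of Definition~\ref{def.rsbm}(2), namely $f\in C([0,t];C^\zeta(\RR^2,e(l)))$ for some $\zeta>0$ and $l<-t$. This uses the solution theory of \eqref{equ.PAM+square+phi} developed in Section~\ref{sec.proof_of_main_thm}: because $\varphi_0,\phi\in C_c^\infty$, the solution $U^\phi_\cdot\varphi_0$ enjoys enough spatial regularity and appropriate exponential decay (in fact $v$ is continuous with values in a weighted H\"older space), so that squaring and subtracting $\phi\in C_c^\infty$ stays in the required class. Then Definition~\ref{def.rsbm}(2) applies with $\varphi_t=v$ and this choice of $f$, producing a continuous square-integrable $\fF$-martingale
\[
    M_s \;=\; \langle\mu(s),v(s)\rangle - \langle\mu(0),v(0)\rangle - \int_0^s\!\langle\mu(r),\tfrac{\kappa}{2}v^2(r)-\phi\rangle\,dr
\]
with quadratic variation $\langle M\rangle_s = \kappa\int_0^s\langle\mu(r),v^2(r)\rangle dr$.

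Next I would apply It\^o's formula to $F(x,y)=e^{-x-y}$ along $X_s=\langle\mu(s),v(s)\rangle$ and the absolutely continuous process $Y_s=\int_0^s\langle\mu(r),\phi\rangle dr$. Using the semimartingale decomposition from step above, the drift becomes
\[
    -e^{-X_s-Y_s}\Bigl[\langle\mu(s),\tfrac{\kappa}{2}v^2(s)-\phi\rangle + \langle\mu(s),\phi\rangle\Bigr]\,ds \;+\; \tfrac{1}{2}e^{-X_s-Y_s}\,\kappa\langle\mu(s),v^2(s)\rangle\,ds,
\]
which cancels identically. Hence $N_t^{\varphi_0,\phi}(s) = e^{-X_s-Y_s}$ is a local martingale, and because $v\geq 0$ and $\phi\geq 0$ give $0\leq N_t^{\varphi_0,\phi}(s)\leq 1$, it is a bounded continuous martingale. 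Conversely, the new property trivially reduces to Definition~\ref{def.rsbm}(1) by setting $\phi\equiv 0$, so the two are equivalent. Finally, the identity \eqref{eq.rsbm_exp_equ} follows from the martingale property by comparing $N_t^{\varphi_0,\phi}(t)=e^{-\langle\mu(t),\varphi_0\rangle - \int_0^t\langle\mu(r),\phi\rangle dr}$ with $N_t^{\varphi_0,\phi}(0)=e^{-\langle\mu(0),U^\phi_t\varphi_0\rangle}$ and taking (conditional) expectation given $\mu(0)$.

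The main obstacle is the step that verifies $v$ and $v^2$ lie in a function space where both the backward martingale problem of Definition~\ref{def.rsbm}(2) and the It\^o argument are meaningful; this is not a routine PDE fact since $\fH=\Delta+\xi$ involves the singular product with white noise, and it requires invoking the paracontrolled/regularity-structure solution theory for \eqref{equ.PAM+square+phi} together with sharp enough decay in the weights (the condition $l<-t$ on the exponential weight) so that $\tfrac{\kappa}{2}v^2-\phi$ falls into $C([0,t];C^\zeta(\RR^2,e(l)))$. Once that input is in hand, the It\^o computation and drift cancellation are straightforward.
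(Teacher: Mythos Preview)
Your proposal is correct and follows essentially the same route as the paper: set $\varphi_t(s)=U^\phi_{t-s}\varphi_0$, observe it solves the backward equation \eqref{inverse.pam.f} with $f=\tfrac{\kappa}{2}\varphi_t^2-\phi$, invoke characterization (2) of Definition~\ref{def.rsbm}, and apply It\^o's formula to the exponential so that the drift cancels against the quadratic variation, leaving a bounded local martingale which is therefore a true martingale. The only difference is cosmetic (you apply It\^o to $e^{-x-y}$ in two variables while the paper applies it to $e^{-x}$ with $x$ the sum), and you are more explicit than the paper about the regularity check $f\in C([0,t];C^\zeta(\RR^2,e(l)))$, which the paper takes as understood from the solution theory in Section~\ref{sec.proof_of_main_thm}.
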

\begin{proof}
    Clearly the condition is sufficient, because it is stronger than our first characterization of the rough super-Brownian motion. To see that it is also necessary, consider $\varphi_t(s) = U_{t-s}^\phi\varphi_0$ with time independent function $\phi \in C^\infty_c(\RR^d)$. Then $\varphi_t$ satisfies equation (\ref{inverse.pam.f}) with $f = \frac{\kappa}{2}(\varphi_t)^2 - \phi$. Hence, It\^o's formula applied to $F = e^{-x}\in C^2(\RR^d)$ yields
    \begin{eqnarray*}
        N_t^{\varphi_0,\phi}(s) &=& F\left(\langle\mu(s),\varphi_t(s)\rangle + \int_0^sdr\langle\mu(r),\phi\rangle\right) \\
        &=& N_t^{\varphi_0,\phi}(0) - \int_0^s N_t^{\varphi_0,\phi}(r)\langle\mu(r),\frac{\kappa}{2}(\varphi_t)^2(r)\rangle dr\\
        &+& \frac{1}{2}\int_0^s N_t^{\varphi_0,\phi}(r)d\langle M_t^{\varphi_0,f}\rangle(r) -\int_0^sN_t^{\varphi_0,\phi}(r)dM_t^{\varphi_0,f}(r).
    \end{eqnarray*}
    Now since
    \[
        \langle M_t^{\varphi_0,f}\rangle_s = \kappa\int_0^s dr\langle\mu(r),(\varphi_t)^2(r)\rangle,
    \]
    the drift terms vanish and $N^{\varphi_0, \phi}$ is a local martingale. As $N^{\varphi_0,\phi}$ is bounded it is a true martingale.
\end{proof}

The compact support property of a measure-valued process is formulated in the following definition.
\begin{definition}\label{def.compact_support_property}
    Suppose $\mu$ is a stochastic process with values in $C(\RR_+,\mathcal{M}(\RR^2))$. We say that $\mu$ possesses the compact support property if for all $t\ge 0$
    \[
        \PP\left[\left(\bigcup_{0\leq s \leq t}\operatorname{supp} \mu(s)\right) \text{ is bounded}\right] = 1.
    \]
\end{definition}
Now we are ready to state our main result.
\begin{theorem}\label{thm.rsbm_compact_support_property}
    The rough super-Brownian motion on $\RR^2$ possesses the compact support property.
\end{theorem}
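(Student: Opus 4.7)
The plan is the classical superprocess strategy — log-Laplace duality reduces the compact support property to a PDE statement — closed via the interior-estimate method of \cite{moinat2020space,moinat2020local,chandra2023priori}. Fix $R_0$ with $\operatorname{supp}\mu(0) \subset B(0,R_0)$. For $R > R_0$ pick a smooth cut-off $\chi_R$ vanishing on $B(0, R-1)$ and equal to $1$ outside $B(0, R)$, and set $v^{R, M} := U_t^{M\chi_R} 0$. Applying \eqref{eq.rsbm_exp_equ} to $M\chi_R$ (after a compactly supported approximation and a monotone limit) yields
\[
    \EE\!\left[\exp\!\left(-M\int_0^t \langle\mu(s),\chi_R\rangle\,ds\right)\right] = e^{-\langle\mu(0), v^{R, M}(t, \cdot)\rangle}.
\]
By the comparison principle $M \mapsto v^{R, M}$ is increasing; letting $M \to \infty$ and using weak continuity of $s \mapsto \mu(s)$, the left-hand side becomes $\PP[\operatorname{supp}\mu(s) \subset \overline{B(0, R-1)} \text{ for all } s \le t]$. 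It therefore suffices to prove
\[
    \lim_{R\to\infty}\sup_{x \in B(0,R_0)}v^{R, \infty}(t, x) = 0, \qquad v^{R,\infty} := \lim_{M \to \infty} v^{R, M}.
\]

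On $B(0, R-1)$, $v^{R, M}$ solves $\partial_t v = \fH v - \frac{\kappa}{2}v^2$ with zero initial data, and all of its mass comes in through the coupling with the exterior via $\Delta$. The coercivity of $-\frac{\kappa}{2}v^2$ combined with the paracontrolled structure of $\fH = \Delta + \xi$ should allow us to adapt the Moinat-Weber iteration on shrinking parabolic cylinders to obtain a Keller-Osserman type interior bound $v^{R, M}(t, x) \le F_{t}(\operatorname{dist}(x, \partial B(0, R-1)))$, uniform in $M$ and with constants depending only on the local noise norms of Section~\ref{sec:notation-regularity}. Given such a bound, parabolic Schauder-type estimates for $\fH$ produce subsequential limits $v^{R, \infty}$ that are mild solutions of the same equation on $B(0, R-1)$ with the boundary blow-up absorbed by a weight $\theta_R$; uniqueness of mild solutions in $\fL^\alpha_T(B(0, R-1), \theta_R)$ — a standard Picard argument using the local Lipschitz/monotonicity of $v \mapsto v^2$ on $[0, \infty)$ — pins down $v^{R, \infty}$. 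Extending $v^{R, \infty}$ by $+\infty$ outside $B(0, R-1)$, the comparison principle makes $R \mapsto v^{R, \infty}$ decreasing on any fixed compact set, so $v^{R, \infty}(t, x) \searrow v^\infty(t, x) \ge 0$. The interior bound survives the limit and places $v^\infty$ in a weighted space $\fL^\alpha_T(\RR^2, p(a))$ for some $a>0$, in which it is a mild solution of $\partial_t v = \fH v - \frac{\kappa}{2}v^2$ with $v(0, \cdot) = 0$ on all of $\RR^2$; uniqueness in this weighted class and the obvious solution $v \equiv 0$ then force $v^\infty \equiv 0$, giving the required convergence.

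The hard part is the interior estimate itself. The Moinat-Weber framework was built for the elliptic $\Phi^4_3$ equation with coercive cubic nonlinearity, and transferring it to our setting requires redoing the nested-cylinder iteration under parabolic scaling with quadratic rather than cubic damping, handling $\fH = \Delta + \xi$ via paracontrolled decompositions compatible with the local noise norms \eqref{def.xi_norm}--\eqref{def.enhanced_xi_norm}, and tracking all constants uniformly in the forcing $M\chi_R$ so that both the bound and the associated weight $\theta_R$ are good enough to survive the $M \to \infty$ and $R \to \infty$ limits. Secondary but still nontrivial issues are the monotonicity and continuity of $\phi \mapsto U^\phi_t 0$ needed for the log-Laplace step (which, because $U^\phi_t 0$ is obtained via a paracontrolled solution theory rather than classical PDE arguments, requires some care), and a convenient weighted $L^\infty$ uniqueness result for the limiting PAM-with-quadratic-sink equation on $\RR^2$ needed to identify $v^\infty$ as $0$.
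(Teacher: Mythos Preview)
Your proposal is correct and matches the paper's approach: log-Laplace reduction to a PDE statement, Moinat--Weber-type interior estimates for the nonlinear PAM on the region where the forcing vanishes, compactness to pass to limits, and uniqueness of the zero mild solution in a polynomially weighted space on $\RR^2$. The paper's only notable simplifications over your sketch are that it takes the forcing $\phi_n^m$ compactly supported from the outset (zero on $\fP{n}$, equal to $m$ on an annulus, and zero outside $\fP{n+2}$), so the solution theory applies directly and no approximation of $M\chi_R$ is needed, and that it localizes the solution by multiplying with a smooth cutoff $\eta_n$ to obtain a mild formulation on all of $\RR^2$, which sidesteps the domain/boundary issues you allude to when speaking of uniqueness in $\fL^\alpha_T(B(0,R-1),\theta_R)$.
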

The proof is given in Section \ref{sec.proof_of_main_thm}. Let us formulate several necessary steps now, following Engl\"ander and Pinsky \cite{englander1999construction, englander2006compact}. Recall that $\fP{n} = [-n,n]^2$ and consider regions $\fP{n}^m := \left(-n-\frac{1}{m},n+\frac{1}{m}\right)^2$. Let $\phi^n_m \in C^\infty_c$ be such that 
        \[
            \phi_n^m(x)  =
                \begin{cases}
                     0, & x \in \fP{n}\cup\fP{n+2}^c, \\
                     m, & x \in (\fP{n}^m)^c\cap\fP{n+1},\\
                     \phi_n^m(x) \in [0,m], & \text{elsewhere.}
                \end{cases}
        \]
        Then consider $\varphi_n^m(t):= U_t^{\phi_n^m}0$, where $0$ is the function with value $0$ everywhere. Define $A_t^n:=\{\mu(s)((\fP{n})^c) = 0, s\leq t\}$. Since the rough super-Brownian motion has continuous trajectories in $\mathcal M(\RR^2)$ it follows from \eqref{eq.rsbm_exp_equ} that
        \begin{eqnarray*}
            \PP\left[A_t^n\right] &=& \lim_{m\rightarrow\infty}\EE\left[\exp\left(-\int_0^t \langle\mu(s),\phi_n^m\rangle ds\right)\right]\\
            &=& \lim_{m\rightarrow\infty}\exp\left(-\langle\mu(0),\varphi_n^m(t)\rangle\right).
        \end{eqnarray*}
        Hence 
        \begin{align*}
            \MoveEqLeft
           \PP\left[\left(\bigcup_{0\leq s \leq t} \operatorname{supp} \mu(s)\right) \text{ is bounded}\right] \\
           &= \lim_{n\rightarrow\infty} \PP[A_t^n] =\lim_{n\rightarrow\infty}\lim_{m\rightarrow\infty}\exp\left(-\langle\mu(0),\varphi_n^m(t)\rangle\right).
        \end{align*}
    Thus, our goal is to show that $\lim_n \lim_m \varphi^m_n = 0$. In \cite{englander1999construction} this is based on explicit supersolutions, which works because they consider bounded $\xi$. In our case we do not know any explicit supersolutions, and instead we will derive nonlinear interior estimates for~\eqref{equ.PAM+square+phi} by adapting the methods of~\cite{moinat2020local} for $\Phi^4_3$  to our setting. We leave the discussion of the well-posedness of~\eqref{equ.PAM+square+phi} to Section \ref{sec.proof_of_main_thm} and discuss the interior estimates first.

\section{Interior estimates}
Here we consider positive solutions to the equation
\begin{equation}\label{equ.renormalized.PAM+square}
        \left\{
        \begin{array}{cc}
            \partial_t u  = (\Delta + \xi - C)u - \frac{\kappa}{2}u^2, &  \text{ in }\mathbb{R}_+\times \fP{n},\\
             u(0,\cdot) = 0,& \text{ in } \{0\} \times \fP{n},
        \end{array}
    \right.
\end{equation}
where $\xi$ is a typical realization of a mollified white noise on $\RR^2$, $C$ is a renormalization constant, and we do not specify any boundary conditions on $\partial \fP{n}$. We will use the nonlinearity $-\frac{\kappa}{2} u^2$ to derive uniform bounds for $u$ on the interior of $\fP{n}$ that only depend on distributional norms of $\xi$ and $(\mathcal I \xi) \xi -C$ but not on the mollification. Most ideas in this section come from the works of Chandra, Moinat and Weber, \cite{moinat2020local,chandra2023priori,moinat2020space}, with some changes to enable estimations near time $0$. We define the two-variable functions
\begin{equation}\label{def.U}
    U(z,\bar{z}):= u(\bar{z}) - u(z) - u(z)(\fI\xi(\bar{x})-\fI\xi(x)),
\end{equation}
and 
\begin{equation}\label{def.bar_U}
    \bar{U}(z,\bar{z}) := U(z,\bar{z}) - \nu(z)(\bar{x} - x),
\end{equation}
where $\nu(z)$ is the spatial derivative of $U(z,\cdot)$ at $z$. Note that we are working with regularized noise, so all the functions we encounter are smooth and the derivative $\nu(z)$ exists. Of course, our goal is to derive estimates that are uniform in the mollification parameter.

The main estimate in this section is:
	\begin{theorem}\label{thm.local_est.renormalized.PAM+square}
	    Let $n\in\mathbb{N}$, $0 < l < n$. If $u$ solves equation (\ref{equ.renormalized.PAM+square}) in $[0,T]\times \fP{n}$, then we have:
        \begin{equation}\label{equ.est.PAM+square}
            \|u\|_{C_T\fP{n-l}} \lesssim \max\left\{\frac{1}{l^2}, \|\tau\|_{n,|\tau|}^{\frac{2}{n_\tau(1-\epsilon)}}: \tau \in \mathcal T\right\}
        \end{equation}
        for $\mathcal T = \{\xi, (\fI\xi)\xi, \xi X\}$ and $\fP{n-l} = [-n+l,n-l]^2$. Here $|\tau|$ is the regularity of $\tau$ and $n_\tau$ is the number of nodes in the tree. The implicit constant in ``$\lesssim$'' only depends on $\kappa,\Psi$ and $\epsilon$.
	\end{theorem}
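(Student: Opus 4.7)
The strategy follows the scheme of Chandra, Moinat and Weber~\cite{chandra2023priori, moinat2020local, moinat2020space} for $\Phi^4_3$, adapted to the two-dimensional quadratic setting and with the extra requirement that we reach the initial time $t=0$. The key mechanism is that the damping term $-\tfrac{\kappa}{2}u^2$ produces a coming-down-from-infinity effect which, combined with the two-scale analysis through $U$ and $\bar U$, allows one to absorb the singular right-hand side into a quantity depending only on local noise norms and the distance to the lateral parabolic boundary.

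Concretely I would introduce a damage functional
\begin{equation*}
    M := \sup_{0<l<n}\ l^{2}\,\|u\|_{C_T\fP{n-l}},
\end{equation*}
together with analogous interior quantities controlling $[\bar U]_{C_T(\alpha,\fP{n-l})}$ and $[\nu]_{C_T(\alpha-1,\fP{n-l})}$ for some $\alpha\in(1,2)$ strictly above the regularity threshold, each with its matching power of $l$. The aim is to derive a self-improving inequality of the shape $M \lesssim 1 + \mathcal{N} + M^{\beta}$ for some $\beta<1$, where $\mathcal{N}$ is a polynomial in the noise norms $\|\xi\|_{n,-1-\epsilon}$, $\|\xi X\|_{n,-\epsilon}$, $\|(\fI\xi)\xi\|_{n,-2\epsilon}$. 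A standard absorption argument then yields \eqref{equ.est.PAM+square}, with the exponents $\frac{2}{n_\tau(1-\epsilon)}$ dictated by the balance between the power of $M$ and the powers of the noise norms, exactly as in~\cite{moinat2020local}.

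The central analytic step is a PDE estimate for $\bar U$. Using \eqref{equ.renormalized.PAM+square} and $-\Delta\fI\xi=\chi(\fD)\xi$, a direct computation gives, in the $\bar x$ variable,
\begin{equation*}
    (\partial_t-\Delta_{\bar x})\bar U(z,\bar z) = -\tfrac{\kappa}{2}u^2(\bar z) + u(\bar z)\xi(\bar x) - u(z)(\chi(\fD)\xi)(\bar x) + \text{lower order},
\end{equation*}
and, after regrouping the ill-defined products in terms of the renormalized distributions $\xi X$ and $(\fI\xi)\xi - C$ (tested against $u(z)$ and $\nu(z)$ via convolutions with $\Psi^\delta$), the right-hand side is expressed through precisely the norms in $\mathcal{N}$. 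Interior parabolic Schauder on cubes of side $l/2$ then produces a bound on $[\bar U]_{C_T(\alpha,\fP{n-l})}$ in terms of $\mathcal{N}$, $\|u\|_{C_T\fP{n-l/2}}$, $[\nu]_{C_T(\alpha-1,\fP{n-l/2})}$ and $\|u^2\|_{C_T\fP{n-l/2}}$, with a loss of a power of $l^{-1}$. Reconstruction via
\begin{equation*}
    u(\bar z)-u(z) = u(z)\bigl(\fI\xi(\bar x)-\fI\xi(x)\bigr) + \nu(z)\cdot(\bar x-x) + \bar U(z,\bar z)
\end{equation*}
then bounds $\|u\|_{C_T\fP{n-l}}$ by $\nu$, $\bar U$ and noise norms, and an interpolation of $\nu$ between the sup norm and $[\bar U]_\alpha$ closes the loop at the expected order.

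The main obstacle is running this argument up to $t=0$, since the quoted works avoid the initial time via a time-localization that we cannot afford here (the application in Section~\ref{sec.proof_of_main_thm} needs the estimate for $t$ as a function only of the data at $t=0$). I would incorporate the initial condition by replacing the parabolic ball $B'(z,R)$ in the Schauder argument with its intersection with $\{t\ge 0\}$, and use that $u(0,\cdot)\equiv 0$ (hence $\bar U(0,\cdot,\cdot)\equiv 0$) so that the heat-kernel representation of $\bar U$ on the truncated ball has a vanishing boundary term at $\{t=0\}$ and produces only contributions of the same $l^{-2}$ type as in the interior case. Feeding this into the damage inequality yields \eqref{equ.est.PAM+square} including the explicit factor $1/l^2$.
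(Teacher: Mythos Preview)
Your high-level outline tracks the Moinat--Weber scheme, but the proposed closure mechanism has a genuine gap: you never explain how the damping $-\tfrac{\kappa}{2}u^2$ actually produces an inequality of the form $M\lesssim 1+\mathcal N+M^\beta$ with $\beta<1$. In the step you describe, $u^2$ enters the Schauder estimate for $\bar U$ as a \emph{source term}, contributing (after the $l$-weighting) something of order $M^2$, not $M^\beta$; and the reconstruction identity you quote controls only increments $u(\bar z)-u(z)$, not $\|u\|$ itself. So the loop as written does not close, and no amount of interpolation between $\|u\|$, $\nu$ and $[\bar U]_\alpha$ manufactures a sublinear power of $M$ out of a genuinely quadratic source.

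The paper's proof supplies the missing mechanism by an explicit \emph{maximum principle} step applied to a spatially mollified version of the equation: writing $(\partial_t-\Delta)u_\delta=-u_\delta^2+g_\delta$ with $g_\delta$ the commutator plus the renormalized product, a barrier argument (Lemma~\ref{lem.general_equ}) gives the pointwise bound $u_\delta\lesssim\max\{d(x,\partial\fP{n})^{-2},\|g_\delta\|^{1/2}\}$. This is where the sign of $-u^2$ is used and where the $1/l^2$ originates. Only \emph{after} that does the Schauder/reconstruction machinery on $U$ enter, to control $g_\delta$ in terms of $\|u\|$ and the noise norms; the resulting circular estimate is then closed not by a sublinear inequality but by a dichotomy (Lemma~\ref{lem.contra.itera.est}): under the hypothesis that the noise norms are small relative to $\|u\|_{C_T\fP{n-r}}$, one obtains $\|u\|_{C_T\fP{n-r-R}}\le\max\{C/R^2,\tfrac12\|u\|_{C_T\fP{n-r}}\}$, and an iteration in $r$ yields the theorem. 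Your treatment of the initial time via $u(0)=0$ is in the right spirit and matches what the paper does in Lemma~\ref{lem.Schauder_est.two_variable}, but to make the argument go through you need to insert the maximum-principle step for the mollified equation and replace the claimed $M^\beta$ closure by the halving/iteration argument.
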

        The proof is inspired by the paper \cite{moinat2020space}. We divide the proof into several lemmas. As a direct consequence, we get a bound for weighted norms of $u$.
	\begin{cor}\label{cor.local_est_weighted.renormalized.PAM+square}
	    Let $r \in \mathbb N$ and let $\theta \in \pmb{\varrho}(\omega)$ be a weight such that $\theta$ is a radial function that is increasing in $|x|$ and such that $\theta(0)\ge 1$. Let $u$ solve the equation (\ref{equ.renormalized.PAM+square}) in $[0,T]\times\fP{r+1}$. Then
        \begin{equation}
            \|u\|_{C_T(\fP{r},\theta^{\frac{2}{1-\epsilon}})} \lesssim 1 + \sum_{\tau \in \mathcal T} \|\tau\|_{r+1,|\tau|,\theta}^{\frac{2}{n_\tau(1-\epsilon)}}.
        \end{equation}
	\end{cor}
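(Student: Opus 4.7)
The plan is to reduce the weighted estimate to Theorem~\ref{thm.local_est.renormalized.PAM+square} by localizing around each point $x_0 \in \fP{r}$ and then exploiting the slow variation of $\theta$ on unit cubes. For every $x_0 \in \fP{r}$ the translated cube $x_0+\fP{1}$ is contained in $\fP{r+1}$, so $u$ still solves~(\ref{equ.renormalized.PAM+square}) on $[0,T]\times(x_0+\fP{1})$. After the change of variables $y=x-x_0$, which leaves $\Delta$, the nonlinearity and all the local seminorms invariant, Theorem~\ref{thm.local_est.renormalized.PAM+square} with $n=1$ and $l=1/2$ applies and gives
\[
|u(t,x_0)| \;\lesssim\; \max\Bigl\{4,\; \|\tau\|_{x_0+\fP{1},|\tau|}^{2/(n_\tau(1-\epsilon))} : \tau\in\mathcal T\Bigr\}.
\]

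Next I would convert these localized seminorms into the weighted global ones appearing in the corollary. For any $x\in x_0+\fP{1}$ we have $|x-x_0|\le \sqrt{2}$, so $\omega(x-x_0)$ is uniformly bounded, and the defining property of $\theta\in\pmb{\varrho}(\omega)$ implies $\theta(x)\lesssim \theta(x_0)$. Inserting this bound into (\ref{def.xi_norm})--(\ref{def.enhanced_xi_norm}) and the analogous definition for $\xi X$ yields
\[
\|\tau\|_{x_0+\fP{1},|\tau|} \;\lesssim\; \theta(x_0)\,\|\tau\|_{r+1,|\tau|,\theta}, \qquad \tau \in \mathcal T.
\]

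To conclude, I would divide the first display by $\theta(x_0)^{2/(1-\epsilon)}$. The hypothesis that $\theta$ is radial, increasing in $|x|$, with $\theta(0)\ge 1$ gives $\theta(x_0)\ge 1$, so the constant term is absorbed, and for each tree
\[
\theta(x_0)^{-2/(1-\epsilon)}\bigl(\theta(x_0)\,\|\tau\|_{r+1,|\tau|,\theta}\bigr)^{2/(n_\tau(1-\epsilon))} \;=\; \theta(x_0)^{2(1-n_\tau)/(n_\tau(1-\epsilon))}\, \|\tau\|_{r+1,|\tau|,\theta}^{2/(n_\tau(1-\epsilon))},
\]
whose prefactor is at most $1$ because $n_\tau\ge 1$. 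Taking the supremum over $(t,x_0)\in[0,T]\times\fP{r}$ and summing over $\tau$ yields the stated inequality.

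The only mildly delicate point is the first step: Theorem~\ref{thm.local_est.renormalized.PAM+square} is stated for cubes centred at the origin, so applying it on $x_0+\fP{1}$ requires invoking translation invariance of the whole argument. This should be transparent once the proof of Theorem~\ref{thm.local_est.renormalized.PAM+square} is carried out, since neither the equation~(\ref{equ.renormalized.PAM+square}), the distributions $\xi,(\fI\xi)\xi,\xi X$, nor the seminorms (\ref{def.xi_norm})--(\ref{def.enhanced_xi_norm}) single out the origin, but it is the one structural detail that needs to be flagged.
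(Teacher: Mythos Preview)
Your proof is correct and follows essentially the same mechanism as the paper's: apply Theorem~\ref{thm.local_est.renormalized.PAM+square} locally, use that $\theta$ varies only by a bounded factor over unit scales (the defining property of $\pmb{\varrho}(\omega)$) to replace local unweighted seminorms by global weighted ones, and then absorb the leftover power of $\theta(x_0)$ using $\theta\ge 1$ and $n_\tau\ge 1$.

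The only cosmetic difference is the localization scheme. The paper applies Theorem~\ref{thm.local_est.renormalized.PAM+square} on the full cubes $\fP{r'+1}$ for varying $r'$ and then pieces the weighted supremum together over annuli $\fP{r'}\setminus\fP{r'-1}$, whereas you translate a unit cube to each point $x_0\in\fP{r}$ and read off the value at the centre. Your pointwise version is arguably cleaner and makes the role of $\theta(x_0)\ge 1$ and $n_\tau\ge 1$ more explicit; the paper's annular version avoids having to invoke translation invariance of Theorem~\ref{thm.local_est.renormalized.PAM+square}. Either way the content is the same, and the translation-invariance point you flag is indeed harmless since every ingredient of the proof of Theorem~\ref{thm.local_est.renormalized.PAM+square} is manifestly invariant under spatial shifts.
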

 \begin{proof}
     We take $n=r+1$ and $l=1$ in the inequality \eqref{equ.est.PAM+square}, so that
     \[
        \|u\|_{C_T\fP{r}} \lesssim \max\left\{1, \|\tau\|_{r+1,|\tau|}^{\frac{2}{n_\tau(1-\epsilon)}}: \tau \in \mathcal T\right\}.
     \]
     Then we take the $\theta^{\frac{2}{1-\epsilon}}$ weighted supremum of $u$ between the radius $r$ and $r+1$, which is controlled by 
     \[
        e^{\frac{2}{1-\epsilon}\lambda\omega(1)}\max\left\{\theta(r+1)^{-1}, \theta(r+1)^{-\frac{2}{n_\tau(1-\epsilon)}}\|\tau\|_{r+1,|\tau|}^{\frac{2}{n_\tau(1-\epsilon)}}: \tau \in \mathcal T\right\}.
     \]
     Since $\theta$ is increasing, we obtain the result.
 \end{proof}
	
 In the proof of Theorem \ref{thm.local_est.renormalized.PAM+square} we will not only derive bounds on norms of $u$, but also bounds on norms of related functions such as $U,\nu$ (see for example \eqref{equ.U_Holder_est}, which is true without Assumption \eqref{equ.ass}, and choose carefully the quantity $d_0$ to get an estimate on $[U]_{C_T(2-2\epsilon, D_d,d)}$). Moreover, we can also control the regularity of $u$, $\nu$ and $U$, see Corollary~\ref{cor.est_all} below.
 
	The proof of Theorem \ref{thm.local_est.renormalized.PAM+square} will be based on a series of lemmas whose proofs are in Appendix~\ref{app}. To start with, we give an interior supremum norm estimate for a simplified equation. 

	\begin{lem}\label{lem.general_equ}
		Let $T>0$ and let $u\in C^\infty$ solve
  	     \begin{equation}\label{equ.general}
		  \left\{
			\begin{array}{ll}
				(\partial_t - \Delta)u = - u^2 + g & \text{ in } [0,T]\times \fP{n}, \\
				u = 0 & \text{ in } \{0\}\times \fP{n}, \\
				u\geq 0,
			\end{array}
		  \right.
	   \end{equation}
	where $g$ is a smooth and bounded function. Then the following point-wise bound on $u$ holds for all $z = (t,x) \in [0,T)\times \fP{n}$:
		\begin{equation}
			u(t,x) \leq 28\cdot \max\left\{\frac{1}{\min\{(n-x_i)^2,(n+x_i)^2, i = 1,2\}},\sqrt{\|g\|_{C_T \fP{n}}}\right\}.
		\end{equation}
	\end{lem}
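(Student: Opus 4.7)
The plan is a barrier-plus-comparison argument: construct an explicit time-independent supersolution $v$ on $\fP{n}$ that blows up on $\partial\fP{n}$ and dominates $g$, and then invoke parabolic comparison. Since no boundary condition is imposed on $u$ along $\partial\fP{n}$, the blow-up of $v$ there is essential — it removes the need for any a priori boundary information on $u$.

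Concretely, I would take
\[
v(x) := \sum_{i=1}^2 \frac{A}{(n-x_i)^2} + \sum_{i=1}^2 \frac{A}{(n+x_i)^2} + B\sqrt{\|g\|_{C_T\fP{n}}},
\]
with constants $A,B>0$ to be chosen. A direct computation yields
\[
\Delta v = \sum_{i=1}^2 \left[\frac{6A}{(n-x_i)^4} + \frac{6A}{(n+x_i)^4}\right],
\]
while the elementary inequality $(\sum_k a_k)^2 \ge \sum_k a_k^2$ (valid because every summand of $v$ is non-negative) gives
\[
v^2 \ge \sum_{i=1}^2 \left[\frac{A^2}{(n-x_i)^4} + \frac{A^2}{(n+x_i)^4}\right] + B^2 \|g\|_{C_T\fP{n}}.
\]
Choosing $A\ge 6$ makes each inverse fourth-power coefficient in $-\Delta v + v^2$ non-negative, and choosing $B\ge 1$ forces $B^2\|g\|_{C_T\fP{n}} \ge g$, so that $\partial_t v - \Delta v + v^2 \ge g$ pointwise on $[0,T]\times\fP{n}$; that is, $v$ is a supersolution of~(\ref{equ.general}).

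For the comparison, I would work on the truncated cylinder $[0,T]\times\fP{n-\varepsilon}$ for small $\varepsilon>0$: there $v \ge A/\varepsilon^2$ on the spatial boundary, which dominates the (smooth, hence bounded) function $u$ once $\varepsilon$ is sufficiently small, while on $\{t=0\}$ one has $v \ge 0 = u$. If $w := u-v$ attained a strictly positive maximum at some interior point $z^* = (t^*,x^*)$ with $t^*>0$, then $\partial_t w(z^*) \ge 0$ and $\Delta w(z^*) \le 0$; subtracting the equation for $u$ from the supersolution inequality for $v$ at $z^*$ yields $(u+v)(u-v)(z^*) \le 0$, and since $u,v \ge 0$ this forces $w(z^*) \le 0$, a contradiction. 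Hence $u \le v$ on $[0,T]\times\fP{n-\varepsilon}$, and letting $\varepsilon\downarrow 0$ gives $u\le v$ on $[0,T]\times\fP{n}$.

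Finally, for a given $(t,x) \in [0,T)\times\fP{n}$ set $d^2 := \min\{(n-x_i)^2, (n+x_i)^2 : i = 1,2\}$; each of the four inverse-square terms in $v(x)$ is bounded by $A/d^2$, so
\[
u(t,x) \;\le\; v(x) \;\le\; \frac{4A}{d^2} + B\sqrt{\|g\|_{C_T\fP{n}}} \;\le\; (4A+B)\max\left\{\frac{1}{d^2},\; \sqrt{\|g\|_{C_T\fP{n}}}\right\}.
\]
Taking $A=6$, $B=1$ yields the bound with constant $25$, which is consistent with the claimed $28$ up to a slightly more conservative choice of $A,B$. The main (mild) obstacle in this plan is justifying the comparison step despite $v \equiv +\infty$ on $\partial\fP{n}$; the $\varepsilon$-truncation handles this cleanly precisely because $u$ is assumed smooth (and hence bounded) on the full closed cylinder $[0,T]\times\fP{n}$.
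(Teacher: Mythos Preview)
Your argument is correct. Both you and the paper use the same barrier function
\[
\tilde\eta(x)=\sum_{i=1}^2\Big(\frac{1}{(n-x_i)^2}+\frac{1}{(n+x_i)^2}\Big)+\sqrt{\|g\|_{C_T\fP{n}}},
\]
but the mechanics differ. You treat $v=A\,(\tilde\eta-\sqrt{\|g\|})+B\sqrt{\|g\|}$ as an explicit supersolution and run a direct parabolic comparison on the truncated cylinders $[0,T]\times\fP{n-\varepsilon}$; the blow-up of $v$ at $\partial\fP{n}$ together with the smoothness of $u$ handles the missing lateral boundary data. The paper instead multiplies $u$ by the weight $\eta=1/\tilde\eta$, which vanishes on $\partial\fP{n}$, looks at the maximum of $u\eta$, and extracts the quadratic inequality $u/\tilde\eta\le \tilde\eta/u+6$ at that point, giving $u\le 7\tilde\eta$. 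Your route is a touch more elementary (no algebraic step, just $-\Delta v+v^2\ge g$) and in fact yields the slightly sharper constant $25$; the paper's weighted-maximum argument avoids the $\varepsilon$-truncation by having the weight kill the boundary outright. Either way the content is the same: the barrier $\tilde\eta$ does all the work.
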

    The proof is on p.~\pageref{pr.lem.general_equ} in the appendix. Together with a mollification, this lemma already gives a first estimate for the equation \eqref{equ.renormalized.PAM+square}.
    \begin{lem}\label{lem.u_supremum_est}
        Let $u$ be solution of the equation \eqref{equ.renormalized.PAM+square} with $\kappa = 2$. Suppose $R>R'$ and $r+R < n$, we have for any $0 < \delta < 1$, 
        \begin{equation}\label{equ.est.u_supremum_first}
	        \begin{aligned}
	            \|u\|_{C_T\fP{n-r-R}}& \lesssim \max\left\{\frac{1}{(R-R')^2},\delta^{1-\epsilon}[u]_{C_T(1-\epsilon,\fP{n-r-R'+\delta},\delta)},\right.\\
	            &\hspace{50pt} \|u\|_{C_T(\fP{n-r-R'+\delta})}^{\frac{1}{2}}\delta^{\frac{1-\epsilon}{2}}[u]^{\frac{1}{2}}_{C_T(1-\epsilon,\fP{n-r-R'+\delta},\delta)},\\
	            &\hspace{50pt} \left.\|(u(\xi - C))_\delta\|^{\frac{1}{2}}_{C_T(\fP{n-r-R'})}\right\}.
	       \end{aligned}
	    \end{equation}
    \end{lem}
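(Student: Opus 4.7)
The plan is to convolve the equation for $u$ with $\Psi^\delta$, cast the result in the reaction--diffusion form covered by Lemma~\ref{lem.general_equ}, and then push the resulting $L^\infty$ bound for $u_\delta$ back to $u$. The price to pay is a commutator term $u_\delta^2 - (u^2)_\delta$ together with the pointwise error $u - u_\delta$, both of which are controlled by the H\"older seminorm of $u$ at scale $\delta$.

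Concretely, set $u_\delta := u * \Psi^\delta$. Since $\Psi^\delta \ge 0$ has total mass $1$ and support in $B(0,\delta)$, convolution preserves non-negativity, so $u_\delta \ge 0$ wherever it is defined, which is on $\fP{n-r-R'}$ provided $u$ is known on the slightly larger box $\fP{n-r-R'+\delta}$. From \eqref{equ.renormalized.PAM+square} (with $\kappa=2$) one computes
\begin{equation*}
    (\partial_t - \Delta) u_\delta = ((\xi - C) u)_\delta - (u^2)_\delta = - u_\delta^2 + g, \qquad g := u_\delta^2 - (u^2)_\delta + ((\xi - C) u)_\delta,
\end{equation*}
with $u_\delta(0,\cdot) = 0$. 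Applying Lemma~\ref{lem.general_equ} to $u_\delta$ on $[0,T] \times \fP{n-r-R'}$ yields, for every $(t,x)$ with $x \in \fP{n-r-R}$,
\begin{equation*}
    u_\delta(t,x) \lesssim \max\!\left\{ \frac{1}{(R - R')^2},\ \sqrt{\|g\|_{C_T \fP{n-r-R'}}}\,\right\},
\end{equation*}
since the spatial distance from $x$ to $\partial \fP{n-r-R'}$ is at least $R - R'$.

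It remains to control $\|g\|_{C_T \fP{n-r-R'}}$. The noise piece $(u(\xi-C))_\delta$ contributes the last term of \eqref{equ.est.u_supremum_first} directly, after taking a square root. For the commutator piece, using $\int \Psi^\delta = 1$ one has the identity
\begin{equation*}
    (u^2)_\delta(x) - u_\delta(x)^2 = \int u(y)\bigl(u(y) - u_\delta(x)\bigr)\, \Psi^\delta(x-y)\, dy,
\end{equation*}
combined with the pointwise bound $|u(y) - u_\delta(x)| \lesssim \delta^{1-\epsilon}\, [u]_{C_T(1-\epsilon,\fP{n-r-R'+\delta},\delta)}$, valid for $|y-x|\le\delta$. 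Estimating the remaining factor $u(y)$ by $\|u\|_{C_T(\fP{n-r-R'+\delta})}$ and taking square roots produces the third term of \eqref{equ.est.u_supremum_first}.

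Finally, one passes from $u_\delta$ back to $u$ via
\begin{equation*}
    u(t,x) \le u_\delta(t,x) + |u(t,x) - u_\delta(t,x)| \le u_\delta(t,x) + \delta^{1-\epsilon}\, [u]_{C_T(1-\epsilon,\fP{n-r-R'+\delta},\delta)},
\end{equation*}
which accounts for the second term in \eqref{equ.est.u_supremum_first}. Taking suprema over $\fP{n-r-R}$ and assembling the four ingredients gives the claimed inequality. The main conceptual point---rather than a genuine obstacle---is that the right-hand side still involves the H\"older seminorm of $u$ itself, which is not a priori under control at this stage. This is by design: the subsequent lemmas in the proof of Theorem~\ref{thm.local_est.renormalized.PAM+square} will estimate the companion quantities $U$ and $\nu$, and this interior $L^\infty$ bound will be closed against those bounds via a bootstrap argument.
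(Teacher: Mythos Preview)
Your argument is correct and follows essentially the same route as the paper: convolve with $\Psi^\delta$, apply Lemma~\ref{lem.general_equ} to $u_\delta$ on $\fP{n-r-R'}$, estimate the commutator $u_\delta^2-(u^2)_\delta$ by $\|u\|\cdot\delta^{1-\epsilon}[u]_{1-\epsilon}$, and then pass from $u_\delta$ back to $u$ using $\|u-u_\delta\|\lesssim\delta^{1-\epsilon}[u]_{1-\epsilon}$. The only cosmetic difference is that the paper factors the commutator as a difference of squares via $u_\delta^2(x)-u^2(y)=2(u_\delta(x)-u(y))\int_0^1(\lambda u_\delta(x)+(1-\lambda)u(y))\,d\lambda$, whereas you use the equivalent identity $(u^2)_\delta(x)-u_\delta(x)^2=\int u(y)(u(y)-u_\delta(x))\Psi^\delta(x-y)\,dy$; both lead to the same bound.
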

    \begin{proof}
        We first convolute the equation (\ref{equ.renormalized.PAM+square}) with $\Psi^\delta$ to obtain    \begin{equation}\label{equ.renormalized.convoluted.PAM+square}
	        (\partial_t - \Delta)u_\delta = (u\xi)_\delta - Cu_\delta - (u^2)_\delta.
	\end{equation}
        Rewrite the equation (\ref{equ.renormalized.convoluted.PAM+square}) to 
	    \[
	        (\partial_t - \Delta)u_\delta = -u_\delta^2 + u_\delta^2 - (u^2)_\delta + (u\xi)_\delta - Cu_\delta.
	    \]
        
	    Then the Lemma \ref{lem.general_equ} shows
	    \begin{equation*}
	        \begin{aligned}        \|u_\delta\|_{C_T(\fP{n-r-R})} \lesssim \max&\left\{(R-R')^{-2}, \|u_\delta^2 - (u^2)_\delta\|^{\frac{1}{2}}_{C_T(\fP{n-r-R'})},\right.\\
	        &\quad\left.\|(u(\xi - C))_\delta\|^{\frac{1}{2}}_{C_T(\fP{n-r-R'})}\right\}.
	        \end{aligned}
	    \end{equation*} 
	   Since for any region $D\subset \RR^2$, 
	    \[
	        \|u_\delta - u\|_{C_T(D_\delta)} \lesssim_{\Psi} \delta^{1-\epsilon} [u]_{C_T(1-\epsilon,D,\delta)},
	    \]
	    and 
	    \begin{eqnarray*}
	        &&(u_\delta^2 - (u^2)_\delta)(t,x) \\
	        &=& \int\Psi^\delta(x-y)(u^2_\delta(t,x) - u^2(t,y))dy\\
	        &=& 2\int\Psi^\delta(x-y)(u_\delta(t,x) - u(t,y)) \int(\lambda u_\delta(t,x) + (1-\lambda)u(t,y))d\lambda dy\\
	        &\lesssim&\|u(t,\cdot)\|_{B(x,\delta)}\int\Psi^\delta(x-y)(u_{\delta}(t,x) - u(t,x)+u(t,x)-u(t,y))dy\\
	        &\lesssim& \|u(t,\cdot)\|_{B(x,\delta)} \int\Psi^\delta(x-y)(\delta^{1-\epsilon} + d(x,y)^{1-\epsilon})[u(t,\cdot)]_{1-\epsilon,B(x,\delta),\delta}dy\\
	        &\lesssim& \|u\|_{C_T(B(x,\delta))}\delta^{1-\epsilon}[u]_{C_T(1-\epsilon,B(x,\delta),\delta)},
	    \end{eqnarray*}
	    we then have the result.
    \end{proof}

    In order to estimate the norms, we need to adjust two lemmas from \cite{moinat2020space} to our setting. The first one is a variant of Hairer's reconstruction theorem~\cite{hairer2014theory}. Recall the notation $B^T(x,R) = [0,T] \times B(x,R)$, where $B(x,R)$ is the closed ball with radius $R$ and center $x$, and that $\Psi^\delta$ is the function from Section~\ref{sec:notation-regularity} with which we measure regularity.
    
	\begin{lem}[\cite{moinat2020space}, Theorem~2.8]\label{lem.reconstruction}
	    Let $\gamma >0$ and let $A$ be a finite subset of $(-\infty,\gamma]$. Let $T > 0,\delta \in [0,1)$ and $x \in \RR^2$. Let $F:B^T(x,\delta)^2\rightarrow\RR$ be continuous and such that for all $\beta\in A$ there exist constants $C_\beta > 0$ and $\gamma_\beta\geq \gamma$ such that for all $\bar{\delta}\in(0,\delta)$, for all $x_1\in B(x,\delta -\bar{\delta})$ and $x_2 \in B(x_1,\bar{\delta})$
	    \begin{equation}
	        \left|\int \Psi^{\bar{\delta}}(x_2 - y)(F(t,x_1,y) - F(t,x_2,y))dy\right| \leq \sum_{\beta \in A}C_\beta d(x_1,x_2)^{\gamma_\beta - \beta}\bar{\delta}^\beta,
	    \end{equation}
	    uniformly in $t \in [0,T]$. Then $f(t,x):= F(t,x,x)$ satisfies for all $t \in [0,T]$
	    \begin{equation}
	        \left|\int\Psi^{\delta}(x - y)(F(t,x,y) - f(t,y))dy\right|\lesssim \sum_{\beta\in A}C_\beta \delta^{\gamma_\beta},
	    \end{equation}
	    where the implicit constant only depends on $A$ and $\gamma$.
	\end{lem}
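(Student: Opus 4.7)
The natural strategy is the standard dyadic-iteration proof of Hairer's reconstruction theorem, exploiting the factorization $\Psi^\delta = \Phi^{\delta/2} * \Psi^{\delta/2}$ established in Section~\ref{sec:notation-regularity}. Setting $\delta_n := \delta/2^n$, the factorization telescopes $\Psi^\delta$ into convolutions $\Phi^{\delta_1} * \Phi^{\delta_2} * \cdots * \Phi^{\delta_N} * \Psi^{\delta_N}$, so the testing of $F(t,x,\cdot) - f(t,\cdot)$ at scale $\delta$ can be reduced to testings at all finer scales $\delta_n$, where the hypothesis applies directly. The plan is to quantify this by a decreasing iteration on $n$.

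Concretely, I would introduce
\[
    I_n(x_0) := \int \Psi^{\delta_n}(x_0 - y)\bigl(F(t,x_0,y) - f(t,y)\bigr) dy, \qquad J_n := \sup_{x_0 \in B(x,\delta - \delta_n)} |I_n(x_0)|,
\]
and apply the factorization to rewrite, for $x_0 \in B(x,\delta - \delta_n)$,
\[
    I_n(x_0) = \int \Phi^{\delta_{n+1}}(x_0 - z) \bigl\{A_{n+1}(x_0,z) + I_{n+1}(z)\bigr\} dz,
\]
with $A_{n+1}(x_0,z) := \int \Psi^{\delta_{n+1}}(z-y)(F(t,x_0,y) - F(t,z,y)) dy$. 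The first term is directly under the hypothesis with $\bar\delta = \delta_{n+1}$; since $\Phi^{\delta_{n+1}}(x_0 - \cdot)$ is supported in $B(x_0, \delta_{n+1})$ and $\gamma_\beta \ge \gamma \ge \beta$, we get $|x_0 - z|^{\gamma_\beta - \beta} \le \delta_{n+1}^{\gamma_\beta - \beta}$ on the support, hence $|A_{n+1}(x_0,z)| \le \sum_\beta C_\beta \delta_{n+1}^{\gamma_\beta}$. Using $\Phi \ge 0$ and $\int \Phi = 1$, together with the inclusion $B(x_0, \delta_{n+1}) \subset B(x,\delta - \delta_{n+1})$ which follows from $\delta_n = 2\delta_{n+1}$, a supremum bound on the second term yields the recursion
\[
    J_n \le J_{n+1} + C \sum_{\beta \in A} C_\beta \delta_{n+1}^{\gamma_\beta}.
\]

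Iterating this recursion and summing the geometric series $\sum_{n \ge 0} 2^{-(n+1)\gamma_\beta}$, which converges uniformly in $\beta$ because $\gamma_\beta \ge \gamma > 0$, gives $J_0 \lesssim \sum_\beta C_\beta \delta^{\gamma_\beta} + \lim_{N \to \infty} J_N$, with implicit constant depending only on $\gamma$ and $|A|$. The convergence $J_N \to 0$ follows from uniform continuity of $F$ on the compact set $B^T(x,\delta)^2$: as $\delta_N \to 0$, the kernel $\Psi^{\delta_N}(x_0 - \cdot)$ concentrates near $x_0$, and $F(t,x_0,y) - F(t,y,y) \to 0$ uniformly by continuity of $F$ and the identification $f(t,y) = F(t,y,y)$. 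Since $B(x,\delta - \delta_0) = \{x\}$, we obtain $|I_0(x)| = J_0 \lesssim \sum_\beta C_\beta \delta^{\gamma_\beta}$, which is the claim. The main bookkeeping obstacle is ensuring at every iteration step that both admissibility conditions of the hypothesis ($x_0 \in B(x,\delta - \bar\delta)$ and $z \in B(x_0,\bar\delta)$) remain satisfied; the choice of shrinking radii $\delta - \delta_n$ is tailored precisely so that these hold, and verifying this cleanly is the only nontrivial part of the argument.
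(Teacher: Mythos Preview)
Your argument is correct and is precisely the dyadic-iteration proof from \cite{moinat2020space} that the paper invokes (with $t$ frozen), exploiting the factorization $\Psi^{\delta_n}=\Phi^{\delta_{n+1}}\ast\Psi^{\delta_{n+1}}$ and the nesting $B(x_0,\delta_{n+1})\subset B(x,\delta-\delta_{n+1})$ to obtain the recursion $J_n\le J_{n+1}+\sum_\beta C_\beta\delta_{n+1}^{\gamma_\beta}$. The bookkeeping you flag is handled exactly as you describe, and $J_N\to 0$ follows from uniform continuity together with $\Psi^{\delta_N}\ge 0$, $\int\Psi^{\delta_N}=1$.
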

 
	To be precise, the formulation of this lemma is slightly different than in \cite{moinat2020space}  because the time variable appears differently. But the proof from \cite{moinat2020space} works verbatim in our case, we just have to freeze $t \in [0,T]$. As a consequence, we get the following estimate, where we recall that
    \begin{align*}
        [u(t,\cdot)]_{\alpha,D,r,\theta} = & \sup_{\substack{x\neq \bar x \in D,\\ |x-\bar x|<r}} \frac{|u(t,x) - u(t,\bar x)|}{\theta(x)|x-\bar x|^\alpha},\\
        [U(t,\cdot)]_{\beta,D,r,\theta} := & \sup_{x \in D}\inf_{\nu\in\RR^d}\sup_{\substack{\bar{x}\in D\backslash\{x\}\\ |\bar x - x|<r}} \frac{|U(t,x,\bar{x}) - \nu\cdot(\bar{x}-x)|}{\theta(x)|x-\bar{x}|^\beta},
    \end{align*}
    for $\alpha \in (0,1)$ and for $\beta \in (1,2)$.
 
	\begin{lem}\label{lem.est.irregular_term}
    Let $u$ be the solution to the equation \eqref{equ.renormalized.PAM+square}. The following bounds holds for any weight $\theta \in\pmb{\varrho}(\pmb{\omega})$ and for $z = (t,x)$ with $x \in \mathcal P_n$:
	\begin{equation}\label{equ.est.irregular_term}
	        \begin{aligned}
                    \MoveEqLeft |((u-u(z))\xi)_\delta(z) - C u_\delta(z)|\theta^{-2}(x) \\
	                & \lesssim   e^{2\lambda(\theta)\omega(\delta)}\delta^{1-3\epsilon} \Big([u(t,\cdot)]_{1-\epsilon,B(x,\delta),\delta,\theta} \|\fI\xi\xi\|_{n,-2\epsilon,\theta} \\
                    & \hspace{80pt} + [U(t,\cdot)]_{2-2\epsilon,B(x,\delta),\delta,\theta}
	                \|\xi\|_{n,-1-\epsilon,\theta}\\
                    & \hspace{80pt} +[\nu(t,\cdot)]_{1-2\epsilon,B(x,\delta),\delta,\theta}\|\xi X\|_{n,-\epsilon,\theta}\Big)\\
                    & \quad + |u(z)|\theta^{-1}(x)\|\fI\xi\xi\|_{n,-2\epsilon,\theta}\delta^{-2\epsilon} + |\nu(z)|\theta^{-1}(x)\|\xi X\|_{n,-\epsilon,\theta} \delta^{-\epsilon},
	        \end{aligned}
	    \end{equation}
     where, by a small abuse of notation, we write $\omega(\delta) = \omega(x)$ for some (and thus all) $x$ with $|x|=\delta$.
	\end{lem}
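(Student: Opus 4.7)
The plan is to use the structure of $U$ and $\bar U$ to decompose the integrand pointwise, bound the local pieces directly from the weighted noise norms, and apply the reconstruction lemma~\ref{lem.reconstruction} to the remaining irregular piece.

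Using the definition \eqref{def.U} to write $u(\bar z) - u(z) = U(z,\bar z) + u(z)(\mathcal I\xi(\bar x) - \mathcal I\xi(x))$, the splitting $U = \bar U + \nu(z)(\bar x - x)$, and the renormalization identity $(\mathcal I\xi(\bar x) - \mathcal I\xi(x))\xi(\bar x) = ((\mathcal I\xi)\xi)(x,\bar x) + C$, I obtain the pointwise identity
\begin{equation*}
(u(\bar z) - u(z))\xi(\bar x) - Cu(z) = \bar U(z,\bar z)\xi(\bar x) + \nu(z)(\xi X)(x,\bar x) + u(z)((\mathcal I\xi)\xi)(x,\bar x).
\end{equation*}
Convolving with $\Psi^\delta(x-\cdot)$ and rearranging into the LHS of the lemma, I reduce the estimate to controlling $[\bar U\xi]_\delta(z)$, $\nu(z)[(\xi X)]_\delta(z)$, and $u(z)[((\mathcal I\xi)\xi)]_\delta(z)$, together with a small residual from $u_\delta$ vs $u$ that is absorbed via the same expansion. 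The latter two quantities are bounded pointwise by the weighted versions of \eqref{def.xiX_norm} and \eqref{def.enhanced_xi_norm}, producing the last two summands in the target estimate.

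The main step is to control $[\bar U\xi]_\delta(z)$ via Lemma~\ref{lem.reconstruction} applied to $F(t,x_1,y) := \bar U((t,x_1),(t,y))\xi(y)$; note that $F(t,x,x) = 0$ since $\bar U(z,z) = 0$. A direct computation from the definitions yields the telescoping identity
\begin{equation*}
\bar U(z_1,(t,y)) - \bar U(z_2,(t,y)) = \bar U(z_1,z_2) + (u(z_2)-u(z_1))(\mathcal I\xi(y) - \mathcal I\xi(x_2)) + (\nu(z_2)-\nu(z_1))(y-x_2).
\end{equation*}
Multiplying by $\xi(y)$, reusing the renormalization identity to turn $(\mathcal I\xi(y) - \mathcal I\xi(x_2))\xi(y)$ into $((\mathcal I\xi)\xi)(x_2,y) + C$ and $(y - x_2)\xi(y)$ into $(\xi X)(x_2,y)$, and integrating against $\Psi^{\bar\delta}(x_2 - \cdot)$, I split the resulting expression into three products bounded respectively by $[U]_{2-2\epsilon,\theta}\|\xi\|_{n,-1-\epsilon,\theta}\bar\delta^{-1-\epsilon}|x_1-x_2|^{2-2\epsilon}$, $[u]_{1-\epsilon,\theta}\|(\mathcal I\xi)\xi\|_{n,-2\epsilon,\theta}\bar\delta^{-2\epsilon}|x_1-x_2|^{1-\epsilon}$, and $[\nu]_{1-2\epsilon,\theta}\|\xi X\|_{n,-\epsilon,\theta}\bar\delta^{-\epsilon}|x_1-x_2|^{1-2\epsilon}$, up to weighted constants. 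These match the reconstruction hypothesis with $\beta \in \{-1-\epsilon,\,-2\epsilon,\,-\epsilon\}$ and common $\gamma_\beta = 1-3\epsilon$, so Lemma~\ref{lem.reconstruction} delivers $|[\bar U\xi]_\delta(z)| \lesssim \delta^{1-3\epsilon}$ times the three norm products, which is precisely the first block of the claimed bound.

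The main obstacle is careful bookkeeping: the renormalization constant $C$ produces by-product terms each time we invoke the identity $(\mathcal I\xi(\bar x) - \mathcal I\xi(x))\xi(\bar x) = ((\mathcal I\xi)\xi)(x,\bar x) + C$, and these by-products must be re-expanded via the same pointwise substitution for $u(\bar z) - u(z)$ so that they are absorbed into the $[u]_{1-\epsilon,\theta}$ Hölder term rather than appearing as an uncontrolled $|C|$-factor. The weight prefactor $e^{2\lambda(\theta)\omega(\delta)}$ then emerges naturally from the products $\theta(x_1)\theta(x_2)$ that appear after applying the weighted seminorms, combined with the defining inequality $\theta(x_i) \lesssim \theta(x)e^{\lambda(\theta)\omega(x_i - x)}$ for $x_i \in B(x,\delta)$.
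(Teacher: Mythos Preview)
Your overall strategy---decompose via $\bar U$, bound the $u(z)$ and $\nu(z)$ pieces directly, and feed the remainder into Lemma~\ref{lem.reconstruction}---is exactly the paper's approach. But your handling of the renormalization constant $C$ has a genuine gap.

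With your choice $F(t,x_1,y)=\bar U((t,x_1),(t,y))\xi(y)$, the telescoping identity forces
\[
\int \Psi^{\bar\delta}(x_2-y)\big(F(t,x_1,y)-F(t,x_2,y)\big)\,dy
\]
to contain the term $(u(z_2)-u(z_1))\int(\fI\xi(y)-\fI\xi(x_2))\xi(y)\Psi^{\bar\delta}(x_2-y)\,dy$. This integral is the \emph{un}renormalized object; writing it as $\int((\fI\xi)\xi)(x_2,y)\Psi^{\bar\delta}(x_2-y)\,dy + C$ leaves you with the summand $(u(z_2)-u(z_1))C$, which carries a bare $|C|$ and no negative power of $\bar\delta$. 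Since the whole point is to obtain bounds uniform in the mollification (where $|C|\to\infty$), this breaks the reconstruction hypothesis. Likewise, the ``small residual'' $C(u(z)-u_\delta(z))$ from your initial decomposition carries the same divergent factor. Your proposed cure---re-expanding $u(z_2)-u(z_1)$ via the pointwise substitution---does nothing: it just produces $C\bar U(z_1,z_2)+C\nu(z_1)(x_2-x_1)+Cu(z_1)(\fI\xi(x_2)-\fI\xi(x_1))$, with $C$ still sitting in front of everything.

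The fix, which is what the paper does, is to absorb the residual into $F$ from the outset: take
\[
F(t,x_1,y)=\bar U((t,x_1),(t,y))\,\xi(y)+C\big(u(t,x_1)-u(t,y)\big).
\]
One still has $F(t,x,x)=0$, and now the extra contribution $C(u(z_1)-u(z_2))$ in $F(t,x_1,y)-F(t,x_2,y)$ combines with $(u(z_2)-u(z_1))(\fI\xi(y)-\fI\xi(x_2))\xi(y)$ to give exactly $(u(z_2)-u(z_1))\big((\fI\xi(y)-\fI\xi(x_2))\xi(y)-C\big)=(u(z_2)-u(z_1))((\fI\xi)\xi)(x_2,y)$, which \emph{is} controlled by $\|(\fI\xi)\xi\|_{n,-2\epsilon,\theta}$. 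The reconstruction lemma then bounds $(\bar U(z,\cdot)\xi)_\delta(z)+C(u(z)-u_\delta(z))$ in one stroke, and no bare $|C|$ ever appears.
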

 \begin{proof}[Proof of Lemma \ref{lem.est.irregular_term}]\label{pr.lem.est.irregular_term}
	    Recall that $U(z,\cdot) = u - u(z) - u(z)(\mathcal I\xi - \mathcal I \xi(x))$ and $\bar U(z,\cdot) = U(z,\cdot) - \nu(z)(\cdot - x)$ and therefore
	    \begin{align} \label{eq:est.irregular_term.pr1} \nonumber
	        \MoveEqLeft((u-u(z))\xi)_\delta(z) - Cu_\delta(z) \\ \nonumber
	        &= (\bar{U}(z,\cdot)\xi)_\delta(z) + C(u(z) - u_\delta(z)) + u(z)(((\fI\xi - \fI\xi(z))\xi)_\delta - C) \\
	        &\quad + \nu(z)((\cdot-x)\xi)_\delta. 
	    \end{align}
	    For $z=(t,x)$ with $x \in \fP{n}$ we bound the last two terms on the right hand-side by
	    \begin{equation}\label{eq:est.irregular_term.pr2}
	        \theta^{-2}(x)|u(z)(((\fI\xi- \fI\xi(z))\xi)_\delta-C)(z)| \leq \theta^{-1}(x)|u(z)|\|\fI\xi\xi\|_{n,-2\epsilon,\theta}\delta^{-2\epsilon}
	    \end{equation}
	    and
	    \begin{equation}\label{eq:est.irregular_term.pr3}
	        \theta^{-2}(x)|\nu(z)((\cdot - x)\xi)_\delta(z)| \leq \theta^{-1}(x)|\nu(z)|\|\xi X\|_{n,-\epsilon,\theta} \delta^{-\epsilon}.
	    \end{equation}
        To control the remaining terms in~\eqref{eq:est.irregular_term.pr1} let
        \[
            F(t,x,\bar x) = \bar U(z,\bar z)\xi(\bar x) + C(u(z) - u(\bar z)),
        \]
        where $\bar z = (t,\bar x)$. Then
        \begin{align*}
	        F(t,x_1,y) - F(t,x_2,y) &= (u(z_2) - u(z_1))((\fI\xi(y) - \fI\xi(x_2))\xi(y) - C) \\
	        &\quad +\bar{U}(t,x_1,x_2)\xi(y) + (\nu(z_2)-\nu(z_1))(y-x_2)\xi(y).
	    \end{align*}
	    Hence, for $\bar \delta \in (0,\delta)$ and for $x_1 \in B(x,\delta - \bar{\delta}),x_2 \in B(x_1,\bar{\delta})$, we have
	    \begin{align*}
	        \MoveEqLeft
            \int\Psi^{\bar{\delta}}(x_2-y)(F(t,x_1,y) - F(t,x_2,y))dy \\
	        &\leq \theta(x_2)^2 \Big([u(t,\cdot)]_{1-\epsilon,B(x,\delta),\delta,\theta}d(x_1,x_2)^{1-\epsilon}\|\fI\xi\xi\|_{n,-2\epsilon,\theta}\bar \delta^{-2\epsilon}\\
	        &\hspace{50pt} +[U(t,\cdot)]_{2-2\epsilon,B(x,\delta),\delta,\theta}d(x_1,x_2)^{2-2\epsilon}\|\xi\|_{n,-1-\epsilon,\theta}\bar \delta^{-1-\epsilon}\\
	        &\hspace{50pt}  +[\nu(t,\cdot)]_{1-2\epsilon,B(x,\delta),\delta,\theta}d(x_1,x_2)^{1-2\epsilon}\|\xi X\|_{n,-\epsilon,\theta}\bar \delta^{-\epsilon}\Big)
	    \end{align*}
	    Thus, using that $f(t,x) = F(t,x,x) = 0$, we obtain from Lemma \ref{lem.reconstruction} the bound
	    \begin{align*}
	        \MoveEqLeft
            \theta^{-2}(x)|(\bar{U}(z,\cdot) \xi)_\delta (z) + C(u(z)-u_\delta(z))| \\
            & = \theta^{-2}(x)\Big|\int \Psi^\delta(x - y) F(t,x,y) dy\Big|\\
	        &\leq e^{2\lambda(\theta)\omega(\delta)}\delta^{1-3\epsilon} \Big([u(t,\cdot)]_{1-\epsilon,B(x,\delta),\delta,\theta}\|\fI\xi\xi\|_{n,-2\epsilon,\theta} \\ 
            &\hspace{80pt} + [U(t,\cdot)]_{2-2\epsilon,B(x,\delta),\delta,\theta}\|\xi\|_{n,-1-\epsilon,\theta}\\
            &\hspace{80pt}+[\nu(t,\cdot)]_{1-2\epsilon,B(x,\delta),\delta,\theta}\|\xi X\|_{n,-\epsilon,\theta}\Big).
	    \end{align*}
     Plugging this back into~\eqref{eq:est.irregular_term.pr1}, together with~\eqref{eq:est.irregular_term.pr2} and~\eqref{eq:est.irregular_term.pr3}, we obtain the claimed bound.
	\end{proof}
    As a direct consequence we have a uniform estimate for the mollified, renormalized product:
    \begin{cor}\label{cor.renormalized.est}
        Let $u$ be the solution to the equation \eqref{equ.renormalized.PAM+square} and $\theta \in\pmb{\varrho}(\pmb{\omega})$. We have for $D_\delta \subset \mathcal P_n$
        \begin{equation}\label{main.step2}
	        \begin{aligned}
	            &\|(u\xi - Cu)_\delta\|_{C_T(D_\delta,\theta^2)} \\
	            \lesssim_\theta&\quad\delta^{1-3\epsilon}([u]_{C_T(1-\epsilon,D,\delta,\theta)}\|\fI\xi\xi\|_{n,-2\epsilon,\theta} \\
	            &+[U]_{C_T(2-2\epsilon,D,\delta,\theta)}\|\xi\|_{n,-1-\epsilon,\theta}+[\nu]_{C_T(1-2\epsilon,D,\delta,\theta)}\|\xi X\|_{n,-\epsilon,\theta})\\
	            &+\|u\|_{C_T(D_\delta,\theta)}(\|\fI\xi\xi\|_{n,-2\epsilon,\theta}\delta^{-2\epsilon}+\|\xi\|_{n,-1-\epsilon,\theta}\delta^{-1-\epsilon})\\
	            &+\|\nu\|_{C_T(D_\delta,\theta)}\|\xi X\|_{n,-\epsilon,\theta}\delta^{-\epsilon}.
	            \end{aligned}
	\end{equation}
    \end{cor}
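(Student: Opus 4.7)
The plan is to derive Corollary \ref{cor.renormalized.est} essentially by taking the supremum of the pointwise bound in Lemma \ref{lem.est.irregular_term} over $z = (t,x) \in [0,T] \times D_\delta$, while also accounting for a single additional term that distinguishes $(u\xi - Cu)_\delta(z)$ from the quantity $((u-u(z))\xi)_\delta(z) - Cu_\delta(z)$ that Lemma \ref{lem.est.irregular_term} controls.

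First, I would write the algebraic identity obtained by splitting $u(\bar z)\xi(\bar z) = (u(\bar z) - u(z))\xi(\bar z) + u(z)\xi(\bar z)$ inside the convolution against $\Psi^\delta$:
\begin{align*}
(u\xi - Cu)_\delta(z) = \bigl(((u - u(z))\xi)_\delta(z) - Cu_\delta(z)\bigr) + u(z)\xi_\delta(z).
\end{align*}
The bracket on the right is controlled pointwise by the right-hand side of \eqref{equ.est.irregular_term}. For the new piece $u(z)\xi_\delta(z)$ I would invoke the weighted version of \eqref{def.xi_norm} from Section 2.3, which gives
\begin{align*}
|u(z)\xi_\delta(z)|\theta^{-2}(x) \leq |u(z)|\theta^{-1}(x)\,\|\xi\|_{n,-1-\epsilon,\theta}\,\delta^{-1-\epsilon}.
\end{align*}
After taking the supremum this produces precisely the summand $\|u\|_{C_T(D_\delta,\theta)}\|\xi\|_{n,-1-\epsilon,\theta}\delta^{-1-\epsilon}$ in the statement of the corollary, which is the only term on the right-hand side of \eqref{main.step2} that is absent from Lemma \ref{lem.est.irregular_term}.

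Next, I would take the supremum of each remaining summand of \eqref{equ.est.irregular_term} over $(t,x) \in [0,T] \times D_\delta$. The hypothesis $D_\delta \subset \mathcal P_n$ ensures both that $x \in \mathcal P_n$, so that the noise norms $\|\xi\|_{n,-1-\epsilon,\theta}$, $\|\fI\xi\xi\|_{n,-2\epsilon,\theta}$, $\|\xi X\|_{n,-\epsilon,\theta}$ are applicable at $x$, and that $B(x,\delta) \subset D$, so that the localized semi-norms $[u(t,\cdot)]_{1-\epsilon,B(x,\delta),\delta,\theta}$, $[U(t,\cdot)]_{2-2\epsilon,B(x,\delta),\delta,\theta}$, $[\nu(t,\cdot)]_{1-2\epsilon,B(x,\delta),\delta,\theta}$ are all dominated by their counterparts on $D$ by monotonicity of semi-norms in the underlying domain. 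Finally, since $\omega(\delta)$ is bounded uniformly in $\delta \in (0,1]$, the prefactor $e^{2\lambda(\theta)\omega(\delta)}$ is a $\theta$-dependent constant that is absorbed into the implicit constant, which is exactly why the statement uses $\lesssim_\theta$.

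I do not expect any real obstacle here, as the whole argument is bookkeeping once Lemma \ref{lem.est.irregular_term} is in hand. The only mildly delicate point is to keep the decomposition and the additional $u(z)\xi_\delta(z)$ term clearly separated from the reconstruction-type estimate in Lemma \ref{lem.est.irregular_term}, so that the coefficient $\delta^{-1-\epsilon}$ multiplying $\|u\|_{C_T(D_\delta,\theta)}$ in \eqref{main.step2} is correctly attributed to the naive bound on $\xi_\delta$ rather than to the refined reconstruction argument, which is what produces the improved factor $\delta^{1-3\epsilon}$ multiplying the semi-norms of $u$, $U$ and $\nu$.
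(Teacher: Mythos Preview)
Your proposal is correct and follows essentially the same approach as the paper's proof: split $(u\xi - Cu)_\delta(z)$ into $((u-u(z))\xi)_\delta(z) - Cu_\delta(z)$ plus $u(z)\xi_\delta(z)$, apply Lemma~\ref{lem.est.irregular_term} to the first part and the trivial bound $|\xi_\delta(x)|\theta^{-1}(x)\le \|\xi\|_{n,-1-\epsilon,\theta}\delta^{-1-\epsilon}$ to the second, then take the supremum over $[0,T]\times D_\delta$. Your remarks on $B(x,\delta)\subset D$ and on absorbing $e^{2\lambda(\theta)\omega(\delta)}$ into $\lesssim_\theta$ are exactly the bookkeeping the paper leaves implicit.
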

    \begin{proof}
        We have 
             \begin{equation*}
        \begin{aligned}
            &\quad\|(u\xi - Cu)_\delta\|_{C_T(D_\delta,\theta^2)} \\
            &\lesssim \|((u-u(z))\xi)_\delta(z) - Cu_\delta(z)\|_{C_T(D_\delta,\theta^2)} + \|u\xi_\delta\|_{C_T(D_\delta,\theta^2)},
        \end{aligned}
     \end{equation*}
    so that the claim follows from lemma \ref{lem.est.irregular_term} and because $\|\xi_\delta\|_{C_T(D_\delta, \theta)} \lesssim \|\xi\|_{n,-1-\epsilon,\theta}\delta^{-1-\epsilon}$.
    \end{proof}
 
    We also need to adapt a Schauder estimate for functions of two variables from \cite{moinat2020space} (a variant of Hairer's Schauder estimates for modelled distributions \cite{hairer2014theory}):
    
	\begin{lem}\label{lem.Schauder_est.two_variable}
	   Let $T > 0$, $\kappa \in (1, 2)$ and let $A\subset (-\infty,\kappa]$ be finite. Let $U$ be a bounded function of two variables defined on a domain $D^T\times D^T$ such that $U(z,z) = 0$ for all $z \in D^T$ and $U(0,\cdot,\cdot) = 0$. Let $d_0 > 0$ and assume that for any $0 < d \leq d_0$ and $L\leq \frac{d}{4}$ there exists a constant $M_{D_d,L}^{(1)}$ such that for all base points $x \in D_d$ and length scales $\delta\leq L$, it holds uniformly in $t$ that 
	   \begin{equation}\label{equ.Schauder_est.two_variable.ass1}
	       \delta^2\|(\partial_t - \Delta)U_\delta(t,x,\cdot)\|_{B(x,L)}\leq M_{D_d,L}^{(1)}\sum_{\beta\in A}\delta^\beta L^{\kappa - \beta}.
	   \end{equation}
	   Assume furthermore, that for $L_1,L_2\leq \frac{d}{4}$ there exists a constant $M_{D_d,L_1,L_2}^{(2)}$ such that for any $x \in D_d$, for any $x_1 \in B(x,L_1)$ for any $x_2 \in B(x_1,L_2)$ the following 'three-point continuity' holds:
	   \begin{equation}\label{equ.Schauder_est.two_variable.ass2}
	       |U(t,x,x_2) - U(t,x,x_1) - U(t,x_1,x_2)| \leq M_{D_d,L_1,L_2}^{(2)}\sum_{\beta\in A}d(x_1,x)^\beta d(x_2,x_1)^{\kappa-\beta}.
	   \end{equation}
	   Additionally, define 
	   \[
	        M^{(1)}:=\sup_{d\leq d_0}d^\kappa M_{D_d,\frac{d}{4}}^{(1)},\qquad M^{(2)}:=\sup_{d\leq d_0}d^\kappa M_{D_d,\frac{d}{4},\frac{d}{4}}.
	   \]
	   Then 
	   \begin{equation}\label{equ.Schauder_est.two_variable.result}
	       \sup_{d\leq d_0} d^\kappa [U]_{C_T(\kappa,D_d,d)} \lesssim M^{(1)} + M^{(2)} + \sup_{d\leq d_0}\|U\|_{D_d^T,d}.
	   \end{equation}
	   Here, $'\lesssim'$ denotes a bound that holds up to a multiplicative constant that only depends on $\kappa$ and $A$ and $T$.
	\end{lem}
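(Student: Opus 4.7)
The plan is to adapt the Schauder-type estimate for modelled distributions of \cite{moinat2020space} to the present two-parameter parabolic setting, with the time variable $t$ treated as a frozen parameter throughout the spatial analysis. All the hypotheses and the conclusion are already uniform in $t$, so no additional argument in the time direction is needed; it suffices to prove the bound pointwise in $t$ and take suprema at the end.

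\emph{Step 1 (local Taylor expansion at a fixed base point).} Fix $t \in [0, T]$ and $x \in D_d$, and regard $\tilde U(s, \bar x) := U(s, x, \bar x)$ as a function of $(s, \bar x)$, noting that $\tilde U(t, x) = 0$ and $\tilde U(0, \cdot) \equiv 0$. The idea is to construct a candidate linear approximation $\nu(t, x)$ at the diagonal and prove the pointwise estimate
\[
|U(t, x, \bar x) - \nu(t, x)(\bar x - x)| \lesssim d^{-\kappa}\bigl(d^\kappa(M^{(1)}_{D_d, d/4} + M^{(2)}_{D_d, d/4, d/4}) + \|U\|_{D^T_d, d}\bigr)|\bar x - x|^\kappa
\]
for $\bar x \in B(x, d/4)$. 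This is obtained by applying a suitable form of the reconstruction result Lemma~\ref{lem.reconstruction} to a function $F$ built from $\tilde U$ and its coarse-scale linear approximation. The caloric bound \eqref{equ.Schauder_est.two_variable.ass1} furnishes the regularity input at a single base point via the multiscale identity $\Psi^\delta = \Phi^{\delta/2} \ast \Psi^{\delta/2}$, which converts $\tilde U_\delta - \tilde U_{\delta/2}$ into a convolution against $\Phi^{\delta/2}$ of a zero-mean increment whose leading order is $\delta^2 \Delta \tilde U_{\delta/2}$; splitting $\Delta = \partial_t - (\partial_t - \Delta)$ and absorbing the time derivative through a parabolic rescaling then allows one to sum the resulting dyadic telescope in $\delta$. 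The three-point continuity \eqref{equ.Schauder_est.two_variable.ass2} supplies precisely the consistency condition needed by the reconstruction theorem to pass from this one-base-point input to a reconstruction of $\tilde U$ itself, and provides the $M^{(2)}$ contribution to the Taylor remainder.

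\emph{Step 2 (extension to the full ball of radius $d$).} The estimate of Step 1 gives an admissible witness $\nu := \nu(t, x_1)$ in the infimum defining $[U(t, \cdot)]_{\kappa, D_d, d}$ for any base point $x_1 \in D_d$ and any target $x_2 \in B(x_1, d/4) \cap D_d$. For $|x_1 - x_2| \in [d/4, d)$, the quotient $|U(t, x_1, x_2)|/|x_1 - x_2|^\kappa$ is controlled by $(d/4)^{-\kappa}\|U\|_{D^T_d, d}$, which accounts for the bulk-supremum contribution. Collecting these bounds, multiplying by $d^\kappa$, and taking suprema over $d \le d_0$ and $t \in [0, T]$ yields \eqref{equ.Schauder_est.two_variable.result} with implicit constant depending only on $\kappa$, $A$, and $T$.

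\emph{Main obstacle.} The technical heart is the coupling in Step~1 between the spatial mollification $\Psi^\delta$ and the parabolic operator $\partial_t - \Delta$: matching scales requires the multiscale product structure of $\Psi$ and a careful parabolic rescaling to trade $\Delta$ for $(\partial_t - \Delta)$ up to a $\partial_t$ that must itself be controlled by the bulk supremum or by applying the caloric assumption at a shifted scale. The indexing set $A \subset (-\infty, \kappa]$ contains exponents of both signs in $\delta$ and $L$, so the dyadic bookkeeping must keep them balanced; the assumption $\kappa \in (1, 2)$, together with the vanishing $U(z, z) = 0$ and $U(0, \cdot, \cdot) = 0$, is essential to eliminate divergent contributions from the diagonal and from the initial time. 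The vanishing at $t = 0$, absent in the purely spatial argument of \cite{moinat2020space}, is the only genuinely new ingredient required for our application.
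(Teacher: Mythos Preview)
Your outline has a genuine gap in Step~1. The paper's proof does \emph{not} proceed via the reconstruction theorem (Lemma~\ref{lem.reconstruction}) at all; that lemma compares a germ $F(t,x,\cdot)$ against its diagonal $f(t,\cdot)=F(t,\cdot,\cdot)$, which is not the question here. Instead the paper follows the Campanato-type iteration of \cite{moinat2020space}: for fixed $t,x$ it decomposes $U_\delta(t,x,\cdot)=u_{>}+u_{<}$, where $u_{>}$ solves the forced heat equation with right-hand side $\mathbf 1_{B^T(x,L)}(\partial_t-\Delta)U_\delta(t,x,\cdot)$ and Dirichlet boundary data, and $u_{<}$ is caloric on $B^T(x,L)$ with zero initial data. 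Assumption~\eqref{equ.Schauder_est.two_variable.ass1} bounds $u_{>}$ directly; the caloric piece $u_{<}$ is handled by a separate interior gradient estimate for the heat equation with zero initial condition (Lemma~\ref{lem.gradient_est.heat_interior}), which is the only genuinely new ingredient relative to \cite{moinat2020space} and is precisely where the hypothesis $U(0,\cdot,\cdot)=0$ enters. One then iterates over scales with fixed ratio $R=\epsilon L=\epsilon^{-1}\delta$, uses the three-point continuity to pass between $U_\delta$ and $U$, and absorbs.

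Your proposed mechanism --- telescoping $U_\delta-U_{\delta/2}$ via $\Psi^\delta=\Phi^{\delta/2}\ast\Psi^{\delta/2}$ to extract $\delta^2\Delta U_{\delta/2}$, then writing $\Delta=\partial_t-(\partial_t-\Delta)$ --- breaks down at the $\partial_t$ term. The mollifier $\Psi^\delta$ is purely spatial, so there is no ``parabolic rescaling'' available to trade $\partial_t U_{\delta/2}$ against anything; neither the bulk supremum of $U$ nor the caloric assumption~\eqref{equ.Schauder_est.two_variable.ass1} at a shifted scale controls $\partial_t U_{\delta/2}$, and the zero initial condition only gives $\int_0^t \partial_s U_{\delta/2}\,ds=U_{\delta/2}(t)$, which does not make the spatial dyadic sum converge. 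The paper's caloric/forced decomposition together with Lemma~\ref{lem.gradient_est.heat_interior} is what actually closes this loop.
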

 
	The proof is on p.~\pageref{pr.lem.Schauder_est.two_variable}. As a direct consequence we get an estimate for $\nu$, with the same proof as in \cite{moinat2020space}:
	
	\begin{cor}\label{cor.est.spatial_gradient}
	    Fix $0\leq d\leq d_0$ such that $D_d\neq \emptyset$. Assume that $D_d$ satisfies an interior cone condition with parameter $d\geq r_d > 0$ and $\lambda \in (0,1)$, i.e. for all $r \in [0,r_d]$, for all $x \in D_d$, for any vector $\nu \in \RR^2$, there exists $y \in D_d$ such that $d(x,y) = r$ and
	    \[
	        |\nu\cdot(y-x)| \geq \lambda|\nu|d(x,y).
	    \]
	    Then for the optimal function $\nu$ in inequality (\ref{equ.Schauder_est.two_variable.result}), for all $r \in [0,r_d]$,
	    \[
	        \lambda\|\nu(t,\cdot)\|_{D_d}\leq [U(t,\cdot)]_{\kappa,D_d,d}r^{\kappa - 1}+ \|U(t,\cdot)\|_{D_d,r}r^{-1}.
	    \]
	    If inequality (\ref{equ.Schauder_est.two_variable.ass2}) holds for all $t \in [0,T], x,x_1,x_2\in D_d$, we have for $r \leq r_d$,
	    \[
	        [\nu(t,\cdot)]_{\kappa-1} \lesssim [U(t,\cdot)]_{\kappa,D_d,d} + M_{D_d,\frac{d}{4},\frac{d}{4}}^{(2)} + r^{-\kappa}\|U(t,\cdot)\|_{D_d,r}.
	    \]
	    The implicit constant in this inequality only depends on $\lambda,\kappa$ and $A$.
	\end{cor}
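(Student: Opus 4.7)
The plan is to reduce both inequalities to the interior cone condition, combined with the fact that the residual $U(t,x,\bar x)-\nu(t,x)\cdot(\bar x-x)$ is, by definition of $[U(t,\cdot)]_{\kappa,D_d,d}$ and the optimality of $\nu(t,x)$, controlled by $[U(t,\cdot)]_{\kappa,D_d,d}|x-\bar x|^\kappa$ whenever $|x-\bar x|<d$.

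For the first inequality, I would fix $x \in D_d$ and apply the cone condition with the vector $\nu(t,x)$: this produces $y \in D_d$ with $d(x,y)=r$ and $|\nu(t,x)\cdot(y-x)|\ge \lambda|\nu(t,x)|r$. The triangle inequality together with the residual bound gives
\[
\lambda|\nu(t,x)|r \le |\nu(t,x)\cdot(y-x)| \le |U(t,x,y)| + [U(t,\cdot)]_{\kappa,D_d,d}r^\kappa \le \|U(t,\cdot)\|_{D_d,r} + [U(t,\cdot)]_{\kappa,D_d,d}r^\kappa,
\]
and dividing by $r$ and taking the supremum over $x$ yields the claim.

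For the second inequality I would split into two regimes depending on whether $|x-x_1|$ is small or large relative to the cone radius $r$. When $|x-x_1|\le r$, apply the cone condition to $\nu(t,x)-\nu(t,x_1)$ to obtain $x_2 \in D_d$ with $d(x_1,x_2)=|x-x_1|$ and $|(\nu(t,x)-\nu(t,x_1))\cdot(x_2-x_1)|\ge \lambda|\nu(t,x)-\nu(t,x_1)||x-x_1|$. The algebraic identity
\[
(\nu(t,x)-\nu(t,x_1))\cdot(x_2-x_1) = \bigl[U(t,x,x_2)-U(t,x,x_1)-U(t,x_1,x_2)\bigr] - E_1 + E_2 + E_3,
\]
with residuals $E_1 := U(t,x,x_2)-\nu(t,x)\cdot(x_2-x)$, $E_2 := U(t,x,x_1)-\nu(t,x)\cdot(x_1-x)$ and $E_3 := U(t,x_1,x_2)-\nu(t,x_1)\cdot(x_2-x_1)$, reduces the bound on the left-hand side to the three-point continuity \eqref{equ.Schauder_est.two_variable.ass2} (contributing $M^{(2)}_{D_d,d/4,d/4}$) plus the three residual bounds (each controlled by $[U(t,\cdot)]_{\kappa,D_d,d}$ times a power of distance). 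Dividing by $\lambda|x-x_1|$ produces $|\nu(t,x)-\nu(t,x_1)|\lesssim ([U(t,\cdot)]_{\kappa,D_d,d}+M^{(2)}_{D_d,d/4,d/4})|x-x_1|^{\kappa-1}$. When $|x-x_1|>r$, use $|\nu(t,x)-\nu(t,x_1)|\le 2\|\nu(t,\cdot)\|_{D_d}$, so that $|\nu(t,x)-\nu(t,x_1)|/|x-x_1|^{\kappa-1}\le 2\|\nu(t,\cdot)\|_{D_d}/r^{\kappa-1}$, and insert the first inequality to get a bound by $\lambda^{-1}([U(t,\cdot)]_{\kappa,D_d,d}+r^{-\kappa}\|U(t,\cdot)\|_{D_d,r})$. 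Taking the maximum over the two regimes gives the claimed estimate.

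The main technical subtlety is bookkeeping: all three auxiliary points $x,x_1,x_2$ must lie in $D_d$, so the cone radius must satisfy $r\le r_d$, and the distances $|x-x_2|,|x-x_1|,|x_1-x_2|$ must remain below the scale $d$ at which $[U(t,\cdot)]_{\kappa,D_d,d}$ is defined; both conditions are secured by the assumed interior cone parameter $r_d\le d$. Once these scale constraints are imposed, the algebraic identity together with the cone condition automatically balances the powers of $|x-x_1|$ and $r$ appearing in the three residual bounds and the three-point continuity term, and the final optimisation over $r$ is the only place the $r^{-\kappa}\|U(t,\cdot)\|_{D_d,r}$ contribution is introduced.
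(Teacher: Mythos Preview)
Your proposal is correct and matches the standard argument: the paper itself does not give a proof of this corollary but simply cites \cite{moinat2020space}, and your derivation via the interior cone condition, the residual bound coming from the optimality of $\nu$, and the algebraic identity linking $(\nu(t,x)-\nu(t,x_1))\cdot(x_2-x_1)$ to the three-point defect is precisely the argument found there. The only point worth tightening is the bookkeeping you already flag: in the first regime the bound on $E_1$ requires $|x-x_2|\le 2|x-x_1|<d$, so one should take the cut-off at, say, $|x-x_1|\le r/2$ (or equivalently choose the cone radius $|x-x_1|$ only when this is below $d/2$); this costs only a harmless constant and does not affect the structure of the proof.
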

    Recall the definition \eqref{def.U} and the definition \eqref{def.bar_U}
    \[
        U(z,\bar{z}):= u(\bar{z}) - u(z) - u(z)(\fI\xi(\bar{x})-\fI\xi(x)),
    \]
    we apply lemma \ref{lem.Schauder_est.two_variable} and the corollary \ref{cor.est.spatial_gradient} to the function $U$ and obtain the estimation 
    \begin{lem}\label{lem.U_holer_est}
        Let $u$ be solution of the equation \eqref{equ.renormalized.PAM+square} with $\kappa = 2$. For any region $D\subset \fP{n}$, and $d_0 > 0$ such that $D$ contains a ball of radius $d_0$, we have \begin{equation}\label{equ.U_Holder_est}
	        \sup_{d\leq d_0}d^{2-2\epsilon}[U]_{C_T(2-2\epsilon,D_d,d)} \lesssim \sup_{d\leq d_0}\{A\|u\|_{C_TD} + Bd^{2-2\epsilon}[U]_{C_T(2-2\epsilon,D_d,d)}\},
	    \end{equation}
	    where 
	    \begin{eqnarray*}
                \begin{aligned}
	        A = &d^2\|u\|_{C_TD} + d^2J(\xi,\chi) + d^{1-\epsilon}(\|\xi X\|_{n,-\epsilon} + [\fI\xi]_{n,1-\epsilon} + \|\xi\|_{n,-1-\epsilon}) \\
	        +& d^{2-2\epsilon}([\fI\xi]_{n,1-\epsilon}\|\xi\|_{n,-1-\epsilon} + [\fI\xi]_{n,1-\epsilon}^2 + \|\xi X\|_{n,-\epsilon}[\fI\xi]_{n,1-\epsilon} + \|\fI\xi\xi\|_{n,-2\epsilon}) \\
	        +& d^{3-3\epsilon}([\fI\xi]_{n,1-\epsilon}\|\fI\xi\xi\|_{n,-2\epsilon} + [\fI\xi]_{n,1-\epsilon}^2\|\xi X\|_{n,-\epsilon}) + 1, 
                \end{aligned}
	    \end{eqnarray*}
	    and 
	    \begin{eqnarray*}
	        B =&& d^{1-\epsilon}(\|\xi\|_{n,-1-\epsilon}+[\fI\xi]_{n,1-\epsilon} + \|\xi X\|_{n,-\epsilon}) \\
	        &+& d^{2-2\epsilon}(\|\fI\xi\xi\|_{n,-2\epsilon} + \|\xi X\|_{n,\epsilon}[\fI\xi]_{n,1-\epsilon}).
	    \end{eqnarray*}
    \end{lem}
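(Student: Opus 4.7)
The plan is to apply the two-variable Schauder estimate (Lemma \ref{lem.Schauder_est.two_variable}) with exponent $\kappa_0 := 2-2\epsilon \in (1,2)$ to the function $U$ defined in \eqref{def.U}, and then convert the resulting bounds on $M^{(1)}$, $M^{(2)}$ and $\sup_d\|U\|_{D_d^T,d}$ into the form \eqref{equ.U_Holder_est}.

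Verifying the three-point continuity \eqref{equ.Schauder_est.two_variable.ass2} is immediate from the definition: direct cancellation yields
\begin{equation*}
U(t,x,x_2) - U(t,x,x_1) - U(t,x_1,x_2) = (u(t,x_1) - u(t,x))(\fI\xi(x_2) - \fI\xi(x_1)),
\end{equation*}
so the hypothesis holds with $\beta = 1-\epsilon$ (hence $\kappa_0 - \beta = 1-\epsilon$), and with constant $M^{(2)}_{D_d,L_1,L_2}\lesssim [u]_{C_T(1-\epsilon,D)}[\fI\xi]_{n,1-\epsilon}$.

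For the parabolic hypothesis \eqref{equ.Schauder_est.two_variable.ass1}, I would compute $(\partial_t - \Delta)_{\bar z}U(z,\bar z)$ using the equation \eqref{equ.renormalized.PAM+square} with $\kappa = 2$ and the identity $-\Delta \fI\xi = \chi(\fD)\xi$, obtaining
\begin{equation*}
(\partial_t - \Delta)U(z,\bar z) = (u(\bar z) - u(z))\xi(\bar x) - Cu(\bar z) - u^2(\bar z) + u(z)(\xi(\bar x) - \chi(\fD)\xi(\bar x)).
\end{equation*}
Convolving with $\Psi^{\bar\delta}$ in $\bar x$, the irregular piece $((u-u(z))\xi)_{\bar\delta} - C u_{\bar\delta}$ is handled by Lemma \ref{lem.est.irregular_term} (applied at the appropriate base point in $B(x,L)$, following the reconstruction argument of \cite{moinat2020space}), contributing $\bar\delta^\beta L^{\kappa_0-\beta}$ terms for $\beta \in \{2-2\epsilon,1-\epsilon,-\epsilon\}$ with coefficients involving $[U]_{C_T(2-2\epsilon,\cdot)}$, $[u]_{C_T(1-\epsilon,\cdot)}$ and $[\nu]_{C_T(1-2\epsilon,\cdot)}$ times the corresponding noise norms. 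The smooth terms $u^2$, $u$ and $(1-\chi)(\fD)\xi$ are bounded uniformly in $\bar\delta$ by $\|u\|^2$, $\|u\|$ and $\|\xi\|_{n,-1-\epsilon}$ respectively, and contribute the $\beta=0$ piece of $M^{(1)}_{D_d,L}$.

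Applying Lemma \ref{lem.Schauder_est.two_variable} then gives a bound on $\sup_d d^{\kappa_0}[U]_{C_T(\kappa_0,D_d,d)}$ in terms of $\|u\|$, $[u]_{1-\epsilon}$, $\|\nu\|$, $[\nu]_{1-2\epsilon}$, the noise norms, and $d^{\kappa_0}[U]_{C_T(\kappa_0,D_d,d)}$ itself (this last appearing through the right hand-side of Lemma \ref{lem.est.irregular_term} and ultimately producing the $B$-term). To match the precise form of \eqref{equ.U_Holder_est}, I would eliminate $[u]_{1-\epsilon}$, $\|\nu\|$ and $[\nu]_{1-2\epsilon}$ via the identity
\begin{equation*}
u(\bar x) - u(x) = \bar U(z,\bar z) + \nu(z)(\bar x - x) + u(z)(\fI\xi(\bar x) - \fI\xi(x)),
\end{equation*}
combined with Corollary \ref{cor.est.spatial_gradient}, which together bound each of these seminorms by an appropriate combination of $\|u\|$ (times noise norms) and $d^{\kappa_0}[U]_{C_T(\kappa_0,D_d,d)}$ (times powers of $d$). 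The main obstacle is the bookkeeping: each term coming out of the Schauder estimate must be split between the $A$-bucket (those multiplying $\|u\|$) and the $B$-bucket (those multiplying $d^{2-2\epsilon}[U]_{C_T(2-2\epsilon,D_d,d)}$), with the exact power of $d$ fixed by the scaling inherent in Lemma \ref{lem.Schauder_est.two_variable}. This scale-tracking argument is the same as in \cite{moinat2020space} and accounts for the polynomial structure of $A$ and $B$ in the statement.
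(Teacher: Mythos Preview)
Your proposal is correct and follows essentially the same approach as the paper: compute $(\partial_t - \Delta)U_\delta$, apply Lemma~\ref{lem.est.irregular_term} at the shifted base point $\bar z \in B(x,L)$ (which produces the extra term $(u(\bar z) - u(z))\xi_\delta(\bar z)$ that you account for implicitly), verify three-point continuity, invoke Lemma~\ref{lem.Schauder_est.two_variable}, and then eliminate $[u]_{1-\epsilon}$, $\|\nu\|$, $[\nu]_{1-2\epsilon}$ via Corollary~\ref{cor.est.spatial_gradient} and the definition of $U$. The paper's proof differs only in presentation: it writes out the intermediate Schauder bound explicitly and uses the notation $J(\xi,\chi)$ for the sup norm of the smooth function $(1-\chi(\fD))\xi$ rather than bounding it by $\|\xi\|_{n,-1-\epsilon}$.
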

    \begin{proof}
        For any $L>0, 0<\delta < 1$, let $x \in D_{L+\delta}, \bar{x} \in B(x,L) \subset D_\delta$. We have 
	        \begin{equation*}
	            \begin{aligned}
	                &(\partial_t - \Delta)U(t,x,\cdot)_\delta(\bar{x}) \\
	                =&\quad \int\Psi^\delta(\bar{x} - \tilde{x})(\partial_t-\Delta_{\tilde{x}})U(t,x,\tilde{x})d\tilde{x}\\
	                =&\quad(u(\xi-C))_\delta(\bar{z}) - u(z)\xi_\delta(\bar{z})-(u^2)_\delta(\bar{z})+u(z)((1 - \chi(\fD))\xi)_\delta(\bar{z})\\
	                =&\quad((u-u(\bar{z}))\xi - Cu)_\delta(\bar{z}) - (u^2)_\delta(\bar{z}) + (u(\bar{z}) - u(z))\xi_\delta(\bar{z})\\
	                &+u(z)((1 - \chi(\fD))\xi)_\delta(\bar{z}).
	            \end{aligned}
	        \end{equation*}
	        Since $(1-\chi)$ is compactly supported, $(1-\chi(\fD))\xi$ is smooth and the supremum norm is bounded by a constant $J(\xi,\chi)$. For other terms, we have estimations
	        \[                                  |(u^2)_\delta(\bar{z})| \leq \|u(t,\cdot)\|^2_{B(\bar{x},\delta)} \leq \|u\|^2_{C_TD},
	        \]
	        and 
	        \begin{eqnarray*}
	            |(u(\bar{z}) - u(z))\xi_\delta(\bar{z})| &\leq& d(x,\bar{x})^{1-\epsilon}[u(t,\cdot)]_{1-\epsilon,B(x,L),L}\|\xi\|_{n,-1-1\epsilon}\delta^{-1-\epsilon}\\
	            &\leq& L^{1-\epsilon}[u(t,\cdot)]_{1-\epsilon,B(x,L),L}\|\xi\|_{n,-1-\epsilon}\delta^{-1-\epsilon}.
	        \end{eqnarray*}
	        For $x,L,\delta,d$, s.t. $x \in D_d, \delta \leq L\leq \frac{d}{4},d\leq d_0$, we have 
	        \begin{equation*}
	            \begin{aligned}
	                &\|(\partial_t -\Delta)U(t,x,\cdot)_\delta\|_{B(x,L)} \\
	                \lesssim& \|u\|_{C_TD}^2 + L^{1-\epsilon}[u]_{C_T(1-\epsilon,D_\delta,L)}\|\xi\|_{n,-1-\epsilon}\delta^{-1-\epsilon} \\
	                &+ \delta^{1-3\epsilon}([u]_{C_T(1-\epsilon,B(x,L+\delta),\delta)}\|\fI\xi\xi\|_{n,-2\epsilon} \\
	            &+[U]_{C_T(2-2\epsilon,B(x,L+\delta),\delta)}\|\xi\|_{n,-1-\epsilon}+[\nu]_{C_T(1-2\epsilon,B(x,L+\delta),\delta)}\|\xi X\|_{n,-\epsilon})\\
	            &+\|u\|_{C_TB(x,L)}\|\fI\xi\xi\|_{n,-2\epsilon}\delta^{-2\epsilon} +\|\nu\|_{C_T(B(x,L))}\|\xi X\|_{n,-\epsilon}\delta^{-\epsilon} + J(\xi,\chi)\|u\|_{C_TD}.
	            \end{aligned}
	        \end{equation*}
	    For $L_1,L_2 \leq \frac{d}{4},x \in D_d,x_1 \in B(x,L_1), x_2 \in B(x_1,L_2)$, we have
	    \begin{equation*}
	        \begin{aligned}
	            &|U(t,x,x_2) - U(t,x,x_1) - U(t,x_1,x_2)|\\ 
	            =& |(u(t,x_1) - u(t,x))(\fI\xi(x_2) - \fI\xi(x_1))| \\
	            \leq& [u(t,\cdot)]_{1-\epsilon,D_{\frac{d}{2}},\frac{d}{2}}[\fI\xi]_{n,1-\epsilon}d(x_1,x)^{1-\epsilon}d(x_1,x_2)^{1-\epsilon}.
	        \end{aligned}
	    \end{equation*}
	    With $\kappa = 2-2\epsilon$ in the Lemma \ref{lem.Schauder_est.two_variable}, we obtain 
	    \begin{equation}\label{equ.est.Holder_U.1}
	        \begin{aligned}
	            &\sup_{d\leq d_0} d^{2-2\epsilon}[U]_{\mathcal{M}_T(2-2\epsilon,D_d,d)} \\
	            \lesssim&\sup_{d\leq d_0}\{d^2\|u\|_{C_TD}^2 + d^2J(\xi,\chi)\|u\|_{C^TD}+ [u]_{C_T(1-\epsilon,D_{\frac{d}{2}},\frac{d}{2})}\|\xi\|_{n,-1-\epsilon}d^{2-2\epsilon} \\
	        &+d^{3-3\epsilon}([u]_{C_T(1-\epsilon,D_{\frac{d}{2}},\frac{d}{2})}\|\fI\xi\xi\|_{n,-2\epsilon} + [U]_{C_T(2-2\epsilon,D_{\frac{d}{2}},\frac{d}{2})}\|\xi\|_{n,-1-\epsilon} \\
	        &+[\nu]_{C_T(1-2\epsilon,D_{\frac{d}{2}},\frac{d}{2})}\|\xi X\|_{n,-\epsilon}) + d^{2-\epsilon}\|\nu\|_{C_T(D_{\frac{d}{2}},\frac{d}{2})}\|\xi X\|_{n,-\epsilon} \\
	        &+ d^{2-2\epsilon} \|u\|_{C_T(D_{\frac{d}{2}})}\|\fI\xi\xi\|_{n,-2\epsilon}\} + \sup_{d\leq d_0}\|U\|_{C_TD_d,d}.
	        \end{aligned}
	    \end{equation}
	    By corollary \ref{cor.est.spatial_gradient}, we have inequalities for $d\leq d_0$
	    \begin{equation}\label{equ.est.nu_supremum}
	        \|\nu\|_{C_TD_d} \lesssim [U]_{C_T(2-2\epsilon,D_d,d)}d^{1-2\epsilon} + \|U\|_{C_T(D_d,d)}d^{-1},
	    \end{equation}
	    and 
	    \begin{equation}\label{equ.est.nu_holder}
	        \begin{aligned}
	        [\nu]_{C_T(1-2\epsilon,D_d,d)}\lesssim& [U]_{C_T(2-2\epsilon,D_d,d)} + d^{-2+2\epsilon}\|U\|_{C_T(D_d,d)} \\
	        &+ [u]_{C_T(1-\epsilon,D_{\frac{d}{4}},\frac{d}{4})}[\fI\xi]_{n,1-\epsilon}.
	        \end{aligned}
	    \end{equation}
	    We have furthermore,
	    \begin{equation}\label{equ.est.u_holder}
	        [u]_{C_T(1-\epsilon,D_d,d)} \lesssim [U]_{C_T(2-2\epsilon,D_d,d)}d^{1-\epsilon} + \|u\|_{C_TD_d}[\fI\xi]_{n,1-\epsilon} + \|\nu\|_{C_TD_d}d^\epsilon,
	    \end{equation}
	    and 
	    \begin{equation}\label{equ.est.U_supremum}
	        \|U\|_{C_T(D_d,d)} \lesssim 2\|u\|_{C_TD_d} + \|u\|_{C_TD_d}[\fI\xi]_{n,1-\epsilon}d^{1-\epsilon}.
	    \end{equation}
	    Now substitute the above four inequalities (\ref{equ.est.nu_supremum} - \ref{equ.est.U_supremum}) for equation $(\ref{equ.est.Holder_U.1})$, and we obtain the result.
    \end{proof}

    With the help of these lemmas, we are now ready to give the proof of Theorem \ref{thm.local_est.renormalized.PAM+square}. We start with a lemma
    \begin{lem}\label{lem.contra.itera.est}
        There exist constants $c_1 \in (0,1)$ and $C_0 >0$ that only depend on $\epsilon, J(\xi,\chi), \Psi, T$ and such that the following statement is true:
        Suppose for some $r>0$ we have
        \[
            c_0\|u\|_{C_T(\fP{n-r})}\geq \max\left\{\|\tau\|_{n,|\tau|}^{\frac{2}{n_\tau(1-\epsilon)}}: \tau = \xi,\xi X,\fI\xi\xi\right\}
        \]
        for some $c_0 \le c_1 $. Then we have for any $R < n-r$,
        \begin{equation}
             \|u\|_{C_T\fP{n-r-R}} \leq \max\left\{\frac{2C_0^2}{R^2},\frac{1}{2}\|u\|_{C_T(\fP{n-r})}\right\}.
        \end{equation}
    \end{lem}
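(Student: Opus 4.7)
The plan is to combine Lemma \ref{lem.u_supremum_est} (the interior supremum estimate) with the Schauder-type bound of Lemma \ref{lem.U_holer_est} for $U$ and the auxiliary inequalities \eqref{equ.est.nu_supremum}--\eqref{equ.est.U_supremum} for $\nu, [u], [\nu], \|U\|$. The driving idea is that under the smallness hypothesis $\|\tau\|_{n,|\tau|}^{2/(n_\tau(1-\epsilon))}\leq c_0\|u\|_{C_T(\fP{n-r})}$, every explicit noise norm appearing in these estimates can be replaced by a power $(c_0\|u\|_{C_T(\fP{n-r})})^{n_\tau(1-\epsilon)/2}$, so that, after choosing $c_0$ small, the noise contributions become subdominant and the dissipative nonlinearity $-u^2$ forces a geometric drop of $\|u\|$ on smaller regions unless $\|u\|$ is already of order $R^{-2}$.

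Concretely, I set $R' = R/2$ in Lemma~\ref{lem.u_supremum_est} so that $(R-R')^{-2} = 4/R^2$, work on the enlarged interior region $D:=\fP{n-r-R/2}$, and apply Lemma~\ref{lem.U_holer_est} with $d_0 \simeq R/8$ so that $D_{d_0}$ contains $\fP{n-r-3R/4}$. After substituting the smallness hypothesis in the coefficient $B$ of Lemma \ref{lem.U_holer_est}, every summand in $B$ is a product of a positive power of $d\leq d_0$ and a positive power of $c_0 \|u\|_{C_T(\fP{n-r})}$; hence $B$ can be made $\leq 1/2$ by choosing $c_1$ (and hence $c_0$) small and by further restricting $d$, so the term $B\, d^{2-2\epsilon}[U]$ on the right-hand side of \eqref{equ.U_Holder_est} is absorbed into the left-hand side. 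This yields
\[
\sup_{d\leq d_0}d^{2-2\epsilon}[U]_{C_T(2-2\epsilon,D_d,d)} \lesssim \sup_{d\leq d_0} A(d)\,\|u\|_{C_TD},
\]
and after the same substitution $A(d)$ reduces to at most $d^2\|u\|_{C_T\fP{n-r}}^2 + 1$ plus terms of the form $c_0^{\beta}\|u\|_{C_T\fP{n-r}}^{\beta}$ with $\beta>0$.

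Feeding this bound together with \eqref{equ.est.nu_supremum}--\eqref{equ.est.U_supremum} into Corollary~\ref{cor.renormalized.est} controls $\|(u\xi-Cu)_\delta\|_{C_T(D_\delta)}$, and substituting everything back into Lemma~\ref{lem.u_supremum_est} with the choice $\delta = \eta R$ for a small constant $\eta=\eta(\epsilon,\Psi,J(\xi,\chi))$ produces
\[
\|u\|_{C_T\fP{n-r-R}} \leq \frac{C_0^2}{R^2} + c_0^{\alpha}\,\|u\|_{C_T(\fP{n-r})},
\]
for some $\alpha>0$ and a constant $C_0$ depending only on $\epsilon, J(\xi,\chi), \Psi, T$. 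Choosing $c_1$ so that $c_0^\alpha \leq 1/4$ and using $a+b\leq 2\max(a,b)$ completes the proof.

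The main obstacle is the bookkeeping of exponents. In each of the four candidates inside the maximum in Lemma~\ref{lem.u_supremum_est}, one must verify that the total exponent of $\|u\|_{C_T(\fP{n-r})}$ coming out of the substitution is strictly less than $1$, so that a strictly positive power of $c_0$ survives and can be tuned. The delicate term is $\|u\|^{1/2}\delta^{(1-\epsilon)/2}[u]^{1/2}$, where the expansion of $[u]$ through \eqref{equ.est.u_holder} involves $[U]$ and $\|\nu\|$; one has to check that after the smallness substitution these contribute a factor $(c_0\|u\|_{C_T\fP{n-r}})^{\gamma}\|u\|_{C_T\fP{n-r}}^{1-\gamma}$ with $\gamma>0$, rather than pure $\|u\|_{C_T\fP{n-r}}$, which is exactly what the $n_\tau$-counting in the hypothesis is designed to guarantee.
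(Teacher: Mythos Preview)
Your overall strategy---absorb the noise norms via the smallness hypothesis, buckle Lemma~\ref{lem.U_holer_est}, and feed everything back into Lemma~\ref{lem.u_supremum_est}---is exactly right and is what the paper does. The gap is in your choice of scales: you tie $d_0\simeq R/8$ and $\delta=\eta R$ to the external parameter $R$, whereas the paper's key observation is to take $d_0=\|u\|_{C_T(\fP{n-r})}^{-1/2}$, then $R'=d_0$ and $\delta=d_0/k$ for a large integer $k$. With that choice every term in $A$ and $B$ becomes a pure power of $c_0$ (e.g.\ $d_0^{1-\epsilon}\|\xi\|_{n,-1-\epsilon}\le d_0^{1-\epsilon}(c_0\|u\|)^{(1-\epsilon)/2}=c_0^{(1-\epsilon)/2}$), so $B\lesssim c_0^{(1-\epsilon)/2}$ can be absorbed and $A\lesssim 1+J(\xi,\chi)$ uniformly. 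Then the four terms in Lemma~\ref{lem.u_supremum_est} collapse to $\max\{(R-d_0)^{-2},\,\|u\|\cdot(\text{powers of }k^{-1}\text{ and }c_0)\}$, and one first fixes $k$ large, then $c_0$ small.

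With your scales this cancellation does not happen. At $d=d_0=R/8$ the summand $d^{1-\epsilon}\|\xi\|_{n,-1-\epsilon}$ in $B$ is $\sim (R)^{1-\epsilon}(c_0\|u\|)^{(1-\epsilon)/2}$, which depends on the uncontrolled product $R\,\|u\|^{1/2}$ and cannot be made $\le\tfrac12$ by choosing $c_0$ alone; your phrase ``further restricting $d$'' amounts to shrinking $d_0$ to something of order $\|u\|^{-1/2}$, which contradicts $d_0\simeq R/8$ for general $R$. The same obstruction appears downstream: in $\|(u(\xi-C))_\delta\|^{1/2}$ the contribution $\|u\|\,\|\xi\|_{n,-1-\epsilon}\,\delta^{-1-\epsilon}$ becomes, after taking the square root, $c_0^{(1-\epsilon)/4}\|u\|^{(3-\epsilon)/4}(\eta R)^{-(1+\epsilon)/2}$, and forcing this $\lesssim\|u\|$ requires $\eta R\gtrsim\|u\|^{-1/2}$; conversely the $d^2\|u\|_{C_TD}$ term in $A$ produces a contribution to $\delta^{1-\epsilon}[u]$ of order $R^2\|u\|^2$, which forces $R\lesssim\|u\|^{-1/2}$. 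So the only scale at which your scheme closes is $R\sim\|u\|^{-1/2}$, not arbitrary $R$. Replace $d_0=R/8,\ \delta=\eta R$ by $d_0=\|u\|_{C_T\fP{n-r}}^{-1/2},\ R'=d_0,\ \delta=d_0/k$, and the rest of your outline goes through (the case $R\le d_0$ is trivial since then already $\|u\|\le 1/R^2$).
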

    \begin{proof}
        Let $D = \fP{n-r}$. We start by estimating $A,B$ in lemma \ref{lem.U_holer_est}. By choosing $d_0 = \|u\|_{C_TD}^{-\frac{1}{2}} \leq 1$ we have
        \[
	        A \lesssim 1 + J(\xi,\chi) + c_0^{\frac{1-\epsilon}{2}} + c_0^{1-\epsilon} + c_0^{\frac{3(1-\epsilon)}{2}} \lesssim 1 + J(\xi,\chi) + c_0^{\frac{1-\epsilon}{2}},
	    \]
	    and 
	    \[
	        B \lesssim c_0^{\frac{1-\epsilon}{2}}.
	    \]
        There then exists a constant $c_1 < 1$ such that whenever $c_0 \leq c_1$ we have the inequality
	    \begin{equation}\label{equ.est.U_Holder_to_u}
	        \sup_{d\leq d_0}d^{2-2\epsilon}[U]_{C_T(2-2\epsilon,D_d,d)} \lesssim (1 +J(\xi,\chi)+ c_0^{\frac{1-\epsilon}{2}})\|u\|_{C_TD}
	    \end{equation}
	    where $'\lesssim'$ does not depends on the value of $c_0$ as long as $c_0\leq c_1$. Then by inequalities $(\ref{equ.est.nu_supremum} - \ref{equ.est.U_supremum})$, we have the estimation
	    \begin{equation}\label{equ.est.all_to_u}
	        \sup_{d\leq d_0}\{\|U\|_{D_d,d},d^{1-\epsilon}[u]_{C_T(1-\epsilon,D_d,d)},d\|\nu\|_{C_TD_d},d^{2-2\epsilon}[\nu]_{C_T(1-2\epsilon,D_d,d)}\}\lesssim \|u\|_{C_TD}.
	    \end{equation}
        Now we combine lemma \ref{lem.u_supremum_est}, lemma \ref{lem.est.irregular_term} with $\theta = 1$ and equations \eqref{equ.est.all_to_u},\eqref{equ.est.U_Holder_to_u}  with $D_d = \fP{n-r-R'+\delta},R'=d_0,\delta=\frac{d_0}{k},d = d_0\frac{k-1}{k}$ for some $k>2$, to obtain 
	    \begin{equation}
	    \begin{aligned}
	        \|u\|_{C_T\fP{n-r-R}} \lesssim& \max\left\{\frac{1}{(R-R')^2},\|u\|_{C_T\fP{n-r}}((k-1)^{\epsilon-1} + (k-1)^{\frac{\epsilon-1}{2}}),\right.\\
	        &\|u\|_{C_T\fP{n-r}}\cdot
	        \left(c_0^{\frac{1-\epsilon}{2}}(k-1)^{\frac{\epsilon-1}{2}}k^\epsilon + c_0^{\frac{1-\epsilon}{4}}k^{\frac{1+\epsilon}{4}}(k-1)^{\epsilon-1} +c_0^{\frac{1-\epsilon}{2}}k^\epsilon\right.\\ 
	        &+ \left.\left.c_0^{\frac{1-\epsilon}{4}}k^{\frac{1+\epsilon}{2}} + c_0^{\frac{1-\epsilon}{4}}(k-1)^{-\frac{1}{2}}k^{\frac{1+\epsilon}{2}}\right)\right\}.
	    \end{aligned}
	    \end{equation}
	    Now first choose $k$ large enough and then $c_0$ small enough such that we finally have for a constant $C_0 > 0$
	    \[
	        \|u\|_{C_T\fP{n-r-R}} \leq \max\left\{\frac{C_0^2}{(R-R')^2},\frac{1}{2}\|u\|_{C_T\fP{n-r}}\right\}\leq \max\left\{\frac{2C_0^2}{R^2},\frac{1}{2}\|u\|_{C_T\fP{n-r}}\right\}.
	    \]
	    Here $c_0,k,C_0$ only depends on $\epsilon,J(\xi,\chi),\Psi$, and $T$. In particular, $C_0$ does not depend on $n,r,R$. 
    \end{proof}
    Now we can finish the proof of theorem \ref{thm.local_est.renormalized.PAM+square}.
    \begin{proof}[Proof of Theorem \ref{thm.local_est.renormalized.PAM+square}]
	    Without loss of generality we take $\kappa=2$ in equation (\ref{equ.renormalized.PAM+square}).  Suppose for some $D \subset \fP{n}$, we have 
	    \begin{equation}\label{equ.ass}
	        c_0\|u\|_{C_T(D)} \geq \max\left\{\|\tau\|_{n,|\tau|}^{\frac{2}{n_\tau(1-\epsilon)}}: \tau = \xi,\xi X,\fI\xi\xi\right\}
	    \end{equation}
	    for some $c_0 \le c_1$ with $c_1$ from lemma \ref{lem.contra.itera.est}. Then this lemma gives us
	    \[
	        \|u\|_{C_T\fP{n-R}} \leq \max\left\{\frac{2C_0^2}{R^2},\frac{1}{2}\|u\|_{C_T\fP{n}}\right\},
	    \]
	    if \eqref{equ.ass} holds on $D = \fP{n}$. In particular, if $C_0 > 1 + \frac{\sqrt{2}}{2}$ and $R = \frac{2C_0}{\|u\|_{C_T\fP{n}}}$, then we have 
	    \[
	        \|u\|_{C_T\fP{n-R}} \leq \frac{1}{2}\|u\|_{C_T\fP{n}}
	    \]
	    which is true for all $n \in \RR^+$ and $D = \fP{n}$ satisfies \eqref{equ.ass}. Now we begin with $R_0 = 0$ and choose $R_i$ recursively via
	    \[
	        R_{i+1}- R_i = 2c_0\|u\|_{C_T\fP{n-R_i}},
	    \]
	    except that we stop if $R_{i} > n$ (set $R_i = n$ in this case) or if $D = \fP{n}$ does not satisfy \eqref{equ.ass}. Suppose we obtain the sequence $0=R_0 < R_1 < \cdots < R_N$. We have for all $k \leq i \leq N-1$,
	    \[
	        \|u\|_{C_T\fP{n-R_i}} \leq \left(\frac{1}{2}\right)^{i-k}\|u\|_{C_T\fP{n-R_k}}.
	    \]
	    Thus for $i\leq N-1$, we have
	    \begin{eqnarray*}
	        R_i = \sum_{k=0}^{i-1}R_{i+1} - R_i &=& 2C_0\sum_{k=0}^{i-1}\|u\|_{C_T\fP{n-R_k}}^{-\frac{1}{2}} \\
	        &\leq& 2C_0\sum_{k=0}^{i-1}\|u\|_{C_T\fP{n-R_i}}^{-\frac{1}{2}}2^{-\frac{i-k}{2}}\\
	        &\lesssim& 2C_0\|u\|_{C_T\fP{n-R_i}}^{-\frac{1}{2}}.
	    \end{eqnarray*}
	    Thus we know $\|u\|_{C_T\fP{n-R_i}} \lesssim \frac{1}{R_i^2}$. Now for $R \in (R_i,R_{i+1})$, we have 
	    \[
	        R \leq R_{i+1} = R_{i+1} - R_i + R_i \leq 2C_0\|u\|_{C_T\fP{n-R_i}}^{-\frac{1}{2}} + R_i \lesssim \|u\|_{C_T\fP{n-R_i}}^{-\frac{1}{2}} \leq \|u\|_{C_T\fP{n-R}}^{-\frac{1}{2}}.
	    \]
	    This ends the proof.
    \end{proof}

By a slight extension of the arguments we can also control the regularity of $u$:
    
\begin{cor}\label{cor.est_all}
	    We have for $\tilde{\theta} = \theta^{\frac{4}{1-\epsilon}}$
	    \begin{equation}
	        \|u\|_{C_T(1-\epsilon,\fP{r},\tilde{\theta})} \lesssim 1 + \sum_{\tau \in \mathcal T} \|\tau\|_{r+1,|\tau|,\theta}^{\frac{4}{n_\tau(1-\epsilon)}}.
	    \end{equation}
	    The same bound holds for $\|\nu\|_{C_T(1-2\epsilon,\fP{r},\tilde{\theta})}, \|U\|_{C_T(\fP{r},\tilde{\theta})}$ and $[U]_{C_T(2-2\epsilon,\fP{r},\tilde{\theta})}$.
\end{cor}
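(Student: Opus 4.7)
The plan is to combine the interior Hölder estimates implicit in the proof of Theorem \ref{thm.local_est.renormalized.PAM+square} with the weighted $L^\infty$ bound of Corollary \ref{cor.local_est_weighted.renormalized.PAM+square}, via a localization-and-squaring argument. The key observation is that, in the regime $c_0\|u\|_{C_TD}\geq \max_\tau \|\tau\|_{n,|\tau|}^{2/(n_\tau(1-\epsilon))}$ with $c_0\leq c_1$, the inequality \eqref{equ.est.all_to_u} simultaneously controls all five quantities of interest---$[u]_{1-\epsilon}$, $\|\nu\|$, $[\nu]_{1-2\epsilon}$, $\|U\|$, and $[U]_{2-2\epsilon}$---on any sub-domain $D_d$ at scale $d\leq d_0$, uniformly by $\|u\|_{C_TD}$. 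With the natural balancing choice $d_0=\|u\|_{C_TD}^{-1/2}$ (or $d_0=1$ if $\|u\|<1$), each quantity at scale $d_0$ is bounded by $\|u\|_{C_TD}^{\alpha}$ with $\alpha\leq 2-\epsilon<2$.

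The first step I would carry out is to enlarge the scale from $d_0$ to $1$. For any $\beta\in(0,2)$ and points satisfying $d_0\leq|x-\bar x|\leq 1$, the triangle inequality together with the $L^\infty$ bound gives $|f(x)-f(\bar x)|/|x-\bar x|^\beta\lesssim \|f\|_{L^\infty}d_0^{-\beta}$; applied to $f\in\{u,\nu,\bar U\}$ with $\beta\in\{1-\epsilon,1-2\epsilon,2-2\epsilon\}$, this adds only contributions of the same order $\|u\|_{C_TD}^{\alpha}$ with $\alpha\leq 2$. Consequently, every Hölder seminorm at scale $1$ is controlled by $\|u\|_{C_TD}^{2}+1$ up to a constant depending only on $\epsilon,\kappa,\Psi$ and $J(\xi,\chi)$.

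Next, I would localize and introduce the weight. For each $x_0\in\fP{r}$, set $D=B(x_0,1)\subset\fP{r+1}$; by Definition \ref{def.weight} one has $\sup_{x\in D}\theta(x)\lesssim\theta(x_0)$. Applying Corollary \ref{cor.local_est_weighted.renormalized.PAM+square} on $\fP{r+1}$ yields
\[
    \|u\|_{C_T B(x_0,1)}\lesssim \theta(x_0)^{\frac{2}{1-\epsilon}}\Bigl(1+\sum_{\tau\in\mathcal T}\|\tau\|_{r+1,|\tau|,\theta}^{\frac{2}{n_\tau(1-\epsilon)}}\Bigr).
\]
Squaring this inequality and using $(1+\sum_i a_i)^2\lesssim 1+\sum_i a_i^2$ for the finite family $\mathcal T$, then combining with the scale-$1$ Hölder bound of order $\|u\|_{C_TD}^{2}+1$, gives after division by $\tilde\theta(x_0)=\theta(x_0)^{4/(1-\epsilon)}$ precisely the right-hand side of the corollary. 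Taking the supremum over $x_0\in\fP{r}$ produces the stated estimate for each of the four listed quantities simultaneously, since the first step already controls all of them.

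The main obstacle is the degenerate case in which the assumption \eqref{equ.ass} fails on some $B(x_0,1)$, i.e.\ $\|u\|_{C_T B(x_0,1)}$ is small compared with the noise norms and one cannot invoke \eqref{equ.est.all_to_u} directly. In that regime I would return to the raw bound \eqref{equ.U_Holder_est} from Lemma \ref{lem.U_holer_est} and choose $d_0$ of order $(1+\text{noise norms})^{-1/(1-\epsilon)}$ so that the coefficient $B$ is absorbed into the left-hand side; the remaining coefficient $A$ then already carries the correct noise dependence with exponent at most $4/(n_\tau(1-\epsilon))$. Merging the two cases yields the unified estimate, and verbatim repetition of the argument for $\|\nu\|_{C_T(1-2\epsilon,\fP{r},\tilde\theta)}$, $\|U\|_{C_T(\fP{r},\tilde\theta)}$ and $[U]_{C_T(2-2\epsilon,\fP{r},\tilde\theta)}$ completes the proof.
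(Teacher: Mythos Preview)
Your proposal is correct and reaches the right conclusion, but the paper's argument is more economical because it avoids your case distinction altogether. The key observation you are missing is that your ``degenerate-case'' choice of scale already works uniformly: the paper sets $d_0 = c_0 C^{-1/2}$ with $C = \max_{\tau\in\mathcal T} \|\tau\|_{n,|\tau|}^{2/(n_\tau(1-\epsilon))}$ \emph{from the outset}, and plugs this directly into \eqref{equ.U_Holder_est}. With this $d_0$ the coefficient $B$ satisfies $B\lesssim c_0^{(1-\epsilon)/2}$ unconditionally (no appeal to \eqref{equ.ass}), so the $[U]$ term on the right is absorbed and one obtains $[U]_{C_T(2-2\epsilon,D_{d_0},d_0)}\lesssim C^{1-\epsilon}\|u\|_{C_TD}$ in one line. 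Feeding this into \eqref{equ.est.nu_supremum}--\eqref{equ.est.u_holder}, extending from scale $d_0$ to scale $1$ exactly as you do, and invoking Theorem~\ref{thm.local_est.renormalized.PAM+square} for the bound $\|u\|_{C_TD}\lesssim 1+C$ finishes the proof. Your detour through \eqref{equ.est.all_to_u} under assumption \eqref{equ.ass}, followed by a separate small-$u$ treatment and a merge, recovers the same estimate but with redundant bookkeeping; once you realize the noise-based scale $d_0\sim C^{-1/2}$ makes $B$ small regardless of $\|u\|$, the split becomes unnecessary. The localization-and-squaring step you describe (passing from balls $B(x_0,1)$ to the weighted norm on $\fP{r}$) is correct and is the same mechanism as in Corollary~\ref{cor.local_est_weighted.renormalized.PAM+square}; the paper leaves this implicit.
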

 \begin{proof}
     Let $C = \max\left\{\|\tau\|_{n,|\tau|}^{\frac{2}{n_\tau(1-\epsilon)}}: \tau \in \mathcal T \right\}$, choose $d_0 = c_0C^{-\frac{1}{2}}$ and substitute it into the inequality \eqref{equ.U_Holder_est} with $D = \fP{n-\frac{1}{2}}$. This leads to
     \[
        c_0^{2-2\epsilon}C^{-1+\epsilon}[U]_{C_T(2-2\epsilon,D_{d_0},d_0)} \lesssim_{c_0} \|u\|_{C_T(D)},
     \]
     for $c_0 < c_1$, where $c_1$ is defined in lemma \ref{lem.contra.itera.est} and does not depend on $n$. To estimate the Hölder norm of $u$, we combine equations \eqref{equ.est.nu_supremum},\eqref{equ.est.U_supremum} and equation \eqref{equ.est.u_holder} to get 
     \begin{equation*}
         [u]_{C_T(1-\epsilon,D_d,d)} \lesssim [U]_{C_T(2-2\epsilon,D_d,d)}d^{1-\epsilon} + \|u\|_{C_TD_d}[\fI\xi]_{n,1-\epsilon} + \|u\|_{C_TD_d}d^{-1+\epsilon}
     \end{equation*}
     for any $d\leq d_0$. Now we substitute $d_0 = c_0C^{-\frac{1}{2}}$ and obtain
     \[
        [u]_{C_T(1-\epsilon,D_{d_0},d_0)} \lesssim (c_0^{-1+\epsilon}C^{\frac{1-\epsilon}{2}} + C^{\frac{1-\epsilon}{2}})\|u\|_{C_T(D)}.
     \]
     Thus, we have actually 
     \[
     [u]_{C_T(1-\epsilon,D_{d_0})}\lesssim (c_0^{-1+\epsilon} + 1)C^{\frac{1-\epsilon}{2}}\|u\|_{C_T(D)} + d_0^{-1+\epsilon}\|u\|_{C_T(D)}.
     \]
     This gives the result for $[u]_{C_T(1-\epsilon,D_{d_0})}$. The bounds for $\|\nu\|_{C_T(1-2\epsilon,\fP{r},\tilde{\theta})}, \|U\|_{C_T(\fP{r},\tilde{\theta})}$ and $[U]_{C_T(2-2\epsilon,\fP{r},\tilde{\theta})}$ follow from similar arguments.
 \end{proof}
    
\section{Proof of the main theorem}\label{sec.proof_of_main_thm}
In this section, we will discuss the existence and uniqueness of solutions to \eqref{equ.PAM+square+phi} and conclude the  proof of Theorem \ref{thm.rsbm_compact_support_property}. To achieve both, we have to control space-time regularity rather than  space regularity only, which can be achieved with the help of the following Schauder estimation.

	\begin{lem}\label{lem.Schauder_est.heat}
	   Suppose $u$ is a mild solution to the heat equation
	   \begin{equation}\label{equ.heat}
	       \left\{
	            \begin{array}{cc}
	                 (\partial_t - \Delta)u = f,&  \text{ in }[0,T]\times\RR^2,\\
	                 u(0,\cdot) = u_0,& \text{ on }\{0\}\times \RR^2.
	            \end{array}
	       \right.
	   \end{equation}
	   Let $\theta\in\pmb{\varrho}(\pmb{\omega})$ be a weight such that $\theta(x)\geq 1$. Then we have for $\alpha \in (0,2)$:
	   \begin{equation}
	       \|u\|_{\fL^\alpha(\RR^2,\theta)} \lesssim \|f\|_{C_T(\alpha-2,\RR^2,\theta)} + \|u_0\|_{\alpha,\RR^2,\theta}.
	   \end{equation}
	\end{lem}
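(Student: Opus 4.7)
The plan is to use the mild (Duhamel) formulation
\[
u(t) = P_t u_0 + \int_0^t P_{t-s} f(s)\, ds, \qquad P_t = e^{t\Delta},
\]
and decompose $u$ into Littlewood-Paley blocks, exploiting the equivalence established earlier in the paper between the local H\"older-type norms $\|\cdot\|_{\alpha,\theta}$ and the weighted Besov norms $B^{\alpha}_{\infty,\infty}(\RR^2,\theta)$. In Besov language the assertion reads
\[
\sup_{t\le T}\|u(t)\|_{B^{\alpha}_{\infty,\infty}(\theta)} + \sup_{s<t\le T}\frac{\|u(t)-u(s)\|_{B^{0}_{\infty,\infty}(\theta)}}{|t-s|^{\alpha/2}} \lesssim \|u_0\|_{B^{\alpha}_{\infty,\infty}(\theta)} + \sup_{t\le T}\|f(t)\|_{B^{\alpha-2}_{\infty,\infty}(\theta)},
\]
which is a standard weighted Schauder estimate.

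The first step is to record the two heat-semigroup bounds in weighted Besov spaces: for any $\gamma\ge 0$,
\[
\|P_t g\|_{B^{\alpha+\gamma}_{\infty,\infty}(\theta)} \lesssim t^{-\gamma/2}\|g\|_{B^{\alpha}_{\infty,\infty}(\theta)},
\qquad \|(I-P_t)g\|_{B^{\alpha-\gamma}_{\infty,\infty}(\theta)} \lesssim t^{\gamma/2}\|g\|_{B^{\alpha}_{\infty,\infty}(\theta)} \text{ for }\gamma\in[0,2].
\]
The block-level version $\|P_t \Delta_j g\|_{L^\infty(\theta^{-1})} \lesssim e^{-c 2^{2j}t}\|\Delta_j g\|_{L^\infty(\theta^{-1})}$ is the usual Bernstein-type estimate for the heat kernel; to transfer it through the weight one uses that any $\theta\in\pmb{\varrho}(\pmb{\omega})$ satisfies $\theta(x)\lesssim \theta(y)e^{\lambda\omega(x-y)}$ with sub-exponential $\omega$, so that the Gaussian-decay kernel of $P_t$ absorbs the weight; the needed estimates are essentially contained in \cite[Section~3]{martin2019paracontrolled} modulo the change of convention $\theta\leftrightarrow \theta^{-1}$ noted after the definition of $B^\alpha_{p,q}(\RR^d,\theta)$.

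With these two ingredients, the spatial bound is immediate: applying $\Delta_j$ to Duhamel and taking $L^\infty(\theta^{-1})$ norms,
\[
\|\Delta_j u(t)\|_{L^\infty(\theta^{-1})} \lesssim e^{-c 2^{2j}t} 2^{-j\alpha}\|u_0\|_{B^{\alpha}_{\infty,\infty}(\theta)} + \int_0^t 2^{2j} e^{-c 2^{2j}(t-s)}\,ds\cdot 2^{-j\alpha}\|f\|_{C_T B^{\alpha-2}_{\infty,\infty}(\theta)},
\]
and the $s$-integral is bounded by a constant, so multiplying by $2^{j\alpha}$ and taking the $\ell^\infty$ norm in $j$ gives the desired estimate on $\|u(t)\|_{B^{\alpha}_{\infty,\infty}(\theta)}$. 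For the temporal part I would write, for $0\le s<t\le T$,
\[
u(t)-u(s) = (P_{t-s}-I)u(s) + \int_s^t P_{t-r}f(r)\,dr,
\]
apply the second semigroup estimate with $\gamma=\alpha$ to the first summand (using the bound on $\|u(s)\|_{B^{\alpha}_{\infty,\infty}(\theta)}$ just obtained), and bound the second summand block-by-block as above with the additional time factor producing $|t-s|^{\alpha/2}$ from $\int_s^t 2^{2j} e^{-c2^{2j}(t-r)}dr$ being controlled by $\min(2^{2j}(t-s),1)$ and hence by $(2^{2j}(t-s))^{\alpha/2}$.

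The only nontrivial point is the compatibility of the weight with the heat semigroup, which one checks by combining the pointwise submultiplicative estimate on $\theta$ with the fact that $|P_t(x,y)| = (4\pi t)^{-1}e^{-|x-y|^2/(4t)}$ decays faster than any $e^{\lambda \omega(x-y)}$ for $\omega\in\pmb{\omega}$; I expect this to be the main technical obstacle, but it is routine and has essentially been carried out in \cite{martin2019paracontrolled}. The hypothesis $\theta(x)\ge 1$ is used only to avoid the trivial endpoint $\alpha=0$ in comparing norms.
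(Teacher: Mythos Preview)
Your proposal is correct and follows essentially the same route as the paper: the paper's proof consists entirely of a reference to \cite[Proposition~3.6 and Lemma~3.10]{martin2019paracontrolled} (the discrete case, passed to the continuum by a limit) and to \cite{gubinelli2019global}, and the argument in those references is precisely the Littlewood--Paley/Duhamel computation you sketch, including the block-wise heat-semigroup bound and the weight compatibility via the submultiplicative estimate on $\theta$. The only minor difference is that you work directly in the continuum rather than via a discrete-to-continuous limit, which if anything is slightly cleaner.
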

	\begin{proof}
	    See proposition 3.6 and lemma 3.10 in \cite{martin2019paracontrolled} for a proof in the discrete case, which leads to the continuous case via a limiting argument. See also lemma 2.10 in \cite{gubinelli2019global} for a slightly different equation. 
	\end{proof}

Apart from the lemma \eqref{lem.Schauder_est.heat}, another difficulty we meet in solving the singular SPDE is the multiplication of distributions. For example, the product of $\xi$ and $u$, which are of regularities $-1-$ and $1-$, is ill-defined without imposing further structural conditions. We use paracontrolled distributions to overcome this issue. 

 \begin{definition}
    For a weight $\theta \in \pmb{\varrho}(\pmb{\omega})$, we say $u$ is \emph{paracontrolled} if $u \in C^{1-\epsilon}(\RR^2,\theta)$ and 
        \[
            u^{\sharp} = u - u\para \fI\xi \in C^{1+2\epsilon}(\RR^2,\theta).
        \]
        For such $u$ we set
        \[
            \fH u = \Delta u + \xi\para u + u\para \xi + u^{\sharp}\odot\xi +C_1(u,\fI\xi,\xi) + u(\fI\xi\odot\xi).
        \]
    where $C_1(u,\fI\xi,\xi) = (u\para \fI\xi)\odot\xi - u(\fI\xi\odot \xi)$. From~\cite{perkowski2021rough} we know that $\fH u$ is a well-defined distribution in $C^{-1-\epsilon}(\RR^2,p(a)\theta)$ for all $a>0$.
\end{definition}

By analysing the equation of singular part $u^\sharp$, we could obtain the solution theory of any regularized version of PAM. Here we will take constants $C_\alpha$ in the assumption $\ref{ass.rsbm}$ and consider the regularized equation 
     
\begin{equation}\label{equ.renormalized.PAM+f}
        \left\{
        \begin{array}{cc}
            \partial_t u^\alpha  = \fH^\alpha u^\alpha + f^\alpha &  \text{ in }\mathbb{R}_+\times \RR^2\\
             u^\alpha(0,\cdot) = u^\alpha_0& \text{ on } \{0\} \times \RR^2
        \end{array}
    \right.
\end{equation}
where $\fH^\alpha = \Delta +\xi_\alpha-C_\alpha$. We also denote $\fT^\alpha$ to be the semi-group generated by the operator $\fH^\alpha$, then solution $u_\alpha$ to the above have a mild solution representation
\[
    u_\alpha = \int_0^t \fT^\alpha_{t-s}f^\alpha_sds + \fT_tu^\alpha_0.
\]

We here give a simple version of the estimations of solutions of equation \eqref{equ.renormalized.PAM+f} and their limit. See \cite{martin2019paracontrolled} for more detail.  

\begin{theorem}\label{thm.PAM.solution.estimation}
    For $\epsilon\in\left(0,\frac{1}{4}\right)$, taking the weight $\theta$ to be $e(l)$ for some $l \in\RR$, if $f^\alpha \in C_T(-1+2\epsilon,\RR^2,e(l)), u_0^\alpha \in C^{1+2\epsilon}(\RR^2,e(l))$ and 
    \[
        f^\alpha \rightarrow f \text{ in }C_T(-1+2\epsilon,\RR^2,e(l)),\qquad u_0^\alpha\rightarrow u_0 \text{ in }C^{1+2\epsilon}(\RR^2,e(l+\cdot))
    \]
    then we have 
    \[
        u^\alpha \rightarrow u = \int_0^t \fT_{t-s}f_sds + \fT_tu_0\text{ in } \fL^{1-\epsilon}(\RR^2,e(l+\cdot)),
    \]
    where $T$ is the limit operator of $T^\alpha$ and 
    \[
        \|u\|_{\fL^{1-\epsilon}(\RR^2,e(l+\cdot))}\lesssim \|u_0\|_{1+2\epsilon,\RR^2,e(l)} + \|f\|_{C_T(-1+2\epsilon,\RR^2,e(l+\cdot))}.
    \]
\end{theorem}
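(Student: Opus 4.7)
The plan is to carry out the paracontrolled analysis of \cite{gubinelli2015paracontrolled,martin2019paracontrolled} in the weighted setting with the time-dependent exponential weight $e(l+\cdot)$. The role of the time-dependence is to compensate for the spatial growth that the heat kernel sees when convolving against distributions living in weighted spaces: at each time $t$ one trades a bit of weight for a small amount of regularity, which is essential for global-in-time solvability against white noise.

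First I would rewrite the ill-defined product $\xi_\alpha u^\alpha$ using Bony's decomposition as
\[
    \xi_\alpha u^\alpha = \xi_\alpha \para u^\alpha + u^\alpha \para \xi_\alpha + \xi_\alpha \reso u^\alpha,
\]
and make the paracontrolled ansatz $u^\alpha = u^\alpha \para \fI\xi_\alpha + u^{\alpha,\sharp}$ with $u^{\alpha,\sharp}$ of regularity $1+2\epsilon$. Substituting this into the resonant term gives
\[
    \xi_\alpha \reso u^\alpha = \xi_\alpha \reso u^{\alpha,\sharp} + C_1(u^\alpha,\fI\xi_\alpha,\xi_\alpha) + u^\alpha \,(\fI\xi_\alpha \reso \xi_\alpha),
\]
and after subtracting $C_\alpha u^\alpha$ the last factor becomes the renormalised product that converges by Assumption~\ref{ass.rsbm}. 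Using $\Delta(u^\alpha \para \fI\xi_\alpha) = u^\alpha \para (\Delta\fI\xi_\alpha) + [\Delta,\para]$ together with $-\Delta\fI\xi_\alpha = \chi(\fD)\xi_\alpha$ one then derives a mild equation for $u^{\alpha,\sharp}$ whose right-hand side lies in $C_T(-1+2\epsilon,\RR^2,e(l+\cdot))$, provided $u^\alpha$ itself lies in $\fL^{1-\epsilon}(\RR^2,e(l+\cdot))$.

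Next I would apply the Schauder estimate of Lemma~\ref{lem.Schauder_est.heat} with index $1+4\epsilon$ to control $u^{\alpha,\sharp}$, and couple it with the weighted paraproduct bound $\|u^\alpha \para \fI\xi_\alpha\|_{\fL^{1-\epsilon}(\RR^2,e(l+\cdot))} \lesssim \|u^\alpha\|_{\fL^{1-\epsilon}} \|\fI\xi_\alpha\|_{1-\epsilon,\RR^2,p(a)}$. Choosing a short time interval---or equivalently inflating the weight parameter $l$---turns this into a contraction and yields the claimed a~priori bound
\[
    \|u\|_{\fL^{1-\epsilon}(\RR^2,e(l+\cdot))} \lesssim \|u_0\|_{1+2\epsilon,\RR^2,e(l)} + \|f\|_{C_T(-1+2\epsilon,\RR^2,e(l+\cdot))}.
\]
The convergence $u^\alpha \to u$ is then automatic: the difference $u^\alpha - u^\beta$ solves an analogous paracontrolled equation driven by $\xi_\alpha - \xi_\beta$, $\fI\xi_\alpha \reso \xi_\alpha - C_\alpha - \fI\xi_\beta\reso\xi_\beta + C_\beta$, $f^\alpha - f^\beta$ and $u_0^\alpha - u_0^\beta$, each of which tends to zero in the relevant weighted norm by Assumption~\ref{ass.rsbm} and by hypothesis.

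The main obstacle is the careful bookkeeping of the exponential weights through every paraproduct, resonant and commutator estimate: localising at scale $\delta$ always costs a factor of the form $e^{\lambda\omega(\delta)}$ (compare Lemma~\ref{lem.est.irregular_term}), which must be absorbed using the growth of $t\mapsto l+t$ and the fact that $e(l)(x)\le e(l+t)(x)$. The estimates of \cite{martin2019paracontrolled} are stated for exactly this class of weights, so this verification is essentially a matter of inspection; once it is done, all steps above are linear and routine.
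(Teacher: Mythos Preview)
Your outline is correct and matches the approach the paper itself defers to: the paper does not actually prove Theorem~\ref{thm.PAM.solution.estimation} but merely states it and refers to \cite{martin2019paracontrolled} for the details, and what you describe---Bony decomposition, the paracontrolled ansatz $u = u\para\fI\xi + u^\sharp$, commutator $C_1$, the Schauder estimate of Lemma~\ref{lem.Schauder_est.heat}, and a contraction using the time-growing weight $e(l+\cdot)$---is precisely the argument carried out there. The only minor slip is the index ``$1+4\epsilon$'' in your application of Lemma~\ref{lem.Schauder_est.heat}; to land $u^\sharp$ in $C^{1+2\epsilon}$ you apply the Schauder estimate with $\alpha = 1+2\epsilon$ (so the right-hand side sits in $C_T(-1+2\epsilon,\RR^2,e(l+\cdot))$), consistent with the paper's Definition of paracontrolled.
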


To solve the nonlinear equation (\ref{equ.PAM+square+phi}) we use a mild formulation: Under suitable regularity and integrability conditions we say that $u$ solves (\ref{equ.PAM+square+phi}) if
\[
    u = \mathcal T_t u_0 + \int_0^t \mathcal T_{t-s} \left(-\frac\kappa2 u_s^2 + \phi\right)ds.
\]
Our proof of existence and uniqueness of mild solutions uses similar arguments as Proposition 4.5 in \cite{perkowski2021rough}.

\begin{theorem}
    Let $T > 0$ and $\epsilon\in\left(0,\frac{1}{4}\right)$, and let $l_0 < -T$. For a non-negative function $\varphi_0 \in C^{1+2\epsilon}(\RR^2,e(l_0))$ and a non-negative function $\phi \in C^{-1+2\epsilon}(\RR^2,e(l_0))$, the solution of equation \eqref{equ.PAM+square+phi} exists and is unique in the space $\fL^{1-\epsilon}_T(\RR^2, e(l))$ with $l = l_0+T$. Moreover, the solution is non-negative.
\end{theorem}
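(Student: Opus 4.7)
I would prove this via a Picard argument on the mild formulation
\[
    \Phi(u)(t) = \mathcal T_t \varphi_0 + \int_0^t \mathcal T_{t-s}\left(-\tfrac{\kappa}{2}u_s^2 + \phi\right)ds,
\]
combined with a mollification argument for non-negativity. Working in the space $X_T := \fL^{1-\epsilon}_T(\RR^2, e(l_0 + \cdot))$ with time-dependent weight (so that $\theta_T = e(l)$ and $X_T \hookrightarrow \fL^{1-\epsilon}_T(\RR^2, e(l))$), Theorem \ref{thm.PAM.solution.estimation} applied with $f = -\tfrac{\kappa}{2}u^2 + \phi$ yields
\[
    \|\Phi(u)\|_{X_T} \lesssim \|\varphi_0\|_{1+2\epsilon, \RR^2, e(l_0)} + \|\phi\|_{-1+2\epsilon, \RR^2, e(l_0)} + \tfrac{\kappa}{2}\|u^2\|_{C_T(-1+2\epsilon, \RR^2, e(l_0 + \cdot))}.
\]

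The quadratic term is tamed as follows. Since $1-\epsilon > 0$, the pointwise product is bilinearly bounded on the weighted H\"older scale, giving $\|u_s^2\|_{1-\epsilon, e(2(l_0+s))} \lesssim \|u_s\|_{1-\epsilon, e(l_0+s)}^2$. Because $l_0 + s < l_0 + T < 0$ for all $s \in [0,T]$, we have $e(2(l_0+s)) \leq e(l_0+s)$ pointwise, giving the embedding $C^{1-\epsilon}(\RR^2, e(2(l_0+s))) \hookrightarrow C^{-1+2\epsilon}(\RR^2, e(l_0+s))$, so $u^2$ reenters the space of admissible forcings. Factorizing $u_1^2 - u_2^2 = (u_1 + u_2)(u_1 - u_2)$ and exploiting the small-time gain in the Schauder estimate (as in the proof of Proposition 4.5 of~\cite{perkowski2021rough}), $\Phi$ is a strict contraction on a ball in $X_T$ for $T$ small, yielding local existence and uniqueness.

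For non-negativity and for extending to arbitrary $T>0$, I would mollify: take smooth $\xi^\alpha \to \xi$ in the sense of Assumption~\ref{ass.rsbm} and solve the classical semilinear PDE
\[
    \partial_t u^\alpha = (\Delta + \xi^\alpha - C_\alpha)u^\alpha - \tfrac{\kappa}{2}(u^\alpha)^2 + \phi, \qquad u^\alpha(0) = \varphi_0.
\]
Since $\xi^\alpha - C_\alpha$ is bounded for each $\alpha$ and $\varphi_0, \phi \geq 0$, the parabolic maximum principle gives $u^\alpha \geq 0$, while the dissipative nonlinearity $-\tfrac{\kappa}{2}(u^\alpha)^2$ (combined with comparison against the linear equation) ensures global existence on $[0,T]$. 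The interior estimate of Theorem~\ref{thm.local_est.renormalized.PAM+square}, or more conveniently the weighted Corollary~\ref{cor.local_est_weighted.renormalized.PAM+square}, provides an $\alpha$-uniform bound for $u^\alpha$ in a weighted supremum norm (with $\phi$ absorbed into the auxiliary $g$ of Lemma~\ref{lem.general_equ}, since $\phi$ is bounded and compactly supported). Passing to the limit using the stability afforded by Theorem~\ref{thm.PAM.solution.estimation}, one obtains a non-negative mild solution $u \in X_T$, which by local uniqueness coincides with the fixed point above and extends to all of $[0,T]$ by iterating the local contraction with the a priori bound.

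The principal obstacle is the weight bookkeeping: squaring doubles the exponent of $e(\cdot)$, so the fixed point only closes because $l_0 + t < 0$ throughout $[0,T]$ forces $e(2(l_0+t))$ to dominate $e(l_0+t)$. A secondary point is that Theorem~\ref{thm.local_est.renormalized.PAM+square} was stated for the unforced equation, but incorporating the bounded, compactly supported $\phi$ only adds a harmless $\sqrt{\|\phi\|_\infty}$ term in Lemma~\ref{lem.general_equ}, which does not alter the iteration.
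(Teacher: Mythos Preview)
Your approach differs from the paper's. The paper does not run a Picard iteration on the nonlinear map $\Phi$; instead it introduces the frozen \emph{linear} map $\fK(\psi) = g$, where $g$ solves $\partial_t g = (\fH - \tfrac{\kappa}{2}\psi)g + \phi$ with $g(0) = \varphi_0$. Theorem~\ref{thm.PAM.solution.estimation} together with Gronwall's inequality gives directly $\|\fK(\psi)\|_{\fL^{1-\epsilon}_T} \lesssim (\|\varphi_0\| + \|\phi\|)\exp(C\|\psi\|)$, and the comparison principle for the singular equation (citing \cite{cannizzaro2017malliavin,perkowski2020spdes}) yields $0 \le \fK(\psi) \le T_t\varphi_0 + \int_0^t T_{t-s}\phi\,ds =: \varphi^0$. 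These two facts give a uniform bound on the iterates $\varphi^m = \fK(\varphi^{m-1})$, and existence follows from Schauder's fixed point theorem on a suitable compact convex set. No interior estimates and no mollification are used here.

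Your global extension step has a gap. Theorem~\ref{thm.local_est.renormalized.PAM+square} and Corollary~\ref{cor.local_est_weighted.renormalized.PAM+square} are stated for \emph{zero} initial data --- this enters in Lemma~\ref{lem.general_equ}, where $u\eta$ must vanish on the entire parabolic boundary --- while your mollified solutions have $u^\alpha(0) = \varphi_0 \neq 0$. Your absorption of $\phi$ into the $g$ of Lemma~\ref{lem.general_equ} also assumes $\phi$ is bounded and compactly supported, which the theorem does not assume. Both points are repairable (Lemma~\ref{lem.general_equ} adapts to non-negative initial data at the cost of an extra $\|\varphi_0\|_\infty$ term, and bounded $\phi$ suffices for the application), but more to the point: the comparison $0 \le u^\alpha \le v^\alpha$ with $v^\alpha$ the linear PAM solution, which you mention only in passing, already furnishes the $\alpha$-uniform bound and renders the detour through the interior estimates unnecessary at this stage. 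That is precisely the route the paper takes.
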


\begin{proof}
    We define the map $\fK(\psi) = g$, where $g$ is the solution to 
    \begin{equation}\label{equ.variant_PAM}
        \left\{
        \begin{array}{cc}
             \partial_t g = (\fH - \frac{\kappa}{2}\psi)g + \phi & \text{ in } \RR_+\times \RR^2,  \\
             g(0) = \varphi_0 & \text{ in }\{0\}\times \RR^2. 
        \end{array}
        \right.
    \end{equation}
     We have a similar estimate as in \cite{perkowski2021rough}:
    \begin{equation}\label{equ.est.PAM+square+phi_prior}
    \begin{aligned}
        \|\fK{\psi}&\|_{\fL^{1-\epsilon}_T(\RR^2,e(l_0+\cdot))}\\\lesssim 
        &(\|\varphi_0\|_{1+2\epsilon,\RR^2,e(l_0+\cdot)} + \|\phi\|_{C_T(-1+2\epsilon,\RR^2,e(l_0+\cdot))})e^{C\|\psi\|_{C_T(\RR^2,e(l_0+\cdot))}}.
    \end{aligned}
    \end{equation}
    Indeed, starting with an estimate of the uniform norm, we have 
    \begin{equation*}
        \begin{aligned}
            \|g\|_{C_T(\RR^2,e(l_0+\cdot))} \lesssim& \|\varphi_0\|_{1+2\epsilon,\RR^2,e(l_0+\cdot)} + \|\phi\|_{C_T(-1+2\epsilon,\RR^2,e(l_0+\cdot))} \\
            &+ \frac{\kappa}{2}\sup_{t\in[0,T]}\left\{\int_0^t\|\fT_{t-s}\psi_sg_s\|_{\RR^2,e(l_0+t)}ds\right\}.
        \end{aligned}
    \end{equation*}
    If $l_0 < -T$, we are able to estimate 
    \[
        \|\fT_{t-s}\psi_sg_s\|_{\RR^2,e(l_0+t)} \lesssim \|\psi\|_{\RR^2,e(l_0+\cdot)}\|g\|_{\RR^2,e(l_0+\cdot)}.
    \]
    Hence, Gronwall's inequality gives 
    \[
         \begin{aligned}
        \|g&\|_{C_T(\RR^2,e(l_0+\cdot))} \lesssim (\|\varphi_0\|_{1+2\epsilon,\RR^2,e(l_0+\cdot)} + \|\phi\|_{C_T(-1+2\epsilon,\RR^2,e(l_0+\cdot))})e^{C\|\psi\|_{C_T(\RR^2,e(l_0+\cdot))}}.
    \end{aligned}
    \]
    Use again the estimate from theorem \ref{thm.PAM.solution.estimation}, we now obtain 
    \begin{equation*}
        \begin{aligned}
            \|g\|_{\fL^{1-\epsilon}_T(\RR^2,e(l_0+\cdot))} \lesssim& \|\varphi_0\|_{1+2\epsilon,\RR^2,e(l_0+\cdot)} + \|\phi-\frac{\kappa}{2}\psi g\|_{C_T(-1+2\epsilon,\RR^2,e(l_0+\cdot))},
        \end{aligned}
    \end{equation*}
    which gives equation \eqref{equ.est.PAM+square+phi_prior}.
    Furthermore, the maximum principle and the comparison principle \cite{cannizzaro2017malliavin, perkowski2020spdes} show that $T_t\varphi_0 + \int_0^tT_{t-s}\phi ds\geq \fK(\psi)(t) \geq0$. Thus, we start with $\varphi^0 = T_t\varphi_0 + \int_0^tT_{t-s}\phi ds$ and then let $\varphi^m = \fK{\varphi^{m-1}}$. We  have the bounds 
    \[
        \|\fK{\psi}\|_{\fL^{1-\epsilon}(\RR^2,e(l_0+\cdot))}\lesssim C(\varphi_0,\phi)e^{C\|T_t\varphi_0 + \int_0^tT_{t-s}\phi ds\|_{C_T(\RR^2,e(l_0+\cdot))}}.
    \]
    To show the existence of the fixed point, we are going to use Schauder's fixed point theorem. To start with, we consider the convex set
    \[
    \mathcal{E} := \{\fK(\psi):\psi \in \fL_T^{1-\epsilon}\}.
    \]
    Then $\fK(\mathcal{E}) \subset \mathcal{E}$ and $\fK(\mathcal{E})$ is a pre-compact set in $\fL_T^{1-\epsilon}$. It remains shown $\mathcal{E}$ and $\fK(\mathcal{E})$ are closed. Taking $\psi_1,\psi_2$ and consider $g_1 = \fK(\psi_1),g_2 = \fK(\psi_2)$, we have the equation 
    \[
        \partial_t(g_1 - g_2) = \left(\fH - \frac{\kappa}{2}\right)(g_1 - g_2) - \frac{\kappa}{2}(\psi_1 - \psi_2)g_2.
    \]
    Then it is obvious that $\mathcal{E}$ and $\fK(\mathcal{E})$ are closed. This then gives us the existence of the fixed point. Hence the existence of the solution to  equation \eqref{equ.PAM+square+phi}.  Uniqueness follows from the same arguments as Proposition 4.5 in \cite{perkowski2021rough}.
\end{proof}

\begin{remark}
    Our existence proof for equation \eqref{equ.PAM+square+phi} only works for $l_0 < -T$. But for proving uniqueness we have to consider the difference of two solutions $g=\varphi^1-\varphi^2$, which solves  equation \eqref{equ.variant_PAM} with initial condition $0$ and $\phi =0$, and for $\psi = \varphi^1 + \varphi^2$. If $\varphi^i \in \fL_T^{1-\epsilon}(\RR^2, p(a))$ for sufficiently small $a>0$, then we can use the same arguments as in \cite{hairer2015simple, martin2019paracontrolled} to show that $g=0$: By giving up a bit of regularity or by introducing a blow up at time 0 we are able to kill the polynomial weight by the Schauder estimates for the heat equation. Thus we are  able to prove the uniqueness to the equation (\ref{equ.PAM+square+phi}) in $\fL_T^{1-\epsilon}(\RR^2, p(a))$ if $a>0$ is sufficiently small.
    
\end{remark}

	Together with corollary \ref{cor.local_est_weighted.renormalized.PAM+square} and the above estimations, we can estimate space-time weighted norms of solution of equation $(\ref{equ.renormalized.PAM+square})$. For each $r > 2$, we consider an auxiliary function $\eta_r$ with
 \begin{equation}\label{def.eta}
    \eta_r = \left\{
        \begin{aligned}
             &1, \quad\quad \text{ in } \fP{r-2}, \\
             &0, \qquad \text{ out side of  } \fP{r-1},\\
             &[0,1], \quad \text{ elsewhere. }
        \end{aligned}
    \right.
\end{equation} 
	\begin{lem}\label{lem.local_est_weighted_spacetime.renormalized.PAM+square}
	    Let $r > 2$ and consider a weight $\theta \in \pmb{\varrho}(\pmb{\omega})$ such that $\theta(x) = \theta(|x|) \geq 1$ and $\theta(|x|)$ is increasing in $|x|$. If $u$ solves equation $(\ref{equ.renormalized.PAM+square})$ in $[0,T]\times \fP{r}$, then we have for $\tilde{\theta} = \theta^{1+\frac{4}{1-\epsilon}}$:
        \begin{equation}
            \|u\eta_{r}\|_{\fL^{1-\epsilon}(\RR^2,\tilde{\theta})} \lesssim 1 + \sum_{\tau = \xi,\fI\xi\xi} [\tau]_{r,|\tau|,\theta}^{1+\frac{4}{n_\tau(1-\epsilon)}}.
        \end{equation}
	\end{lem}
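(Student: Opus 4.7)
The plan is to interpret $v:=u\eta_r$ as a paracontrolled solution on all of $\RR^2$ to a PAM equation with trivial initial data, control the forcing via the interior bounds of Section~4, and close the estimate with the weighted Schauder bound for the Anderson semigroup. By Leibniz's rule for $\Delta$ together with the identity $\eta_r(\xi-C)u = (\xi-C)v$ (valid because $\eta_r \in C_c^\infty$), the function $v$ satisfies
\begin{equation*}
   \partial_t v = \fH v + f, \qquad v(0,\cdot) = 0, \qquad f := -2\nabla u \cdot \nabla \eta_r - u \Delta \eta_r - \tfrac{\kappa}{2}\eta_r u^2.
\end{equation*}
The key observation is that $f$ no longer contains $\xi$: it is built only from derivatives of the smooth cut-off $\eta_r$ (supported in the annulus $\fP{r-1}\setminus\fP{r-2}$) and from classical weighted Hölder norms of $u$ on $\fP{r-1}$. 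Since $u$ is paracontrolled by $\fI\xi$ and $\eta_r$ is smooth with compact support, $v$ inherits the paracontrolled structure up to a smooth commutator, so $\fH v$ is well defined as a distribution.

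The first step is to apply Corollary~\ref{cor.local_est_weighted.renormalized.PAM+square} and Corollary~\ref{cor.est_all} on $\fP{r}$ to obtain
\begin{equation*}
   \|u\|_{C_T(\fP{r-1},\theta^{2/(1-\epsilon)})} + \|u\|_{C_T(1-\epsilon,\fP{r-1},\theta^{4/(1-\epsilon)})} \lesssim 1 + \sum_{\tau\in\mathcal T}\|\tau\|_{r,|\tau|,\theta}^{4/(n_\tau(1-\epsilon))}.
\end{equation*}
Estimating the three contributions to $f$ pointwise, using that $\nabla\eta_r,\Delta\eta_r,\eta_r$ are smooth and compactly supported, yields $f \in C_T(-\epsilon,\RR^2,\theta^{4/(1-\epsilon)})$, which embeds into $C_T(-1+2\epsilon,\RR^2,\theta^{4/(1-\epsilon)})$ under the standing assumption $\epsilon<1/3$. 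The second step is to plug this into the mild formula $v_t = \int_0^t \fT_{t-s}f_s\,ds$ and invoke the weighted Schauder estimate for $\fT$ (Theorem~\ref{thm.PAM.solution.estimation} in its extension to the general class $\pmb{\varrho}(\pmb{\omega})$ from \cite{martin2019paracontrolled}), which delivers
\begin{equation*}
   \|v\|_{\fL^{1-\epsilon}_T(\RR^2,\tilde\theta)} \lesssim (1 + \|\xi\|_{r,-1-\epsilon,\theta} + \|\fI\xi\xi\|_{r,-2\epsilon,\theta})\,\|f\|_{C_T(-1+2\epsilon,\RR^2,\theta^{4/(1-\epsilon)})},
\end{equation*}
with target weight $\tilde\theta = \theta^{1+4/(1-\epsilon)}$. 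The prefactor and the extra $\theta$ in $\tilde\theta$ reflect the dependence of $\fT$ on $\xi$ and on the renormalized resonant product $\fI\xi\odot\xi$.

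Combining the two bounds and simplifying products of the form $\|\xi\|_\theta\cdot\|\tau\|_\theta^{4/(n_\tau(1-\epsilon))}$ via $AB\lesssim A^p+B^q$ with conjugate exponents yields exactly $\|\tau\|_{r,|\tau|,\theta}^{1+4/(n_\tau(1-\epsilon))}$ for $\tau\in\{\xi,\fI\xi\xi\}$; the contribution from $\tau=\xi X$ is absorbed into that of $\xi$ because $\|\xi X\|_{r,-\epsilon,\theta}$ is controlled by $\|\xi\|_{r,-1-\epsilon,\theta}$ up to a harmless polynomial factor in $r$. The main obstacle in this plan is the weight bookkeeping: while the pure heat Schauder estimate (Lemma~\ref{lem.Schauder_est.heat}) preserves weights exactly, passing to the Anderson semigroup $\fT$ costs one extra factor of $\theta$ plus one extra factor of $(1+\|\xi\|_\theta+\|\fI\xi\xi\|_\theta)$, and it is precisely this cost that is responsible for the $+1$ added to the exponent of $\|\tau\|_{r,|\tau|,\theta}$ in the final bound.
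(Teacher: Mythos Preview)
Your overall plan---cut off with $\eta_r$, derive a global equation, and close with a Schauder estimate---matches the paper's, but the choice of Schauder estimate is different and your version has a genuine gap. You absorb the noise into $\fH$ and appeal to a Schauder bound for the Anderson semigroup $\fT$ of the form
\[
   \|v\|_{\fL^{1-\epsilon}_T(\RR^2,\tilde\theta)} \lesssim (1 + \|\xi\|_{r,-1-\epsilon,\theta} + \|\fI\xi\xi\|_{r,-2\epsilon,\theta})\,\|f\|_{C_T(-1+2\epsilon,\RR^2,\theta^{4/(1-\epsilon)})}.
\]
This estimate is not available as stated. Theorem~\ref{thm.PAM.solution.estimation} is formulated only for the exponential family $e(l)$ with the time-shifted weight $e(l+\cdot)$, and its implicit constant depends on the \emph{global} polynomially weighted norms of $\xi$ and $\fI\xi\odot\xi$ coming from Assumption~\ref{ass.rsbm}, not on the local quantities $\|\xi\|_{r,-1-\epsilon,\theta}$, $\|\fI\xi\xi\|_{r,-2\epsilon,\theta}$. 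The semigroup $\fT$ on $\RR^2$ sees the noise everywhere, so there is no mechanism by which the prefactor could be local, and no ``one extra factor of~$\theta$'' rule appears in \cite{martin2019paracontrolled}. One could try to salvage the argument by first replacing $\xi$ by $\tilde\eta\,\xi$ for a second cutoff $\tilde\eta$ equal to $1$ on $\operatorname{supp}\eta_r$ (legitimate since $v$ has support in $\fP{r-1}$) and then re-deriving the paracontrolled Schauder bound for this localized Hamiltonian with the general weight $\theta$; but this is substantial extra work that your write-up does not carry out.

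The paper sidesteps the issue by using the \emph{heat} Schauder estimate (Lemma~\ref{lem.Schauder_est.heat}) instead of the Anderson one. It keeps $(\xi-C)u\eta_r$ on the right-hand side and bounds it directly in $C_T(-1-\epsilon,\fP{r-1},\tilde\theta)$ via Corollary~\ref{cor.renormalized.est}, fed with the interior bounds on $u$, $U$, $\nu$ from Corollary~\ref{cor.est_all}. Because Lemma~\ref{lem.Schauder_est.heat} holds for any $\theta\in\pmb{\varrho}(\pmb{\omega})$ with a constant independent of $\xi$, all noise dependence enters only through the forcing, and since $\eta_r$ is supported in $\fP{r-1}$ that dependence is automatically local. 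The extra factor $\theta$ that takes the weight from $\theta^{4/(1-\epsilon)}$ to $\tilde\theta=\theta^{1+4/(1-\epsilon)}$ comes from the product structure in Corollary~\ref{cor.renormalized.est} (one weight for $u$, one for $\xi$), not from any semigroup mapping property.
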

	\begin{proof}
	    The equation of $u\eta_r$ is given by 
	    \[
	        (\partial_t-\Delta)(u\eta_r) = (\xi-C)u\eta_r - \frac{\kappa}{2}\eta_ru^2 - 2\nabla( u\cdot\nabla\eta_r) + u\Delta\eta_r.
	    \]
	    By Lemma \ref{lem.est.irregular_term} and step 2 in the proof of Theorem \ref{thm.local_est.renormalized.PAM+square}, we get a bound for the H\"older norm,
	    \begin{equation}\label{quantity.upper_bound}
	        \|u(\xi - C)\|_{C_T(-1-\epsilon,\fP{r-1},\tilde{\theta})} \lesssim 1 + \sum_{\tau = \xi,\fI\xi\xi} [\tau]_{r,|\tau|,\theta}^{1+\frac{4}{n_\tau(1-\epsilon)}}.
	    \end{equation}
	    Since $\eta_r$ is supported on $\fP{r-1}$, we know $(\xi-C)u\eta_r \in C_T(-1-\epsilon,\RR^2,\tilde{\theta})$ and has the same bound $(\ref{quantity.upper_bound})$ up to a constant. It is directly from Corollary \ref{cor.est_all} that we have
	    \begin{eqnarray*}
	        &&\|\eta_ru^2\|_{C_T(\RR^2,\tilde{\theta})}, \|\nabla( u\cdot\nabla\eta_r)\|_{C_T(-\epsilon,\RR^2,\tilde{\theta})},\|u\Delta\eta_r\|_{C_T(1-\epsilon,\RR^2,\tilde{\theta})} \\&\lesssim& 	        1 + \sum_{\tau = \xi,\fI\xi\xi} [\tau]_{r,|\tau|,\theta}^{\frac{4}{n_\tau(1-\epsilon)}}.
	    \end{eqnarray*}
	    Thus, the desired result follows from the Schauder estimate Lemma \ref{lem.Schauder_est.heat}.
	\end{proof}
    By approximating equation $(\ref{equ.PAM+square+phi})$ with regularized equations, we  see that the function $u\eta_r$ in the above theorem has a mild solution representation:
    \begin{lem}\label{lem.mild_rep}
        Let $l_0 <-T$ and $\epsilon \in \left(0,\frac{1}{4}\right)$. Suppose $u$ is a solution of equation $(\ref{equ.PAM+square+phi})$ with non-negative $\varphi_0 \in C^{1+2\epsilon}(\RR^2,e(l_0))$ and non-negative $\phi \in C^{-1+2\epsilon}(\RR^2,e(l_0))$. Let $\eta\in C_c^\infty(\RR^2)$ be a compactly supported smooth function. Then
        \begin{equation}
        \begin{aligned}
            (u\eta)(t,x) =& \int_0^t T_{t-s}\left( - \frac{\kappa}{2}\eta u^2 + \phi\eta- 2\nabla (u\cdot\nabla\eta) + u\Delta\eta\right)_s(x)ds \\
            &+ T_t(\varphi_0\eta)(x).
            \end{aligned}
        \end{equation} 
    \end{lem}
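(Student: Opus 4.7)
The plan is to derive the identity first at the regularized level, where all quantities are smooth and the product/chain rules apply classically, and then to pass to the limit using the convergence result of Theorem \ref{thm.PAM.solution.estimation} together with the interior bounds of Corollary \ref{cor.est_all}.

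First I would approximate $\xi$ by smooth $\xi^\alpha$ as in Assumption \ref{ass.rsbm}, and denote by $u^\alpha$ the classical, smooth solution to
\begin{equation*}
    \partial_t u^\alpha = (\Delta + \xi^\alpha - C_\alpha)u^\alpha - \tfrac{\kappa}{2}(u^\alpha)^2 + \phi,\qquad u^\alpha(0)=\varphi_0.
\end{equation*}
By the existence theorem and the a priori bound \eqref{equ.est.PAM+square+phi_prior}, together with the convergence statement of Theorem \ref{thm.PAM.solution.estimation} applied to the fixed-point iteration, we get $u^\alpha\to u$ in $\fL_T^{1-\epsilon}(\RR^2, e(l_0+\cdot))$.

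Next, since $u^\alpha$ is smooth, I would apply the product rule. Using
\begin{equation*}
    \Delta(u^\alpha\eta)=\eta\Delta u^\alpha + 2\nabla u^\alpha\cdot\nabla\eta + u^\alpha\Delta\eta,
\end{equation*}
multiplying the equation for $u^\alpha$ by $\eta$ and rewriting $2\nabla u^\alpha\cdot\nabla\eta = 2\nabla\cdot(u^\alpha\nabla\eta)-2u^\alpha\Delta\eta$ yields
\begin{equation*}
    (\partial_t - \fH^\alpha)(u^\alpha\eta) = -\tfrac{\kappa}{2}\eta(u^\alpha)^2 + \eta\phi - 2\nabla\cdot(u^\alpha\nabla\eta) + u^\alpha\Delta\eta,
\end{equation*}
and Duhamel's formula for the semigroup $\fT^\alpha$ generated by $\fH^\alpha$ gives
\begin{equation*}
    (u^\alpha\eta)(t,x) = \fT^\alpha_t(\varphi_0\eta)(x) + \int_0^t \fT^\alpha_{t-s}\!\left(-\tfrac{\kappa}{2}\eta(u^\alpha)^2 + \eta\phi - 2\nabla\cdot(u^\alpha\nabla\eta) + u^\alpha\Delta\eta\right)\!(s,x)\,ds.
\end{equation*}

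Finally, I would let $\alpha\to 0$ term by term. The initial-data piece $\fT^\alpha_t(\varphi_0\eta)\to\fT_t(\varphi_0\eta)$ since $\varphi_0\eta\in C^{1+2\epsilon}(\RR^2,e(l_0))$ and the semigroup converges on weighted Hölder spaces (a standard consequence of the convergence of the enhanced noise). For the Duhamel integrand, the key input is that $\eta$ is compactly supported in some $\fP{r-2}$: the interior estimates of Corollary \ref{cor.est_all} (applied to $u^\alpha$, which solves the renormalized equation on $\fP{r}$ up to the smooth contributions of $\varphi_0$ and $\phi$) give bounds on $\|u^\alpha\|_{C_T(1-\epsilon,\fP{r-1},\tilde\theta)}$ that are uniform in $\alpha$. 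Combined with the global convergence $u^\alpha\to u$ in $\fL_T^{1-\epsilon}(\RR^2,e(l_0+\cdot))$, this gives convergence of $\eta(u^\alpha)^2$ and $u^\alpha\Delta\eta$ in $C_T(1-\epsilon,\RR^2,\tilde\theta)$ and of $\nabla\cdot(u^\alpha\nabla\eta)$ in $C_T(-\epsilon,\RR^2,\tilde\theta)$. The Schauder estimate of Lemma \ref{lem.Schauder_est.heat}, applied to the difference $\fT^\alpha_{t-s}-\fT_{t-s}$ acting on these locally supported terms, then allows one to exchange $\int_0^t$ with the limit $\alpha\to 0$ and identify the mild representation for $u\eta$.

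The main obstacle is justifying the passage to the limit inside the two ``bad'' terms of the Duhamel integrand: the nonlinearity $\eta(u^\alpha)^2$ and the distributional divergence $\nabla\cdot(u^\alpha\nabla\eta)$. Neither is controlled by the global $\fL^{1-\epsilon}_T(\RR^2,e(l_0+\cdot))$-convergence alone (the weight $e(l_0+\cdot)$ would need to be squared for the product, and $\nabla u$ is only defined as a distribution), and this is precisely where the uniform interior estimates of Corollary \ref{cor.est_all} are used to upgrade to local $C^{1-\epsilon}$-convergence on the support of $\eta$.
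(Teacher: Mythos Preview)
Your overall strategy---regularize the noise, derive the localized identity for $u^\alpha\eta$ classically via the Leibniz rule and Duhamel, then let $\alpha\to 0$---is exactly what the paper does. The difference lies in how the passage to the limit in the Duhamel integrand is justified, and here you have made things harder than necessary.

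The paper does not invoke the interior estimates of Corollary~\ref{cor.est_all}. Because $\eta\in C_c^\infty$, every term of $f^\alpha=-\tfrac{\kappa}{2}\eta(u^\alpha)^2+\phi\eta-2\nabla(u^\alpha\nabla\eta)+u^\alpha\Delta\eta$ is compactly supported in space. On that compact set the convergence $u^\alpha\to u$ in $\fL_T^{1-\epsilon}(\RR^2,e(l_0+\cdot))$ is simply convergence in $C_T C^{1-\epsilon}$, so $\eta(u^\alpha)^2\to\eta u^2$ and $\nabla(u^\alpha\nabla\eta)\to\nabla(u\nabla\eta)$ follow directly from continuity of multiplication and differentiation in H\"older spaces. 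The paper records this by noting that $f^\alpha$ lies in $C_T(-1+2\epsilon,\RR^2,e(2(l_0+\cdot)))$ and then applies Theorem~\ref{thm.PAM.solution.estimation}. Your worry about the weight squaring is harmless: $2l_0<-T$, and in any case the compact support of $\eta$ makes the weight irrelevant.

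Your proposed detour through Corollary~\ref{cor.est_all} also has a gap: that corollary, and indeed all of Section~4, is stated for solutions of~\eqref{equ.renormalized.PAM+square}, which has zero initial data and no forcing. Here $u^\alpha$ solves~\eqref{equ.PAM+square+phi} with $\varphi_0\neq 0$ and $\phi\neq 0$; the interior bounds (in particular Lemma~\ref{lem.general_equ}) rely on $u(0)=0$, so they do not apply as stated. This can likely be repaired, but it is unnecessary given the simpler argument above.
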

\begin{proof}
    Since $u$ is a solution of equation $\eqref{equ.PAM+square+phi}$, it is a limit of $u_\alpha$ with
\begin{equation}\label{equ.renormalized.PAM+square+phi}
        \left\{
        \begin{array}{cc}
            \partial_t u^\alpha  = \fH^\alpha u^\alpha -\frac{\kappa}{2}(u^\alpha)^2 + \phi^\alpha, &  \text{ in }\mathbb{R}_+\times \RR^2,\\
             u^\alpha(0,\cdot) = \varphi^\alpha_0,& \text{ on } \{0\} \times \RR^2,
        \end{array}
    \right.
\end{equation}
as $\alpha\rightarrow 0$, where $\varphi_0^\alpha$ converges to $\varphi_0$ in $C^{1+2\epsilon}(\RR^2,e(l_0))$ and $\phi^\alpha$ converges to $\phi$ in $C^{-1+2\epsilon}(\RR^2,e(l_0))$. Since the regularized equation can be solved classically, the function $u^\alpha\eta$ solves
\begin{equation}
        \left\{
        \begin{array}{ll}
            \partial_t (u^\alpha\eta)  = \fH^\alpha (u^\alpha\eta) -\frac{\kappa}{2}(u^\alpha)^2\eta + \phi\eta - 2\nabla (u^\alpha\cdot\nabla\eta) + u^\alpha\Delta\eta, &  \text{ in }\mathbb{R}_+\times \RR^2,\\
             u^\alpha(0,\cdot)\eta = \varphi^\alpha_0\eta,& \text{ on } \{0\}\times\RR^2.
        \end{array}
    \right.
\end{equation}
Thus, we have 
\[
    u_\alpha\eta = \int_0^t T^\alpha_{t-s}f^\alpha_sds + T_t(\varphi^\alpha_0\eta),
\]
where $f^\alpha = -\frac{\kappa}{2}(u^\alpha)^2\eta + \phi\eta - 2\nabla (u^\alpha\cdot\nabla\eta) + u^\alpha\Delta\eta$. By checking the regularity of $u^\alpha$, we know $f^\alpha$ is of regularity $-1+2\epsilon$ with weight $e(2(l_0+\cdot))$ and $\varphi^\alpha_0\eta$ is of regularity $1+2\epsilon$ with weight $e(l_0)$. Thus we can let $\alpha \rightarrow 0$ and obtain the result.
\end{proof}

We are finally ready to give the proof of our main result, Theorem \ref{thm.rsbm_compact_support_property}:
\begin{proof}[Proof of Theorem \ref{thm.rsbm_compact_support_property}]
As in Section \ref{sec.rsbm}, we consider solutions $\varphi_n^m$ of equation $(\ref{equ.PAM+square+phi})$ with $\phi_n^m$ defined in Section \ref{sec.rsbm}. By lemma \ref{lem.mild_rep} we know that
\begin{equation}
    \varphi_n^m\eta_n  = \int_0^t T_{t-s}(- (\varphi_n^m)^2\eta_n - 2\nabla(\varphi_n^m\cdot\nabla\eta_n) +\varphi_n^m\Delta\eta_n)_sds.
\end{equation}

By lemma \ref{lem.local_est_weighted_spacetime.renormalized.PAM+square} we have for any $a > 0$
\begin{equation}
    \|\varphi_n^m\eta_n\|_{\fL^{1-\epsilon}(\RR^2,p(a))} \lesssim 1.
\end{equation}
By compact embedding $\varphi_n^m\eta_n$ converges (along a sub-sequence) in $\fL^{1-\epsilon'}(\RR^2,p(a'))$ to some $\varphi_n\eta_n$  as $m$ tends to infinity, where $\epsilon'>\epsilon$ and $a'>a$ correspond to an arbitrarily small loss of regularity/weight. From the convergence we get a representation of $\varphi_n\eta_n$:
\begin{equation}
    \varphi_n\eta_n  = \int_0^t T_{t-s}(- (\varphi_n)^2\eta_n - 2\nabla(\varphi_n\cdot\nabla\eta_n) +\varphi_n\Delta\eta_n)_sds.
\end{equation}
The same argument applied to $\varphi_n\eta_n$ shows that $\varphi:=\lim_{n\rightarrow}\varphi_n\eta_n$ actually is a mild solution of the equation (\ref{equ.PAM+square}) with $0$ initial condition. Hence $\varphi$ is identically $0$ and we have, for any compactly supported finite measure $\mu(0)$, that
\[
\lim_{n\rightarrow\infty}\lim_{m\rightarrow\infty}\exp(-\langle \mu(0),\varphi_n^m(t)\rangle) = \lim_{n\rightarrow\infty}\lim_{m\rightarrow\infty}\exp(-\langle\mu(0),\varphi_n^m\eta_n(t)\rangle) = 1.
\]
Thus, the compact support property of rough super Brownian motion holds. 
\end{proof}

\appendix
\section{Proofs of some technical lemmas}\label{app}
Here we give proofs of some technical lemmas. Let first recall the lemma \ref{lem.general_equ}.
	\begin{lem*}
		Let $T>0$ and let $u\in C^\infty$ solve
  	     \begin{equation}
		  \left\{
			\begin{array}{ll}
				(\partial_t - \Delta)u = - u^2 + g & \text{ in } [0,T]\times \fP{n}, \\
				u = 0 & \text{ in } \{0\}\times \fP{n}, \\
				u\geq 0,
			\end{array}
		  \right.
	   \end{equation}
	where $g$ is a smooth and bounded function. Then the following point-wise bound on $u$ holds for all $z = (t,x) \in [0,T)\times \fP{n}$:
		\begin{equation}
			u(t,x) \leq 28\cdot \max\left\{\frac{1}{\min\{(n-x_i)^2,(n+x_i)^2, i = 1,2\}},\sqrt{\|g\|_{C_T \fP{n}}}\right\}.
		\end{equation}
	\end{lem*}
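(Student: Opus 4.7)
The plan is a classical barrier (supersolution) argument that exploits the absorbing quadratic nonlinearity $-u^2$. Fix any $x_0 \in \fP{n}$ and write
\[
r_0 := \min\{n - x_{0,i},\, n + x_{0,i} : i = 1, 2\},
\]
which is the $\ell^\infty$-distance from $x_0$ to $\partial \fP{n}$; the Euclidean ball $B(x_0, r_0)$ is then contained in $\fP{n}$. It will be enough to show
\[
u(t, x_0) \leq \frac{24}{r_0^2} + \sqrt{\|g\|_{C_T \fP{n}}},
\]
since this is bounded by $25 \max\{r_0^{-2},\, \sqrt{\|g\|_{C_T \fP{n}}}\} \leq 28 \max\{r_0^{-2},\, \sqrt{\|g\|_{C_T \fP{n}}}\}$.

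The barrier I would use is the time-independent function
\[
v(x) := \frac{24\, r_0^2}{(r_0^2 - |x - x_0|^2)^2} + \sqrt{\|g\|_{C_T \fP{n}}}, \qquad x \in B(x_0, r_0),
\]
which blows up on $\partial B(x_0, r_0)$. An elementary computation in $d = 2$ shows that its singular part $\tilde v(x) = 24 r_0^2 / (r_0^2-|x-x_0|^2)^2$ satisfies $-\Delta \tilde v + \tilde v^2 \geq 0$ pointwise on $B(x_0, r_0)$, the prefactor $24$ being chosen precisely so that the $\tilde v^2$-term beats the Laplacian term uniformly over the ball. Adding the constant $\sqrt{\|g\|_{C_T \fP{n}}}$ then yields $-\Delta v + v^2 \geq \|g\|_{C_T \fP{n}} \geq g$, so $v$ is a classical stationary supersolution of \eqref{equ.general} on $[0, T] \times B(x_0, r_0)$.

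The conclusion will follow from the nonlinear parabolic comparison principle. Since $u$ is continuous on the compact cylinder $[0, T] \times \overline{B(x_0, r_0)}$ it is bounded there by some $M < \infty$, and by the blow-up of $\tilde v$ the sublevel set $\{v \leq M\} \cap B(x_0, r_0)$ is a smaller closed ball $\overline{B(x_0, \rho)}$ with $\rho < r_0$. On the parabolic boundary of $[0, T] \times B(x_0, \rho)$ one has $u \leq v$: at $t = 0$ because $u(0, \cdot) = 0 \leq v$, and on $[0,T]\times \partial B(x_0, \rho)$ because $v \equiv M \geq u$ there. If $w := u - v$ achieved a positive maximum at some interior point $(t^\ast, x^\ast)$, then $(\partial_t - \Delta) w \geq 0$ at that point, whereas the two PDEs give
\[
(\partial_t - \Delta) w \leq -u^2 + v^2 = -(u + v)(u - v) < 0,
\]
a contradiction since $u + v > 0$ and $w(t^\ast, x^\ast) > 0$. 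Hence $w \leq 0$ throughout, and evaluating at $x_0$ yields $u(t, x_0) \leq v(x_0) = 24/r_0^2 + \sqrt{\|g\|_{C_T \fP{n}}}$.

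The main obstacle is making the comparison step rigorous despite the singularity of $v$ at $\partial B(x_0, r_0)$; the key observation is that we are free to use $u \leq M$ on the smaller (and non-empty) ball $\overline{B(x_0, \rho)}$, on whose lateral boundary $v$ already majorises $u$ precisely because $\tilde v$ blows up. Everything else is bookkeeping: the explicit two-dimensional Laplacian computation for the singular barrier, and the standard first-positive-maximum argument that identifies the two terms in $v(x_0) = 24 r_0^{-2} + \sqrt{\|g\|_{C_T \fP{n}}}$ as a single $\max$ up to the constant $28$.
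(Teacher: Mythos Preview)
Your proof is correct and yields the stated bound (in fact with the slightly better constant $25$), but it proceeds along a genuinely different line from the paper's argument.

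The paper does not localize around each point. Instead it chooses a single auxiliary function $\eta\in C^2(\fP{n})$ vanishing on $\partial\fP{n}$ and positive inside, and analyses the maximum of $u\eta$ on $[0,t]\times\fP{n}$. At an interior maximum $z_t$ the first- and second-order conditions give, after a short computation,
\[
u(z_t)\le \frac{g}{u}(z_t)+\frac{\Delta\tilde\eta}{\tilde\eta}(z_t),\qquad \tilde\eta:=\frac{1}{\eta}.
\]
Choosing the explicit $\tilde\eta(x)=\sum_{i=1}^2\bigl((n-x_i)^{-2}+(n+x_i)^{-2}\bigr)+\sqrt{\|g\|_{C_T\fP{n}}}$ one checks $\Delta\tilde\eta\le 6\tilde\eta^2$ and $g\le\tilde\eta^2$, so that $u/\tilde\eta\le \tilde\eta/u+6$ at $z_t$; this forces $u/\tilde\eta\le 7$ there and hence everywhere, which gives the claim.

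Your approach is the dual picture: rather than multiplying $u$ by something that vanishes at the boundary, you compare $u$ from above with a stationary barrier $v$ that blows up at the boundary of a Euclidean ball centred at the target point. The explicit radial supersolution $24\,r_0^2/(r_0^2-|x-x_0|^2)^2+\sqrt{\|g\|}$ does satisfy $-\Delta v+v^2\ge \|g\|$ in $d=2$ (the constant $24$ is sharp for this particular profile), and your device of retreating to the sublevel set $\{v\le M\}$ to handle the singular lateral boundary is the standard and correct way to run the comparison. Your argument is perhaps more in the classical PDE tradition and is pointwise/localized; the paper's multiplier trick is closer in spirit to the interior estimates of Moinat--Weber and treats the whole box at once. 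Both exploit exactly the same mechanism, namely that the damping $-u^2$ forces $u$ to be controlled by the inverse square of the distance to the boundary plus $\sqrt{\|g\|}$.
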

	\begin{proof}\label{pr.lem.general_equ}
		We consider a $C^2$ function $\eta$ on $\fP{n}$ such that $\eta = 0$ on $\partial \fP{n}$ and $\eta$ is strictly positive on the interior $(-n,n)^2$. Then $u\eta$ is $0$ on the parabolic boundary of $\RR_+\times \fP{n}$ and non-negative in $\RR_+\times \fP{n}$. 

		Let $\fP{n}^t = [0,t]\times \fP{n}$
        and let $z_t$ be the maximum point of $u\eta$ in the region $\fP{n}^t$. If $z_t \in \partial_p\fP{n}^t$, then $u\eta = 0$ on $\Omega_t$ and $u=0$ in $\mathbb{R}_+\times (-n,n)^2$. Here $\partial_p$ means parabolic boundary. The results holds in this case. Now suppose $z_t \notin \partial_p\fP{n}^t$ and $(u\eta)(z_t) > 0$. Then we have $\nabla(u\eta)(z_t) = 0,\Delta(u\eta)(z_t) \leq 0$ and $\partial_t(u\eta)(z_t) \geq 0$. Hence, we obtain
		\[
			\nabla u(z_t) = - \frac{u\nabla\eta}{\eta}(z_t).
		\]
		Moreover, we have at $z_t$:
		\begin{eqnarray*}
			0 &\leq& (\partial_t - \Delta)(u\eta)(z_t) \\
			&=& \big(\eta(\partial_t - \Delta)u + u(\partial_t-\Delta)\eta -2\nabla u\cdot \nabla \eta\big)(z_t) \\
			&=& \left(\eta (-u^2 + g) + u\left(-\Delta\eta + 2\frac{|\nabla\eta|^2}{\eta}\right)\right)(z_t),
		\end{eqnarray*}
		which yields
		\[
			u(z_t) \leq \frac{g}{u}(z_t) - \frac{ \Delta\eta}{\eta}(z_t) + 2\frac{|\nabla\eta|^2}{\eta^2}(z_t).
		\]
		Denoting $\tilde \eta:= \frac{1}{\eta}$, we obtain the inequality 
		\[
			u(z_t) \leq  \frac{g}{u}(z_t) + \frac{\Delta\tilde \eta}{\tilde \eta}(z_t).
		\]
        Consider now $\tilde \eta = \sum_{i=1}^2\left(\frac{1}{(n-x_i)^2} + \frac{1}{(n+x_i)^2}\right) + \sqrt{\|g\|_{[0,T]\times \mathcal P_n}}$, for which $\eta = \frac{1}{\tilde \eta}$ satisfies the assumptions made above. Moreover, we have $\Delta \tilde \eta \leq 6\tilde \eta^2$ and $g \leq \tilde \eta^2$, and thus
        \[
            \frac{u}{\tilde \eta}(z_t) \leq \frac{\tilde \eta^2}{u\tilde \eta}(z_t) + 6 = \frac{\tilde \eta}{u}(z_t) + 6.
        \]
        The inequality $x \le \frac1x + 6$ for $x \ge 0$ can only be satisfied if $x \le 7$. Otherwise we would get $\frac1x < 7$ and then $x < \frac17 + 6 < 7$, a contradiction. Therefore, using that $z_t$ is the maximum of $\frac{u}{\tilde \eta}$ on $\fP{n}^t$, we get
        \[
            u \le 7 \cdot \tilde \eta \le 28 \cdot \max\left\{\frac{1}{\min\{(n-x_i)^2,(n+x_i)^2, i = 1,2\}},\sqrt{\|g\|_{C_T \fP{n}}}\right\},
        \]
        as claimed.
	\end{proof}

	In order to give a proof of Lemma \ref{lem.Schauder_est.two_variable}, we will need an interior gradient estimate for the heat equation. 
	
	\begin{lem}\label{lem.gradient_est.heat_interior}
	   Suppose $u$ satisfies the equation 
	\begin{equation}\label{equ.heat_0}
		\left\{
			\begin{array}{ll}
				(\partial_t - \Delta)u = 0 & \text{ in } \RR_+\times B(x,L) \\
				u = 0 & \text{ on } \{0\}\times B(x,L) 
			\end{array}
		\right.
	\end{equation}
	for some $x \in \RR^d$. Let $R < \frac{L}{2}$ and $T>0$. Then we have the interior gradient estimate for $T > 0$ for any $f \in C(B(x,L))$,
	\[
	   \|\nabla  u\|_{C_TB(x,R)} \lesssim \frac{1}{L}\|u - f\|_{C_TB(x,L)}
	\]
	uniformly in $T$ and $f$.
	\end{lem}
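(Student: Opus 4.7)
My plan is to prove this interior gradient estimate via Bernstein's technique applied to a carefully chosen auxiliary function. The role of $f$ on the right-hand side is to exploit the fact that, for $f$ constant, $v := u - f$ still solves the heat equation and has the same spatial gradient as $u$; taking the infimum over such $f$ (which is the natural reading of the statement) therefore reduces matters to proving $\|\nabla u\|_{C_T B(x,R)} \lesssim L^{-1}\|u\|_{C_T B(x,L)}$ for any solution $u$ of the heat equation on $B(x,L)$ with zero initial data, and then substituting $u - f$ for $u$ at the end.

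First I would fix a smooth cutoff $\eta$ with $\eta \equiv 1$ on $B(x,R)$ and $\operatorname{supp}(\eta) \subset B(x,(L+R)/2)$; the hypothesis $R < L/2$ ensures this transition occurs over a distance of order $L$, so I may take $\|\nabla \eta\|_\infty$ and $L\|\Delta(\eta^2)\|_\infty$ both of order $L^{-1}$. I would then introduce the Bernstein quantity
\[
w(t,y) := \eta(y)^2 |\nabla u(t,y)|^2 + \frac{A}{L^2}\,u(t,y)^2
\]
for $A > 0$ a large constant to be chosen. Using the identities $(\partial_t - \Delta) |\nabla u|^2 = -2 |D^2 u|^2$ and $(\partial_t - \Delta) u^2 = -2 |\nabla u|^2$, and absorbing the cross term $2\nabla(\eta^2)\cdot\nabla|\nabla u|^2 = 4\eta \sum_i u_i (\nabla\eta\cdot\nabla u_i)$ into a fraction of the negative $-2\eta^2|D^2 u|^2$ via Young's inequality, I obtain
\[
(\partial_t - \Delta) w \leq \bigl(C L^{-2} - 2A L^{-2}\bigr)|\nabla u|^2,
\]
which is non-positive once $A$ is chosen sufficiently large, with the constant $C$ depending only on $\|\eta\|_{C^2}$ and hence uniform in $L$.

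With $(\partial_t - \Delta) w \leq 0$, the parabolic maximum principle gives that the supremum of $w$ over $[0,T]\times B(x,L)$ is attained on the parabolic boundary. The initial condition $u(0,\cdot) \equiv 0$ forces $w \equiv 0$ on $\{0\}\times B(x,L)$; on $[0,T]\times \partial B(x,L)$ the cutoff $\eta$ vanishes, so $w = AL^{-2} u^2 \leq AL^{-2}\|u\|_{C_T B(x,L)}^2$. The same bound therefore holds throughout $[0,T]\times B(x,L)$, and restricting to $B(x,R)$ (where $\eta \equiv 1$) and taking square roots produces $\|\nabla u\|_{C_T B(x,R)} \lesssim L^{-1}\|u\|_{C_T B(x,L)}$; applying this to $u - f$ with $f$ a constant yields the stated inequality. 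I expect the main technical step to be the absorption of the cross term, where one must be careful to choose $A$ so that the resulting constant is truly independent of $L$ and $T$ — this relies crucially on the $L^{-1}$-scaling of $\|\nabla\eta\|_\infty$ that is built into the cutoff thanks to the hypothesis $R<L/2$.
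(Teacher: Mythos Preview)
Your Bernstein argument for the core estimate $\|\nabla u\|_{C_T B(x,R)}\lesssim L^{-1}\|u\|_{C_T B(x,L)}$ is correct and is a genuinely different route from the paper. The paper instead scales to $L=1$, quotes classical interior regularity for $t\ge\tfrac14$, and for small $t$ writes $w=u\eta$ via the heat kernel to get a $C^\alpha$ bound of size $t^{(1-\alpha)/2}$, then iterates the same estimate on $D_h u/|h|^\alpha$ to reach $C^{2\alpha}$ (hence $C^1$) regularity near $t=0$. Your maximum-principle approach is more direct and avoids the bootstrapping; the paper's approach has the advantage that it makes the vanishing of $\nabla u$ as $t\downarrow 0$ explicit, which in your argument is implicitly needed to say $w(0,\cdot)=0$ (in the paper's application everything is smooth, so this is not an issue, but you should at least remark on it).

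There is, however, a real gap in how you pass to general $f$. The lemma is stated for \emph{any} $f\in C(B(x,L))$, not just constants, and your reduction ``substitute $u-f$ for $u$'' fails even for constant $f$: the function $u-f$ no longer has zero initial data, so you cannot invoke the estimate you just proved. The paper's device here is different and simpler: once you have $\|\nabla u\|\lesssim L^{-1}\|u\|_{C_T B(x,L)}$, use $u(0,\cdot)=0$ to write
\[
\|u\|_{C_T B(x,L)}\le \|u-f\|_{C_T B(x,L)}+\|f\|_{B(x,L)}=\|u-f\|_{C_T B(x,L)}+\|u(0,\cdot)-f\|_{B(x,L)}\le 2\|u-f\|_{C_T B(x,L)},
\]
which gives the stated bound for arbitrary continuous $f$ without ever replacing $u$ by $u-f$ in the PDE. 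Adding this one line to your argument closes the gap.
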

    We suspect that this is well-known, but were unable to find a reference.
	\begin{proof}
	    We first take $f=0$ and argue later how to treat general $f$. By scaling, it suffices to consider the case $L=1$ and $R=\frac12$. Classical inner regularity theory for the heat equation gives (for $T \ge \frac14$)
        \[
            \sup_{t \in [\frac14,T]} \|\nabla u(t,\cdot)\|_{B(x,\frac12)} \lesssim \|u\|_{C_T B(x,L)},
        \]
        see for example Theorem~8.4.4 in \cite{krylov1996lectures} or Theorem~IV.4.8 in \cite{lieberman1996second}. These estimates are independent of the initial condition, and we leverage the initial condition $u(0)=0$ to extend the estimate to $t \in [0,\frac14]$.

        In our argument we will consider two different spatial scales, and for that purpose it is convenient to reintroduce $R$ and $L$ in the notation.
	 Let $\eta \in C_c^\infty(\RR^d)$ be such that $\eta = 1$ on $B(x,R)$, $\eta = 0$ outside of $B(x,\frac{R+L}{2})$ and $\|\nabla\eta\|_\infty \leq \frac{4}{L-R}$, $\|\Delta \eta\|_\infty \leq \frac{8}{(L-R)^2}$. Let $w = u\eta$, so that
	    	\begin{equation}
		\left\{
			\begin{array}{ll}
				(\partial_t - \Delta)w = -2\nabla(u\nabla\eta) + u\Delta\eta, \\
				w(0,\cdot) = 0, 
			\end{array}
		\right.
	\end{equation}
	on $\RR_+ \times \RR^d$, and therefore
	\[
	    w(t) = \int_0^t K_{t-s}*(-2\nabla(u(s)\nabla\eta) + u(s)\Delta\eta) ds,
	\]
	where $K$ is the heat kernel. Consider the difference operator $D_h f(x) = f(x+h) - f(x)$, and note that by interpolation for  $\alpha=\frac34$ (the following argument works for any $\alpha \in (\frac12,1)$):  
    \[
        \| D_h \nabla K_{t-s}\|_{L^1} \lesssim ( h \|\nabla^2 K_{t-s}\|_{L^1})^\alpha ( \|\nabla K_{t-s}\|_{L^1})^{1-\alpha} \lesssim h^\alpha (t-s)^{-\alpha} (t-s)^{-\frac{1-\alpha}{2}},
    \]
    which yields
    \begin{align*}
        \left|\int_0^t (D_hK_{t-s})*(-2\nabla(u(s)\nabla\eta))ds \right| & = \left|2\int_0^t(D_h\nabla K_{t-s})*(u\nabla\eta)ds\right| \\
        &\lesssim \frac{|h|^\alpha t^{\frac{1-\alpha}{2}}}{L-R}\|u\|_{C_tB(x,L)}.
    \end{align*}
    Similarly, we have 
    \[
        \left|\int_0^t(D_hK_{t-s})*(u(s)\Delta\eta)\right| \lesssim \frac{|h|^\alpha t^{1-\frac{\alpha}{2}}}{(L-R)^2}\|u\|_{C_tB(x,L)},
    \]
    and therefore
    \[
        \|D_h w(t)\|_\infty \lesssim |h|^\alpha t^{\frac{1-\alpha}{2}}\|u\|_{C_tB(x,L)}\left(\frac{1}{L-R}+\frac{t^{\frac{1}{2}}}{(L-R)^2}\right).
    \]
    Recalling that $w=u\eta$ and that $\eta|_{B(x,R)} \equiv 1$, this means
    \[
        [u(t,\cdot)]_{\alpha,B(x,R)} \lesssim t^{\frac{1-\alpha}{2}}\left(\frac{1}{L-R}+\frac{t^{\frac{1}{2}}}{(L-R)^2}\right) \|u\|_{C_tB(x,L)}.
    \]
    Since we had to take $\alpha<1$ to keep the singularity $(t-s)^{-\frac{1+\alpha}{2}}$ integrable, this is not yet sufficient. Therefore, we iterate the argument. Let now $R_1 \in (R,L)$ and let $|h| \in (0,L-R_1)$. Then $\frac{D_h u}{|h|^\alpha}$ also solves the heat equation (\ref{equ.heat_0}) on $B(x,R_1)$, so by the previous argument
    \[
        \left[\frac{D_h u}{|h|^\alpha}(t,\cdot)\right]_{\alpha,B(x,R)}\lesssim t^{\frac{1-\alpha}{2}}\left(\frac{1}{R_1-R} +\frac{t^{\frac{1}{2}}}{(R_1-R)^2}\right)\left\|\frac{D_h u}{|h|^\alpha}\right\|_{C_tB(x,R_1)}.
    \]  
    Now we take $L = 1$, $R_1 = \frac{3}{4}$ and $R = \frac{1}{2}$, and combine the above inequalities to obtain for any $|h| < L-R_1 = \frac{1}{4}$, 
    \[
         \left[\frac{D_h u}{|h|^\alpha}(t,\cdot)\right]_{\alpha,B(x,\frac{1}{2})}\lesssim t^{1-\alpha}(1 +\sqrt{t})^2\|u\|_{C_tB(x,1)}.
    \]
    By Lemma 5.6 in \cite{roberts1995fully}, we have then (since $\alpha = \frac34 > \frac12$)
    \[
        \|\nabla u(t,\cdot)\|_{B(x,\frac{1}{2})} \lesssim t^{1-\alpha}(1+t)\|u\|_{C_tB(x,1)}
    \]
    Let $T>1$ as an upper time horizon. Now when $t \leq \frac{1}{4}$, we have 
    \begin{equation}\label{equ.gradient_est.heat}
        \|\nabla u(t,\cdot)\|_{B(x,\frac{1}{2})} \lesssim \|u\|_{C_TB(x,1)},
    \end{equation}
    which is the desired result for $f=0$ (recall that we only have to consider $R=\frac12$, $L=1$, and $t \le \frac14$).
    
    For general continuous function $f$, we simply use that $u(0) = 0$ and that $f$ does not depend on time to estimate
    \begin{align*}
        \| u \|_{C_T B(x,L)} & \le \| u - f\|_{C_T B(x,L)} + \|f\|_{B(x,L)} \\
        & = \| u - f\|_{C_T B(x,L)} + \| u(0,\cdot) - f\|_{B(x,L)} \\
        & \le 2 \|u-f\|_{C_T B(x,L)}.
    \end{align*}
	\end{proof}
 
Now we can give the proof of Lemma \ref{lem.Schauder_est.two_variable}. 
\begin{proof}[Proof of Lemma \ref{lem.Schauder_est.two_variable}]\label{pr.lem.Schauder_est.two_variable}
    
	   The proof is very similar to that of Lemma 2.11 in the paper \cite{moinat2020space}. The only change is that we replace the parabolically shrunk region by the spatially shrunk cylinder. We include the proof for completeness.
	   
	   \quad \textit{Step 1.} We claim that for all base points $x$ and scales $\delta,R$ and $L$ with $R\leq \frac{L}{2}$ and such that $B(x,L) \subset D$, it holds that for $0 \leq t < T = kR^2, k \geq 1$
	   \begin{equation}\label{three variable.step1}
	       \begin{aligned}
	           \inf_{l} \|U_\delta(t,x,\cdot) - l\|_{C_TB(x,R)} \lesssim& \frac{kR^2}{L^2}\inf_{l}\|U_\delta(t,x,\cdot) - l\|_{C_TB(x,L)} \\
	        &+ kL^2M_{\{x\},L}^{(1)}\sum_{\beta\in A}\delta^{\beta-2}L^{\kappa-\beta},
	       \end{aligned}
	   \end{equation}
	   where the infimum runs over all spatial affine functions $l(y) = C\cdot (y-x) + c$. To prove this, we define a decomposition $U_\delta = u_{>}+u_{<}$ where $u_{>}$ is the solution to 
	   \[
	    (\partial_t - \Delta)u_>(t,y) = \1_{B^T(x,L)}(\partial_t-\Delta_y)U_\delta(t,x,y),
	   \]
	   with Dirichlet boundary conditions. By standard estimates for the heat equation \cite{moinat2020space} and assumptions in this lemma,
        \begin{eqnarray*}
            \|u_>\|_{C_TB(x,L)} &\lesssim& L^2\|(\partial_t-\Delta_y) U_\delta(t,x,y))\|_{y\in B(x,L)} \\
            &\leq& L^2M_{\{x\},L}^{(1)}\sum_{\beta\in A}\delta^{\beta-2}L^{\kappa-\beta}.
        \end{eqnarray*}
        Now $(\partial_t - \Delta)u_< = 0$ on $B^T(x,L)$ and $u_<(0,\cdot) = 0$ on $B(x,L)$. By Lemma \ref{lem.gradient_est.heat_interior}, we know directly for $\partial \in \{\partial_t,\partial_i\partial_j\}$ a differential operator of order 1 in time and 2 in space, 
        \[
            \|\partial u_<\|_{C_TB(x,R)} \leq L^{-2}\|u_< - l_>\|_{C_TB(x,R)},
        \]
        for any affine function $l_>$ with $R\leq \frac{L}{2}$. Let $l_<(y) = u_<(T,x) + \nabla u_<(T,x)(y-x)$, the Taylor's formula show that 
        \[
            \|u_< - l_<\|_{C_TB(x,R)} \leq kR^2\|Du_<\|_{C_TB(x,R)} \lesssim \frac{kR^2}{L^2}\|u_< - l_>\|_{C_TB(x,L)}.
        \]
        Thus,
        \begin{eqnarray*}
            &&\|U_\delta(t,x,\cdot) - l_<\|_{C_TB(x,R)} \\
            &\leq& \|u_< - l_<\|_{C_TB(x,R)} + \|u_>\|_{C_TB(x,R)} \\
            &\lesssim& \frac{kR^2}{L^2}\|U_\delta(t,x,\cdot) - l_>\|_{C_TB(x,L)} + k\|u_>\|_{C_TB(x,L)}\\
            &\leq& \frac{kR^2}{L^2}\|U_\delta(t,x,\cdot) - l_>\|_{C_TB(x,L)} + kL^2M_{\{x\},L}^{(1)}\sum_{\beta\in A}\delta^{\beta-2}L^{\kappa-\beta},
        \end{eqnarray*}
        which implies equation (\ref{three variable.step1}).
        
        \textit{Step 2.} We claim that for all base point $x$ and scales $\delta$ and $L$, it holds that 
        \begin{equation}\label{three variable.step2}
            \|U_\delta(t,x,\cdot) - U(t,x,\cdot)\|_{C_TB(x,R)} \leq M_{\{x\},R,\delta}^{(2)}\sum_{\beta\in A}R^\beta\delta^{\kappa-\beta} + \delta^\kappa[U]_{C_T(\kappa,B(x,R),\delta)}.
        \end{equation}
        Indeed, by symmetry of $\Psi$, 
        \begin{eqnarray*}
            &&|U_\delta(t,x,y) - U(t,x,y)| \\
            &=& \left|\int \Psi^\delta(y-y_1)(U(t,x,y_1) - U(t,x,y))dy_1\right|\\
            &=& \inf_{\nu(t,y)}\left|\int\Psi^\delta(y-y_1)(U(t,x,y_1) - U(t,x,y) - U(t,y,y_1))\right.\\
            && +\left.\int\Psi^\delta(y-y_1)(U(t,y,y_1) - \nu(t,y)(y_1-y))dy_1\right|\\
            &\leq& M_{\{x\},R,\delta}^{(2)}\sum_{\beta\in A}d(x,y)^\beta\int\Psi^\delta(y-y_1)d(y,y_1)^{\kappa-\beta}dy_1 \\
            &&+(\sup_{y\in B(x,R)}\inf_{\nu(t,y)}\sup_{y_1\in B(y,\delta)} d(y,y_1)^{-\kappa}|U(t,y,y_1)-\nu(t,y)(y_1-y)|)\\
            &&\times \int\Psi^\delta(y-y_1)d(y,y_1)^\kappa dy_1.
        \end{eqnarray*}
	
    \textit{Step 3.} We prove for small enough $\epsilon(T)$, we have
    \begin{equation}\label{three variable.step3}
        \begin{aligned}
            &\sup_{R\leq\frac{\epsilon d}{2}}R^{-\kappa}\inf_l\|U(t,x,\cdot) - l\|_{C_TB(x,R)} \\
        \lesssim_{T,\epsilon}& \sum_{\beta\in A}\left(M_{\{x\},\frac{d}{2}}^{(1)}\epsilon^{-4+2\beta-\kappa} + M_{\{x\},\frac{d}{2},\frac{\epsilon^2d}{2}}^{(2)}\epsilon^{\kappa-\beta}(1+\epsilon^{\kappa-\beta})\right)\\
        &+\epsilon^{2-2\kappa}\frac{d^{-\kappa}}{2^{-\kappa}}\|U(t,x,\cdot)\|_{C_TB(x,\frac{d}{2}(1+\epsilon^2))} +(\epsilon^\kappa+\epsilon^{2+\kappa})[U]_{C_T(\kappa,B(x,\frac{\epsilon d}{2}),\frac{\epsilon^2d}{2})}.
        \end{aligned}
    \end{equation}
    Multiplying equation (\ref{three variable.step1}) by $R^{-\kappa}$ and fixing the length ratios $R = \epsilon L = \epsilon^{-1}\delta$ for some $\epsilon\leq \frac{1}{2}$ to be fixed below, we get for any point $x \in D_d$ and length $L\leq \frac{d}{2}$, 
    \begin{eqnarray*}
        &&R^{-\kappa}\inf_l\|U_\delta(t,x,\cdot)-l\|_{C_TB(x,R)}\\
        &\lesssim&k\epsilon^{2-\kappa} L^{-\kappa}\inf_{l}\|U_\delta(t,x,\cdot) - l\|_{C_TB(x,L)} + kM_{D_d,L}^{(1)}\sum_{\beta\in A}\epsilon^{-4+2\beta-\kappa}.
    \end{eqnarray*}
    Taking the supermum over $L\leq \frac{d}{2}$ while keeping the ratios $R = \epsilon L = \epsilon^{-1}\delta$ fixed, we get 
    \begin{eqnarray*}
        &&\sup_{R\leq\frac{\epsilon d}{2}} R^{-\kappa} \inf_l\|U_\delta(t,x,\cdot) - l\|_{C_TB(x,R)} \\
        &\lesssim& k\epsilon^{2-\kappa}\sup_{L\leq \frac{d}{2}}L^{-\kappa} \inf_l \|U_\delta(t,x,\cdot) - l\|_{C_TB(x,L)} + k\sup_{L\leq \frac{d}{2}}M_{D_d,L}^{(1)}\sum_{\beta \in A}\epsilon^{-4+2\beta-\kappa} \\
        &\leq&  k\epsilon^{2-\kappa}\sup_{L\leq \frac{\epsilon d}{2}}L^{-\kappa} \inf_l \|U_\delta(t,x,\cdot) - l\|_{C_TB(x,L)} + k\sup_{L\leq \frac{d}{2}}M_{D_d,L}^{(1)}\sum_{\beta \in A}\epsilon^{-4+2\beta-\kappa} \\
        &&+ k\epsilon^{2-\kappa}\sup_{\frac{\epsilon d}{2} < L \leq \frac{d}{2}}L^{-\kappa}\inf_l\|U_\delta(t,x,\cdot) - l\|_{C_TB(x,L)}.
    \end{eqnarray*}
    The last term is bounded by 
    \[
        k\epsilon^{2-\kappa}\left(\frac{\epsilon d}{2}\right)^{-\kappa}\|U_\delta(t,x,\cdot)\|_{C_TB(x,\frac{d}{2})} \leq k\epsilon^{2-2\kappa}\frac{2^\kappa}{d^\kappa}|U(t,x,\cdot)|_{C_TB(x,\frac{d}{2}(1+\epsilon^2))}.
    \]
    Hence we have 
    \begin{eqnarray*}
        &&\sup_{R\leq\frac{\epsilon d}{2}} R^{-\kappa} \inf_l\|U_\delta(t,x,\cdot) - l\|_{C_TB(x,R)} \\
        &\lesssim& k\epsilon^{2-\kappa}\sup_{L\leq \frac{\epsilon d}{2}}L^{-\kappa} \inf_l \|U_\delta(t,x,\cdot) - l\|_{C_TB(x,L)} + kM_{D_d,\frac{d}{2}}^{(1)}\sum_{\beta \in A}\epsilon^{-4+2\beta-\kappa} \\
        &&+ k\epsilon^{2-2\kappa}\frac{2^\kappa}{d^\kappa}\|U(t,x,\cdot)\|_{C_TB(x,\frac{d}{2}(1+\epsilon^2))}.
    \end{eqnarray*}
    Applying equation $(\ref{three variable.step2})$, we obtain
    \begin{eqnarray*}
        &&\sup_{R\leq \frac{\epsilon d}{2}} R^{-\kappa}\inf_l\|U(t,x,\cdot) - l\|_{C_TB(x,R)} \\
        &\lesssim& \sum_{\beta\in A}\left(kM_{D_d,\frac{d}{2}}\epsilon^{4-2\beta-\kappa} + M_{\{x\},\frac{\epsilon d}{2},\frac{\epsilon^2 d}{2}}\epsilon^{\kappa-\beta}\right) + \epsilon^\kappa[U]_{C_T(\kappa,B(x,\frac{\epsilon d}{2}),\frac{\epsilon^2 d}{2})}\\
        &&+ k\epsilon^{2-2\kappa}\frac{2^\kappa}{d^{\kappa}}\|U(t,x,\cdot)\|_{C_TB(x,\frac{d}{2}(1+\epsilon^2))}\\
        &&+k\epsilon^{2-\kappa}\sup_{L\leq\frac{\epsilon d}{2}}L^{-\kappa}\inf_l\|U_\delta(t,x,\cdot) - l\|_{C_TB(x,L)}\\
        &\lesssim_{T}& \sum_{\beta\in A}\left(M_{\{x\},\frac{d}{2}}^{(1)}\epsilon^{-4+2\beta-\kappa} + M_{\{x\},\frac{d}{2},\frac{\epsilon^2d}{2}}^{(2)}\epsilon^{\kappa-\beta}(1+\epsilon^{\kappa-\beta})\right)\\
        &&+\epsilon^{2-2\kappa}\frac{d^{-\kappa}}{2^{-\kappa}}\|U(t,x,\cdot)\|_{C_TB(x,\frac{d}{2}(1+\epsilon^2))} +(\epsilon^\kappa+\epsilon^{2+\kappa})[U]_{C_T(\kappa,B(x,\frac{\epsilon d}{2}),\frac{\epsilon^2d}{2})} \\
        &&+\epsilon^{2-\kappa}\sup_{L\leq \frac{\epsilon d}{2}}L^{-\kappa}\inf_l\|U(t,x,\cdot) - l\|_{C_TB(x,L)}.
    \end{eqnarray*}
    The last term of the right-hand side can be absorbed in the left-hand side when $\epsilon$ is small enough. Thus we obtain the inequality $(\ref{three variable.step3})$.
    
    \textit{Step 4.} We prove that 
    \begin{equation}\label{three variable.step4}
        \begin{aligned}
            &\sup_{d\leq d_0} d^{\kappa}[U]_{C_T(\kappa,D_d,d)} \\
            &\lesssim \sup_{d\leq d_0}\sum_{\beta \in A}\left(M_{D_d,\frac{d}{2}}^{(1)}\epsilon^{-4+2\beta-\kappa} + M_{D_d,\frac{d}{2},\frac{\epsilon^2d}{2}}\epsilon^{\kappa-\beta}\right)+\epsilon^{2-2\kappa}\sup_{d\leq d_0}\|U\|_{C_T(D_d,d)}.
        \end{aligned}
    \end{equation}
    we first argue that we can change the order of the supremum and the infimum in $\sup_{R\leq \frac{\epsilon d}{2}} R^{-\kappa}\inf_l\|U(t,x,\cdot) - l\|_{C_TB(x,R)}$. To begin with, since $U(t,x,x) = 0$, we have 
    \begin{eqnarray*}
        \|U(t,x,\cdot) - (l-l(x))\|_{C_TB(x,R)} &\leq& \|U(t,x,\cdot) - l\|_{C_TB(x,R)} + |l(x)| \\
        &\leq& 2\|U(t,x,\cdot) - l\|_{C_TB(x,R)}.
    \end{eqnarray*}
    Hence 
    \[
        \sup_{R\leq \frac{\epsilon d}{2}} R^{-\kappa}\inf_{l(x) = 0}\|U(t,x,\cdot) - l\|_{C_TB(x,R)} \lesssim \sup_{R\leq \frac{\epsilon d}{2}} R^{-\kappa}\inf_l\|U(t,x,\cdot) - l\|_{C_TB(x,R)}.
    \]
    Furthermore, for $\frac{\epsilon d}{2}>R >0$, let $l_R = C_R(y-x)$ such that 
    \[
        \|U(t,x,\cdot) - l_R\|_{C_TB(x,R)} \leq 2\inf_l\|U(x,\cdot) - l\|_{C_TB(x,R)}.
    \]
    Then we have 
    \[
        |C_R - C_{\frac{R}{2}}|R^{-(\kappa - 1)} \lesssim \sup_{R\leq\frac{\epsilon d}{2}}R^{-\kappa}\inf_l\|U(t,x,\cdot) - l\|_{C_TB(x,R)}.
    \]
    This shows that there exists a limit $C_0:=\lim_{R\rightarrow 0}C_R$ and we have the bound 
    \[
        |C_R-C_0|R^{-(\kappa-1)} \lesssim_\kappa  \sup_{R\leq\frac{\epsilon d}{2}}R^{-\kappa}\inf_l\|U(t,x,\cdot) - l\|_{C_TB(x,R)}.
    \]
    Now consider $\bar{l} = C_0(y-x)$, we have 
    \[
        R^{-\kappa}\|U(t,x,\cdot) - \bar{l}\|_{C_TB(x,R)} \lesssim_\kappa  \sup_{R\leq\frac{\epsilon d}{2}}R^{-\kappa}\inf_l\|U(t,x,\cdot) - l\|_{C_TB(x,R)}.
    \]
    Thus for any $0\leq t\leq T$,
    \begin{eqnarray*}
        &&\inf_{\nu(t,x)}\sup_{x\neq y\in B(x,\frac{\epsilon d}{2})} d(x,y)^{-\kappa}|U(t,x,y) - \nu(t,x)(y-x)| \\
        &\leq& \inf_{l(x)=0}\sup_{R\leq \frac{\epsilon d}{2}} R^{-\kappa}\|U(t,x,\cdot) - l\|_{C_TB(x,R)}\\
        &\lesssim& \sup_{R\leq \frac{\epsilon d}{2}} R^{-\kappa}\inf_l\|U(t,x,\cdot) - l\|_{C_TB(x,R)}.
    \end{eqnarray*}
    Therefore, using the inequality (\ref{three variable.step3}),if we take the supremum over $t\in[0,T]$ and then take supremum over $x \in D_d$, multiply it by $d^\kappa$ and take the supremum over $d$, we will have 
    \begin{eqnarray*}
        &&\sup_{d\leq d_0}d^{\kappa}\sup_{x \in D_d} \sup_{0\leq t\leq T}\inf_{\nu(t,x)}\sup_{x\neq y\in B(x,\frac{\epsilon d}{2})} d(x,y)^{-\kappa}|U(t,x,y) - \nu(t,x)(y-x)| \\
        &\lesssim& \sup_{d\leq d_0}d^{\kappa}\sum_{\beta\in A}\left(M_{D_d,\frac{d}{2}}^{(1)}\epsilon^{-4+2\beta-\kappa} + M_{D_d,\frac{d}{2},\frac{\epsilon^2 d}{2}}^{(2)}\epsilon^{\kappa-\beta}\right) \\
        &&+ \epsilon^{2-2\kappa}\sup_{d\leq d_0}\|U\|_{C_T(D_d,d)}
        +\epsilon^\kappa\sup_{d\leq d_0}d^\kappa\sup_{x\in D_d}\sup_{y\in B(x,\frac{\epsilon d}{2})}\sup_{0\leq t\leq T}\\
        &&\left[\inf_{\nu(t,y)}\sup_{y\neq y_1\in B(y,\frac{\epsilon^2d}{2})}d(y,y_1)^{-\kappa}|U(y,y_1) - \nu(t,y)(y_1-y)|\right].
    \end{eqnarray*}
    The last term can be absorbed into the left-hand side for $\epsilon$ small enough since for $y \in B(x,\frac{\epsilon d}{2})$, we have $y \in D_{(1-\frac{\epsilon}{2})d}$ with $(1-\frac{\epsilon}{2}) \leq d_0$. And $\frac{\epsilon^2d}{2} \leq \frac{\epsilon}{2}(1-\frac{\epsilon}{2})d$ for small enough $\epsilon$. Hence we have 
    \begin{eqnarray*}
        \sup_{d\leq d_0}d^\kappa [U]_{C_T(\kappa,D_d,\frac{\epsilon d}{2})} &\lesssim_{T,\kappa}&\sup_{d\leq d_0}\sum_{\beta \in A}\left(M_{D_d,\frac{d}{2}}^{(1)}\epsilon^{-4+2\beta-\kappa} + M_{D_d,\frac{d}{2},\frac{\epsilon^2d}{2}}\epsilon^{\kappa-\beta}\right)\\
        &&+\epsilon^{2-2\kappa}\sup_{d\leq d_0}\|U\|_{C_T(D_d,d)}.
    \end{eqnarray*}
    Then it is directly to extend $[U]_{C_T(\kappa,D_d,\frac{\epsilon d}{2})}$ to $[U]_{C_T(\kappa,D_d)}$ by considering $\leq\frac{\epsilon d}{2}$ and $>\frac{\epsilon d}{2}$ parts.
\end{proof}

    \bibliographystyle{amsalpha}
	\bibliography{RSBM}
\end{document}